\newtheorem{theorem}{Theorem}[section]
\newtheorem{lemma}[theorem]{Lemma}
\newtheorem{definition}[theorem]{Definition}
\newtheorem{remark}[theorem]{Remark}
\newtheorem{corollary}[theorem]{Corollary}
\newcommand{\at}[2][]{#1|_{#2}}
\newcommand{\dom}[1]{\mathcal{D}\left(#1\right)}
\newcommand{\Gammamin}{\Gamma_m}
\newcommand{\xmin}{x_m}
\DeclareMathOperator*{\esssup}{ess\,sup}
\begin{document}

\begin{titlepage}
   \begin{center}
   \uppercase{
       \vspace*{1cm}
       
        \Large
       \textbf{On the properties of the semigroup generated by the RL fractional integral}

       \vspace{0.5cm}

       \large

       A research report submitted to the Scientific Committee of the Hang Lung Mathematics Award
            
       \vspace{1.5cm}

       \normalsize

       \textbf{Author Name}
       
       Ethan Jon Yi Soh and Kyan Ka Hin Cheung
       
       \vspace{1.0cm}

       \textbf{Team number}

       2340158
       
       \vspace{1.0cm}

       \textbf{Teacher}
       
       Mr. Thomas Johnson
       
       \vspace{1.0cm}

       \textbf{School}
       
       Harrow International School Hong Kong
       
       \vspace{1.0cm}

       August 2023
       
    }
   \end{center}
\end{titlepage}

\begin{abstract}
    For operators $A$, it is sometimes possible to define $e^{At}$ as an operator in and of itself provided it meets certain regularity conditions. Like $e^{\lambda x}$ for ODEs, this operator is useful for solving PDEs involving the operator A. We call the set of $e^{At}$ a semigroup generated by $A$. In this paper, we discuss the properties of semigroups generated by the fractional integral, an operator appearing in PDEs in increasingly many fields, over Bochner-Lebesgue spaces.
\end{abstract}

\tableofcontents

\section{Introduction}

The fractional integral is an extension to the ordinary integrals to a non-integer order and has numerous applications in modelling various phenomena such as viscoelasticity, fractionally-damped systems, and diffusion. \cite{kilbas2006} A prominent definition for the fractional integral is the Riemann-Liouville integral which can be derived from the Grünwald-Letnikov fractional derivative or the Cauchy formula for repeated integration and is defined, for order $\alpha$, by:

$$_{x_0} J^\alpha_x f(x) = \frac{1}{\Gamma(\alpha)}\int^{x}_{x_0}f(t)(x-t)^{\alpha-1}dt$$

where $f$ is a function which maps the interval $[x_0, x_1]$ to a Banach Space $X$ (which can be the real numbers, Euclidean vectors or even $L^p$ functions), and $x_0\leq x\leq x_1$. \cite[Definition 5]{carvalhoneto2021}

Semigroups are mappings from positive reals to $\mathscr{L}(F)$, the set of all continuous linear operators in $F \rightarrow F$. For a given one-parameter semigroup $T$, they satisfy the properties $T(t+s) = T(t)T(s)$ and $T(0) = I$, where $I$ is the identity operator on $F$. \cite[Definition 2.1]{isem15} These two properties can allow one to reveal a lot of information about the given semigroup $T(t)$. The semigroups can be further categorised into different types of semigroups, such as $C_0$-semigroups and analytic semigroups, depending on the properties they possess. The infinitesimal generator, $A$, of a one-parameter semigroup, $T$,  is defined to have domain $\dom{A} \coloneqq \{f\in F \mid T(\cdot)f \text{ is differentiable in} \left[0,\infty\right) \}$. Furthermore, if $f\in \dom{A}$, then:

$$Af\coloneqq \frac{\mathrm{d}}{\mathrm{d}t}T(t)f\at[\bigg]{t=0} = \lim_{h\to 0^+}\frac{T(h)f - f}{h}$$

This property is highly useful, as by treating a function $f(x, t)$ as a time-varying vector $f(t)$, it allows one to investigate partial fractional differential equations through the lens of $C_0$-semigroups. \cite{isem15} For example, for an operator $A$, the solution to the abstract Cauchy problem

$$
\left\{\begin{matrix}\dot{f}(t)=Af(t) \\ f(0)=0\end{matrix}\right.
$$

Is known to be

$$f(t)=T(t)f(0)$$

where $T$ is the semigroup generated by $A$.
It is also known that a linear operator is the infinitesimal generator of a uniformly continuous semigroup, a type of one-parameter semigroup, if and only if the operator is also bounded. Normally, derivatives appear in a PDE, but to ensure the well-behavedness of the semigroup generated, we instead use the fractional integral operator, $_{x_0}J^\alpha_x$, as the infinitesimal generator of a unique one-parameter semigroup, which we intend to study in this paper.

In this paper, we will use the theory of one-parameter semigroups in order to separately determine the properties of the semigroup generated by the fractional integral. We use the theory on Bochner-Lebesgue spaces $L^p(x_0, x_1;X)$ to investigate semigroups generated by Riemann-Liouville fractional integrals, namely its boundedness and well-behavedness.

\newpage

\section{Preliminaries of Bochner-Lebesgue Spaces and \texorpdfstring{$\mathbf{C_0}$}{C0}-semigroups}

In this section, we will present some well known definitions and results regarding the classical Bochner-Lebesgue spaces, one-parameter semigroups and the Riemann-Liouville fractional integral, which will be of utter importance throughout our study and analysis of the semigroup generated by the fractional integral. 

\begin{definition} \label{bochnerdef} \emph{\cite[Definition 1]{carvalhoneto2021}} 
Let $E$ be a subspace in $\mathbb{R}^n$, $M$ be a $\sigma$-algebra and $\mu$ be a measure in $(E,\Sigma)$. The representation $(E, M, \mu)$ is called a \emph{measure space}. Let $X$ be a Banach space over $\mathbb{C}$. Then:

\begin{enumerate}[label=(\roman*)]
    \item A step function $\varphi: E\rightarrow X$, where $X$ is an arbitrary Banach space, is \emph{Bochner measurable} if $\varphi^{-1}(\{s\})\in M,\hspace{1mm} \forall s\in X$. Furthermore, if $\mu(\varphi^{-1}(\{s\}))<\infty$, then the function is also \emph{integrable in $E$}.
    \item A Bochner measurable and integrable step function $\varphi : E \rightarrow X$ is a \emph{simple function} if and only if it can be expressed as a summation\footnote{Where $\chi_{A_j}$ is the indicator function of the set $A_j$, $\{a_j\} \text{ is such that }\forall j \hspace{2mm}a_j\in X$ and $A_j$ is chosen such that $\forall j, \hspace{1mm} A_j\subset E;\hspace{1mm} \forall i\neq j,\hspace{1mm} A_i\cap A_j = \phi \text{ and } \bigcup_{j=1}^n A_j = E.$},
$$\varphi = \sum_{j=0}^{n} a_j \chi_{A_j}$$ 
\textit{and its integral is defined as,}
$$\int_E \varphi\hspace{1mm} d\mu = \sum_{j=1}^n a_j\mu(A_j)$$
    \item \textit{A function $f : E \rightarrow X$ is \emph{Bochner measurable} if there exists a sequence $\{\varphi_n(x)\}_{n=1}^\infty$ of simple functions such that $\varphi_n (x)\rightarrow f(x) \text{ as } n\rightarrow \infty \text{ in the topology of } X$, for almost every $x\in E$.}\par

    \item \textit{A function $f : E \rightarrow X$ is  \emph{Bochner integrable} if there exists a sequence $\{\varphi_n(x)\}_{n=1}^\infty$ of simple functions such that,}
$$\lim_{n\rightarrow \infty}\int_E \|\varphi_n(x) - f(x)\|_X d\mu = 0.$$
\end{enumerate}

\end{definition}

\begin{lemma}\label{bochnerprops} 
We have the following two results from these definitions:
\begin{enumerate}[label=(\roman*)]
    \item Consider $I \subset \mathbb{R}$. If $f: I \rightarrow X$ is a Bochner measurable function and $g: \mathbb{R} \rightarrow \mathbb{R}$ is a Lebesgue measurable function, then we their convolution $\mathbb{R} \times I \ni(t, s) \mapsto g(t-s) f(s)$ is Bochner measurable. 

     \item A function $f: I \rightarrow X$ is Bochner integrable if, and only if, $f$ is Bochner measurable and $\|f\|_X \in$ $L^1(I ; \mu)$. This allows us to introduce the concept of \emph{Bochner-Lebesgue spaces}, as defined below.
\end{enumerate}
\end{lemma}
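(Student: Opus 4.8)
The plan is to treat the two parts separately, since (ii) is the classical Bochner integrability criterion while (i) is a product-measurability statement whose proof leans on (ii)'s machinery. For (ii) I would argue both implications directly from Definition \ref{bochnerdef}, and for (i) I would reduce to an elementary closure property of Bochner measurability under multiplication by scalar-measurable functions.

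For the forward direction of (ii), suppose $f$ is Bochner integrable, so by definition there are simple functions $\varphi_n$ with $\int_E \|\varphi_n - f\|_X\, d\mu \to 0$; Bochner measurability of $f$ is then immediate from the definition. To obtain $\|f\|_X \in L^1(I;\mu)$ I would invoke the reverse triangle inequality $\bigl|\,\|\varphi_n\|_X - \|f\|_X\,\bigr| \le \|\varphi_n - f\|_X$, which shows that the nonnegative integrable functions $\|\varphi_n\|_X$ converge to $\|f\|_X$ in $L^1$, forcing $\|f\|_X$ to lie in $L^1(I;\mu)$.

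The reverse direction is the substantive half. Assuming $f$ is Bochner measurable with $\|f\|_X \in L^1(I;\mu)$, I must manufacture simple functions converging to $f$ in the $L^1$-sense. Starting from simple functions $s_n \to f$ almost everywhere, I would truncate by setting $\varphi_n = s_n \chi_{B_n}$ with $B_n = \{\,\|s_n\|_X \le 2\|f\|_X\,\}$, a measurable set since $\|s_n\|_X$ and $\|f\|_X$ are both measurable. I would then verify three points: first, each $\varphi_n$ is a genuine integrable simple function, because any nonzero value $a$ forces $\|f\|_X \ge \|a\|_X/2$ on the corresponding level set, whose measure is finite by Chebyshev using $\|f\|_X \in L^1$; second, $\varphi_n \to f$ a.e., since on the convergence set the truncation is eventually inactive where $f \ne 0$, while on $\{f=0\}$ one checks $\varphi_n$ vanishes identically; and third, the pointwise domination $\|\varphi_n - f\|_X \le 3\|f\|_X$. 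The Dominated Convergence Theorem then yields $\int_E \|\varphi_n - f\|_X\, d\mu \to 0$, i.e. Bochner integrability.

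For part (i), I would first record the elementary fact that the product of a scalar Lebesgue-measurable function with an $X$-valued Bochner-measurable function is again Bochner measurable, proved by approximating each factor with simple functions and passing to a diagonal sequence. Writing $F(t,s) = f(s)$, which is Bochner measurable on $\mathbb{R}\times I$ by lifting the approximating simple functions of $f$, and $h(t,s) = g(t-s)$, the claim reduces to showing $h$ is Lebesgue measurable on $\mathbb{R}^2$. I expect this to be the main obstacle: the composition of a Lebesgue-measurable $g$ with the continuous map $(t,s)\mapsto t-s$ need not be Lebesgue measurable in general. I would circumvent it using the additive structure of the map, choosing a Borel representative $g_B = g$ a.e., noting that $g_B(t-s)$ is Borel measurable, and showing that the exceptional set $\{(t,s) : t-s \in N\}$, where $N = \{g \ne g_B\}$ is Lebesgue-null, is itself null in the plane by a Fubini argument. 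Then $h$ agrees almost everywhere with a Borel function, hence is Lebesgue measurable, and the product property completes the proof.
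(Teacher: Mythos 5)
The paper never proves \Cref{bochnerprops}: it is stated as imported background (the surrounding material is all quoted from \cite{carvalhoneto2021}), so there is no in-paper argument to compare yours against, and your proposal must stand on its own. It does. Your part (ii) is precisely the classical proof of Bochner's theorem: the truncation $\varphi_n = s_n\chi_{B_n}$ with $B_n=\{\|s_n\|_X\le 2\|f\|_X\}$, the Chebyshev argument giving finite measure to the level sets of nonzero values, the pointwise check of $\varphi_n\to f$ a.e.\ (including the observation that $\varphi_n$ vanishes on $\{f=0\}$), the domination $\|\varphi_n-f\|_X\le 3\|f\|_X$, and dominated convergence. Your part (i) correctly isolates the one genuine subtlety, which a careless proof would miss: $g(t-s)$ need not be Lebesgue measurable on the plane, because preimages of Lebesgue measurable sets under the continuous map $(t,s)\mapsto t-s$ can fail to be measurable. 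The repair --- replace $g$ by a Borel representative $g_B$, note $g_B(t-s)$ is Borel, and show $\{(t,s): t-s\in N\}$ with $N=\{g\neq g_B\}$ is planar-null by enclosing $N$ in a Borel null set and applying Tonelli, then invoke completeness of Lebesgue measure --- is the standard and correct fix, and the closure of Bochner measurability under multiplication by scalar measurable functions (via products of approximating simple functions) finishes the argument.

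One small gap is worth closing in part (ii), forward direction: you assert that Bochner measurability of $f$ is ``immediate from the definition''. Under \Cref{bochnerdef}(iv), Bochner integrability only provides simple functions with $\int_E\|\varphi_n-f\|_X\,d\mu\to 0$, i.e.\ $L^1$-convergence, whereas \Cref{bochnerdef}(iii) demands a.e.\ convergence. These are not the same; you must pass to a subsequence. The quickest route: choose $n_k$ with $\int_E\|\varphi_{n_k}-f\|_X\,d\mu\le 2^{-k}$, so that $\int_E\sum_k\|\varphi_{n_k}-f\|_X\,d\mu<\infty$ by monotone convergence, whence $\sum_k\|\varphi_{n_k}(x)-f(x)\|_X<\infty$ a.e.\ and therefore $\varphi_{n_k}\to f$ a.e. With that one line added, the proof is complete.
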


\begin{definition} \label{functionnorm} \emph{\cite[Definition 3]{carvalhoneto2021}}
Consider $1\leq p\leq \infty$. $L^p(I;X)$ denotes the space of all Bochner measurable functions $f : I\rightarrow X$ in which $\|f\|_X\in L^p(I;\mathbb{R})$.\footnote{$L^p(I;\mathbb{R})$ represents the classical Lebesgue space.} $L^p(I;X)$ is a \emph{Banach space} with the norm,
$$\|f\|_{L^p(I;X)} = \begin{cases}
\left[\int_I \|f(s)\|_X^p ds\right]^\frac{1}{p},   \hspace{2.35mm}\text{ if } 1\leq p<\infty\\
\esssup_{s\in I}\|f(s)\|_X,   \text{ if } p = \infty
\end{cases}
$$
\end{definition}

From this, we can define an \emph{operator norm}:

\begin{definition} \label{operatornorm} \emph{\cite{carvalhoneto2021}}
Consider a linear operator $A\in\mathscr{L}(L^p(x_0,x_1;X))$. Then, we define its \emph{operator induced norm} as

$$\|A\|_{\mathscr{L}(L^p(x_0,x_1;X))}\coloneqq\sup_{f\in L^p(x_0,x_1;X)}\frac{\|Af\|_{L^p(x_0,x_1;X)}}{\|f\|_{L^p(x_0,x_1;X)}}$$

Because the notation is cumbersome, we will denote the operator norm of bounded linear operators as simply $\|\cdot\|$, unless there is potential for confusion between different norms.
\end{definition}

Under this norm, $\mathscr{L}(L^p(x_0,x_1;X))$ becomes a Banach space. That being said, this is not the only topology that we can define. We also introduce the \emph{strong operator topology}:

\begin{definition} \label{sot} \emph{\cite{isem15}}
Consider a sequence $(A_n)$ consisting of elements of $\mathscr{L}(L^p(x_0,x_1;X))$. 

\begin{enumerate}[label=(\roman*)]
    \item $(A_n)$ \emph{uniformly converges} to $A$ iff
        $$\lim_{n\to\infty}\|A_n-A\|_{\mathscr{L}(L^p(x_0,x_1;X))}=0$$
    \item $(A_n)$ \emph{strongly converges} to $A$ iff for all $f\in L^p(x_0,x_1;X)$,
        $$\lim_{n\to\infty}\|A_nf-Af\|_{L^p(x_0,x_1;X)}=0$$
\end{enumerate}
The different definitions of convergence implies different notions of open sets, and different topologies on $\mathscr{L}(L^p(x_0,x_1;X))$. Uniform convergence is convergence in the topology induced by the operator norm, while convergence in the \emph{strong operator topology} is precisely strong convergence.

Note that uniform convergence implies strong convergence, but not vice versa.
\end{definition}

We note that many theorems regarding properties of Lebesgue integrals also carry over to Bochner integrals. In particular:

\begin{theorem} \label{fubini} \emph{(Fubini's Theorem for Bochner integrals) \cite[Proposition 1.2.7]{hytonen2016}}
Given $\sigma$-finite measure spaces $S, T$ (that is, the measure spaces are a countable union of elements in their $\sigma$-algebras) and Banach space $X$, if $f: S\times T\to X$ is a function such that 

$$\int_{S\times T}\|f(s, t)\|_X\,d\mu(s, t)<\infty$$

then,

$$\int_{S\times T}f(s, t)\,d\mu(s, t)=\int_S\int_T f(s, t)\,d\mu(t)\,d\mu(s)=\int_T\int_S f(s, t)\,d\mu(s)\,d\mu(t)$$
\end{theorem}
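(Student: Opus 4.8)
The plan is to reduce the vector-valued identity to the classical scalar Fubini--Tonelli theorem by testing against bounded linear functionals, after first verifying that every integral appearing in the claimed chain of equalities is genuinely well defined. The natural starting point is the hypothesis $\int_{S\times T}\|f(s,t)\|_X\,d\mu(s,t)<\infty$: combined with \Cref{bochnerprops}(ii), this guarantees that $f$ is Bochner integrable over the product $S\times T$, so the left-hand integral exists. It then remains to make sense of the two iterated integrals and to show that all three objects coincide.

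First I would establish well-definedness of the iterated integrals. Applying the ordinary scalar Tonelli theorem to the nonnegative measurable map $(s,t)\mapsto\|f(s,t)\|_X$, the finiteness of its double integral forces $t\mapsto\|f(s,t)\|_X$ to lie in $L^1(T)$ for $\mu$-almost every $s$, and likewise $s\mapsto\int_T\|f(s,t)\|_X\,d\mu(t)$ to lie in $L^1(S)$. Combined with the Bochner measurability of the sections $t\mapsto f(s,t)$ and the equivalence of Bochner integrability with measurability plus integrability of the norm (\Cref{bochnerprops}(ii)), this shows that $t\mapsto f(s,t)$ is Bochner integrable over $T$ for almost every $s$, that the section integral $s\mapsto\int_T f(s,t)\,d\mu(t)$ is Bochner measurable and dominated in norm by an $L^1(S)$ function, and hence is itself Bochner integrable over $S$; the symmetric argument handles the reversed order of integration.

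With every integral now well defined, I would conclude by the duality argument. Fix an arbitrary bounded linear functional $x^*\in X^*$. Because the Bochner integral commutes with bounded linear functionals, we have $\langle x^*,\int f\rangle=\int\langle x^*,f\rangle$ for each of the three integrals, and the scalar function $\langle x^*,f(s,t)\rangle$ is integrable over $S\times T$ since it is bounded in modulus by $\|x^*\|\,\|f(s,t)\|_X$. The classical Fubini theorem therefore applies to $\langle x^*,f\rangle$ and yields equality of the three scalar iterated integrals; equivalently, $x^*$ takes the same value on all three candidate vectors of $X$. As $x^*\in X^*$ was arbitrary, the Hahn--Banach theorem (specifically, the fact that $X^*$ separates points of $X$) forces the three vectors to coincide, which is exactly the asserted identity.

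The main obstacle I anticipate is the well-definedness step rather than the final equality. One must verify that the partial maps $t\mapsto f(s,t)$ are Bochner measurable for almost every fixed $s$, and that the section integral $s\mapsto\int_T f(s,t)\,d\mu(t)$ inherits Bochner measurability. This is the genuinely measure-theoretic part: here one typically invokes the Pettis measurability theorem, so that Bochner measurability reduces to essential separable-valuedness together with weak measurability, each of which passes to sections and to the section integral, and one uses the $\sigma$-finiteness hypothesis to license the application of scalar Tonelli. Once this groundwork is in place, the reduction to the scalar theorem through $X^*$ is routine.
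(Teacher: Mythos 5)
The paper does not actually prove this theorem: it is stated as a preliminary and attributed to \cite[Proposition 1.2.7]{hytonen2016}, so there is no in-paper proof to compare yours against. Taken on its own merits, your argument is the standard one and is essentially correct: scalar Tonelli applied to $(s,t)\mapsto\|f(s,t)\|_X$ gives integrability of almost every section and of the section-integral; the Pettis measurability theorem (essential separable-valuedness plus weak measurability) gives Bochner measurability of the sections $t\mapsto f(s,t)$ and of $s\mapsto\int_T f(s,t)\,d\mu(t)$, so all three integrals exist by \Cref{bochnerprops}(ii); and since Bochner integrals commute with functionals $x^*\in X^*$, the classical Fubini theorem applied to $\langle x^*, f\rangle$ (which is dominated by $\|x^*\|\,\|f(s,t)\|_X$) together with the fact that $X^*$ separates points of $X$ forces the three vectors to coincide. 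One caveat you should make explicit: the hypothesis as stated only demands finiteness of $\int_{S\times T}\|f\|_X$, whereas your appeal to \Cref{bochnerprops}(ii) requires $f$ to be Bochner measurable on the product in the first place. This is implicit in the statement (otherwise the norm integral is not even well defined), but it deserves a sentence, since joint Bochner measurability is precisely what feeds the section/Pettis step and guarantees the joint measurability of $\langle x^*, f\rangle$ needed to invoke scalar Fubini.
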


\begin{theorem}\label{leibnizintrule}\emph{\cite[Theorem 53]{carvalhoneto2021} (Leibniz integral rule)} 
Suppose $f: S\times T \rightarrow X$ be a Bochner measurable function, which is Bochner integrable with respect to the second variable, and the functions $\phi$, $\psi: S\rightarrow T$ be differentiable. Suppose further that the partial derivative of $f(s,t)$ exists for almost every $(s,t)\in S\times T$ then the following holds:
$$\frac{\mathrm{d}}{\mathrm{d}s}\int_{\psi(s)}^{\phi(s)}f(s,t)dt = f(s,\phi(s))\phi'(s) - f(s,\psi(s))\psi'(s) +\int_{\psi(s)}^{\phi(s)}\frac{\partial}{\partial s}f(s,t)dt$$
\end{theorem}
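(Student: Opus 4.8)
The plan is to reduce the statement to three more elementary facts by introducing the auxiliary $X$-valued function of three real variables
$$G(s,a,b)\coloneqq\int_a^b f(s,t)\,dt,$$
so that the left-hand side is exactly $\frac{\mathrm{d}}{\mathrm{d}s}G\bigl(s,\psi(s),\phi(s)\bigr)$. Since $s\mapsto\bigl(s,\psi(s),\phi(s)\bigr)$ is a differentiable curve in $\mathbb{R}^3$ and $G$ takes values in the Banach space $X$, the chain rule for Banach-valued functions of a real parameter yields
$$\frac{\mathrm{d}}{\mathrm{d}s}G\bigl(s,\psi(s),\phi(s)\bigr)=\partial_s G+\partial_a G\,\psi'(s)+\partial_b G\,\phi'(s),$$
with the partials evaluated at $\bigl(s,\psi(s),\phi(s)\bigr)$. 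It therefore suffices to identify the three partial derivatives of $G$ and collect terms.

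First I would dispatch the two boundary partials $\partial_a G$ and $\partial_b G$. These are precisely the assertion that $b\mapsto\int_a^b f(s,t)\,dt$ is differentiable with derivative $f(s,b)$, i.e. the fundamental theorem of calculus for the Bochner integral. Because $f(s,\cdot)$ is Bochner integrable in $t$ by hypothesis, so that $\|f(s,\cdot)\|_X\in L^1$ via \Cref{bochnerprops}, the standard Lebesgue-point argument carries over verbatim to the $X$-valued setting and gives $\partial_b G=f(s,b)$ and, reversing the roles of the limits, $\partial_a G=-f(s,a)$. Substituting $a=\psi(s)$ and $b=\phi(s)$ produces the boundary contribution $f(s,\phi(s))\phi'(s)-f(s,\psi(s))\psi'(s)$.

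The remaining partial, differentiation under the integral sign with fixed limits, is where the real content lies: I must show $\partial_s G=\int_a^b\partial_s f(s,t)\,dt$. The cleanest route is to combine \Cref{fubini} with the one-dimensional Bochner fundamental theorem of calculus. For fixed $a,b$ and a reference point $s_0$, I integrate the partial derivative over a rectangle and swap the order of integration,
$$\int_{s_0}^{s}\!\int_a^b\partial_\sigma f(\sigma,t)\,dt\,d\sigma=\int_a^b\!\int_{s_0}^{s}\partial_\sigma f(\sigma,t)\,d\sigma\,dt=\int_a^b\bigl[f(s,t)-f(s_0,t)\bigr]\,dt=G(s,a,b)-G(s_0,a,b),$$
where the middle equality is the Bochner FTC applied in $\sigma$. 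Differentiating the outermost integral in $s$ then delivers $\partial_s G=\int_a^b\partial_s f(s,t)\,dt$, as required, and the chain-rule formula above assembles into the claimed identity.

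The main obstacle is twofold and rests on a single missing ingredient. The interchange of integrals invokes \Cref{fubini}, which demands $\int_{[s_0,s]\times[a,b]}\|\partial_\sigma f\|_X\,dt\,d\sigma<\infty$; and the chain rule in the first step requires $G$ to be \emph{totally} differentiable, for which continuity of its three partials suffices. Both are secured once $\partial_s f$ admits a locally integrable majorant $\|\partial_s f(s,t)\|_X\le g(t)$ with $g\in L^1$ — a condition I would read into the hypotheses, since the literal assumption of mere almost-everywhere existence of $\partial_s f$ is by itself insufficient to license either step. With such a $g$ in hand, the Bochner dominated convergence theorem validates both the Fubini interchange and the passage of the difference-quotient limit inside the integral. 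As an alternative that sidesteps the auxiliary $G$ altogether, one may work directly with the difference quotient, splitting it into a moving-boundary piece that converges to $f(s,\phi(s))\phi'(s)-f(s,\psi(s))\psi'(s)$ by continuity of the integral in its endpoints and a frozen-boundary piece governed by the same dominated-convergence argument.
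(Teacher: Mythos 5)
A preliminary remark on the comparison itself: the paper never proves this statement. It is quoted verbatim as a known result, \cite[Theorem 53]{carvalhoneto2021}, and used later as a black box (in the resolvent computation and in the appendix). So your proposal can only be measured against the standard argument, not against anything in the paper. Your outline is indeed the standard one: set $G(s,a,b)=\int_a^b f(s,t)\,dt$, apply the chain rule along $s\mapsto(s,\psi(s),\phi(s))$, obtain the boundary partials from the fundamental theorem of calculus, and obtain $\partial_s G$ by differentiating under the integral sign. You are also right, and it is to your credit that you say so explicitly, that the hypotheses as literally stated (mere a.e.\ existence of $\partial_s f$) cannot license these steps.

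The genuine gap is that the single ingredient you propose to add --- an $L^1$ majorant $\|\partial_s f(s,t)\|_X\le g(t)$ --- does not close the hole you identified. The decisive step of your $\partial_s G$ argument is the middle equality
$$\int_{s_0}^{s}\partial_\sigma f(\sigma,t)\,d\sigma=f(s,t)-f(s_0,t),$$
which is the fundamental theorem of calculus for $f(\cdot,t)$ and requires $f(\cdot,t)$ to be \emph{absolutely continuous}; a.e.\ differentiability with a dominated derivative does not imply this. The Cantor function $c$ is the counterexample: for $f(s,t)=c(s)$ the partial $\partial_s f$ exists a.e., equals $0$, and is dominated by any $g\in L^1$, yet $\int_{s_0}^{s}\partial_\sigma f\,d\sigma=0\neq c(s)-c(s_0)$, so this step of your proof simply fails under the hypotheses you allow. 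The same defect undermines your fallback route: dominating the difference quotients $\frac{1}{h}\left(f(s+h,t)-f(s,t)\right)$ by $g(t)$ goes through the mean value inequality, which needs differentiability of $f(\cdot,t)$ at \emph{every} point of the segment (for a.e.\ $t$), not at a.e.\ $(s,t)$. The repair is to strengthen the read-in hypothesis accordingly: assume $\partial_s f(\cdot,t)$ exists everywhere in $s$ for a.e.\ $t$ and is dominated by $g(t)$ (then $f(\cdot,t)$ is Lipschitz with constant $g(t)$, hence absolutely continuous, and both your Fubini--FTC route and the dominated-convergence route are valid), or assume absolute continuity of $f(\cdot,t)$ outright. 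A secondary but real point of the same kind: the chain rule at the start needs $G$ to be totally differentiable, and the continuity of the boundary partials $\partial_b G=f(s,b)$, $\partial_a G=-f(s,a)$ requires continuity of $f$ in $t$ --- which domination does not supply, and which is in any case implicitly needed for the pointwise values $f(s,\phi(s))$, $f(s,\psi(s))$ in the formula to be meaningful.
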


We also list some useful inequalities for studying norms:

\begin{corollary}\label{minkowskicorollary}\emph{\cite[Theorem 46]{carvalhoneto2021} (Minkowski's inequality for integrals)}
Suppose that $S$ and $T$ are measure spaces, the function $f: S\times T\rightarrow \mathbb{R}$ is Lebesgue measurable and let $1\leq p\leq\infty$. Then, 
$$\left[\int_T\left|\int_Sf(s,t)ds\right|^pdt\right]^\frac{1}{p}\leq\int_S\left[\int_T|f(s,t)|^pdt\right]^\frac{1}{p}ds$$

\end{corollary}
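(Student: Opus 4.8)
The plan is to prove the inequality by a duality argument, combining the classical Hölder inequality with the Fubini--Tonelli theorem. First I would reduce to the case $f\geq 0$: since $\left|\int_S f(s,t)\,ds\right|\leq \int_S |f(s,t)|\,ds$ for each $t$, replacing $f$ by $|f|$ only enlarges the left-hand side while leaving the right-hand side unchanged, so it suffices to treat nonnegative $f$. The two endpoint exponents are then immediate. For $p=1$, the triangle inequality for integrals followed by interchanging the order of integration (Tonelli's theorem, the nonnegative counterpart of \Cref{fubini}) gives $\int_T \int_S f(s,t)\,ds\,dt=\int_S\int_T f(s,t)\,dt\,ds$, which is the claim. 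For $p=\infty$, the pointwise bound $\int_S f(s,t)\,ds\leq \int_S \esssup_{t'} f(s,t')\,ds$, valid for almost every $t$, yields the result after taking the essential supremum over $t$.

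The heart of the matter is the range $1<p<\infty$. Writing $F(t)\coloneqq \int_S f(s,t)\,ds$ and letting $q=p/(p-1)$ be the conjugate exponent, the idea is to estimate $\|F\|_{L^p(T)}^p$ directly by peeling off one power of $F$:
\[
\int_T F(t)^p\,dt=\int_T F(t)^{p-1}\!\int_S f(s,t)\,ds\,dt=\int_S\int_T F(t)^{p-1} f(s,t)\,dt\,ds,
\]
where the interchange is again justified by Tonelli's theorem. Applying Hölder's inequality to the inner integral in $t$, with exponents $q$ and $p$, and using the bookkeeping identity $(p-1)q=p$, the factor $\left[\int_T F(t)^{(p-1)q}\,dt\right]^{1/q}$ collapses to $\|F\|_{L^p(T)}^{p-1}$. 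This produces
\[
\|F\|_{L^p(T)}^p\leq \|F\|_{L^p(T)}^{p-1}\int_S\left[\int_T f(s,t)^p\,dt\right]^{1/p}ds,
\]
and dividing through by $\|F\|_{L^p(T)}^{p-1}$ gives precisely the asserted inequality.

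The hard part will be that this final division is only legitimate when $0<\|F\|_{L^p(T)}<\infty$, whereas a priori $F$ need not even be $p$-integrable; I expect this to be the one genuinely delicate step. I would resolve it by truncation: replace $f$ by $f_N\coloneqq \min(f,N)\,\chi_{S_N\times T_N}$, where $S_N,T_N$ are finite-measure sets exhausting $S$ and $T$ (available by $\sigma$-finiteness). For each $N$ the corresponding $F_N$ is bounded and supported on a set of finite measure, hence lies in $L^p$, so the division is valid and the inequality holds for $f_N$ with right-hand side dominated by that of $f$. Since $f_N\uparrow f$, the monotone convergence theorem then upgrades the inequality from $f_N$ to $f$, completing the argument; and if the right-hand side for $f$ is infinite there is nothing to prove. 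A secondary point worth noting is that, after this truncation, all integrals involved are finite, so every interchange above may alternatively be justified by \Cref{fubini} directly rather than by appeal to its nonnegative companion.
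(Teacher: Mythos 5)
The paper never proves this statement: it imports it as a known result, citing \cite[Theorem 46]{carvalhoneto2021}, and uses it only as a black box (e.g.\ to deduce \Cref{C48}). Your proposal therefore supplies a proof where the paper has none, and it is essentially correct; it is the classical textbook argument: reduce to $f\geq 0$, dispose of the endpoints $p=1$ (Tonelli) and $p=\infty$, and for $1<p<\infty$ peel off one power of $F(t)=\int_S f(s,t)\,ds$, interchange the order of integration, apply H\"older with the bookkeeping identity $(p-1)q=p$, and divide by $\|F\|_{L^p(T)}^{p-1}$. You also correctly identify and repair the one delicate point — the division is legitimate only when $0<\|F\|_{L^p(T)}<\infty$ — via truncation and monotone convergence. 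Two small caveats are worth recording. First, both Tonelli and your truncation require $S$ and $T$ to be $\sigma$-finite; this hypothesis is absent from the corollary as stated in the paper but is present in the paper's own Fubini theorem (\Cref{fubini}), so you should make it explicit rather than invoke it silently. Second, in the $p=\infty$ case the pointwise bound $f(s,t)\leq\esssup_{t'}f(s,t')$ holds for a.e.\ $t$ with the exceptional null set depending on $s$; to conclude $\int_S f(s,t)\,ds\leq\int_S\esssup_{t'}f(s,t')\,ds$ for a.e.\ $t$ one needs one further application of Tonelli to the set $\{(s,t)\mid f(s,t)>\esssup_{t'}f(s,t')\}$ (every $s$-section is null, hence the set is null, hence almost every $t$-section is null), together with measurability of $s\mapsto\esssup_{t'}f(s,t')$. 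Neither caveat is a structural flaw; with them filled in, your argument is a sound, self-contained replacement for the citation.
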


The following theorem is an immediate consequence of the above (Minkowski's inequality for integrals).

\begin{theorem}\label{C48}\emph{\cite[Theorem 48]{carvalhoneto2021}}
Suppose the function $f: [t_0,t_1] \rightarrow X$ is Bochner integrable and $g:\mathbb{R}\rightarrow [0,\infty)$ is a locally Lebesgue integrable function. If $1\leq p\leq\infty$ and $0\leq h\leq t_1-t_0$ then the following inequality holds:
\begin{multline*}
\left[\int_0^{t_1-t_0-h}\left[\int_0^rg(s)\|f(r+t_0-s)\|_Xds\right]^pdr\right]^\frac{1}{p}\\\leq\int_0^{t_1-t_0-h}g(s)\left[\int_s^{t_1-t_0-h}\|f(r+t_0-s)\|_X^pdr\right]^\frac{1}{p}ds
\end{multline*}
\end{theorem}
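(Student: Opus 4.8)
The plan is to deduce the inequality directly from Minkowski's inequality for integrals (Corollary \ref{minkowskicorollary}). The only discrepancy between the two statements is that the domain of integration here is not a product: on the left-hand side the inner integral runs over $[0,r]$, whose endpoint is the \emph{outer} integration variable $r$, whereas Corollary \ref{minkowskicorollary} requires the integrand to be defined on a fixed product space $S\times T$. So the first and decisive step is to replace the triangular region $\{(s,r):0\le s\le r\le t_1-t_0-h\}$ by the square $[0,b]^2$, where $b:=t_1-t_0-h$, absorbing the variable endpoint into the integrand via an indicator function.

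Concretely, I would define on $[0,b]\times[0,b]$ the nonnegative scalar function
$$F(s,r):=g(s)\,\|f(r+t_0-s)\|_X\,\chi_{\{0\le s\le r\}}.$$
Two observations make this legitimate. First, whenever $\chi_{\{0\le s\le r\}}=1$ one has $0\le s\le r\le b$, hence $r+t_0-s\in[t_0,t_1-h]\subseteq[t_0,t_1]$, so that the evaluation $f(r+t_0-s)$ is meaningful. Second, $F$ is Lebesgue measurable, being the product of the measurable function $g(s)$ with the jointly measurable map $(s,r)\mapsto\|f(r+t_0-s)\|_X$ (cf. Lemma \ref{bochnerprops}(i), which furnishes exactly this kind of translated integrand) and the indicator of the triangle. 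With this definition the inner integral equals $\int_0^r g(s)\|f(r+t_0-s)\|_X\,ds=\int_0^b F(s,r)\,ds$, so the left-hand side of the claim is $\big[\int_0^b(\int_0^b F(s,r)\,ds)^p\,dr\big]^{1/p}$.

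I would then apply Corollary \ref{minkowskicorollary} with $S=T=[0,b]$, the variable $s$ playing the inner role and $r$ the outer one; since $F\ge 0$ the absolute values may be dropped. This gives
$$\left[\int_0^b\left(\int_0^b F(s,r)\,ds\right)^p dr\right]^{1/p}\le\int_0^b\left[\int_0^b F(s,r)^p\,dr\right]^{1/p}ds.$$
It remains to simplify the right-hand side. Since $\chi_{\{0\le s\le r\}}^p=\chi_{\{0\le s\le r\}}$ and $g(s)\ge 0$, we have $F(s,r)^p=g(s)^p\|f(r+t_0-s)\|_X^p\chi_{\{s\le r\}}$, and the indicator merely restores the lower limit $s$ on the $r$-integral. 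Pulling the nonnegative factor $g(s)$ outside the $p$-th root then produces exactly $\int_0^b g(s)\big[\int_s^b\|f(r+t_0-s)\|_X^p\,dr\big]^{1/p}ds$, which is the right-hand side of the theorem. The case $p=\infty$ requires no separate argument: Corollary \ref{minkowskicorollary} is stated for the full range $1\le p\le\infty$, and the same bookkeeping applies with the essential supremum in $r$ over $[s,b]$ replacing the $L^p$-integral.

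The only genuine obstacle is this first reformulation, namely recognising that the mismatch with Minkowski's hypotheses is purely a matter of the integration domain and that the indicator-function rewriting converts the variable-endpoint integral into an honest product-space integral while changing neither side of the inequality. Everything downstream—verifying that the shifted argument $r+t_0-s$ stays in $[t_0,t_1]$, noting $F\ge 0$, and collapsing the indicator back into the limits of integration—is routine.
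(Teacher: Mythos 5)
Your proof is correct and takes essentially the same route the paper intends: the paper presents this statement as an ``immediate consequence'' of Corollary~\ref{minkowskicorollary}, and your indicator-function rewriting of the triangular region $\{0\le s\le r\le t_1-t_0-h\}$ as a product-space integrand on $[0,b]^2$ is precisely the bookkeeping that makes that deduction rigorous, including the correct handling of the shifted argument $r+t_0-s$ and the $p=\infty$ case. There is no gap.
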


\begin{theorem}\label{holder}\emph{(Hölder's inequality)}
Let E be a measure space, $1\leq p, q\leq\infty$ be numbers such that $\frac{1}{p}+\frac{1}{q}=1$\footnote{We define $\frac{1}{\infty}=0$} and functions $f,g: E\to\mathbb{C}$ be measurable functions. Then the following inequality holds:
$$\|fg\|_{L^1(E)}\leq \|f\|_{L^p(E)}\|g\|_{L^q(E)}$$
\end{theorem}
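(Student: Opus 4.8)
The plan is to reduce the inequality to a pointwise estimate (Young's inequality) and then integrate over $E$. First I would dispose of the boundary and degenerate cases. If $p=1$ and $q=\infty$ (or symmetrically), then $|f(x)g(x)|\leq |f(x)|\,\|g\|_{L^\infty(E)}$ for almost every $x\in E$, and integrating over $E$ yields the claim directly. For $1<p,q<\infty$, if either $\|f\|_{L^p(E)}$ or $\|g\|_{L^q(E)}$ is zero or infinite the inequality is immediate: a vanishing norm forces $f$ or $g$ to be zero almost everywhere, so $\|fg\|_{L^1(E)}=0$, while an infinite right-hand side makes the bound vacuous. Hence I may assume that both norms are finite and strictly positive.

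The key ingredient is Young's inequality: for $a,b\geq 0$ and conjugate exponents $1<p,q<\infty$, one has $ab\leq \frac{a^p}{p}+\frac{b^q}{q}$. I would establish this from the concavity of the logarithm. Assuming $a,b>0$ (the case $a=0$ or $b=0$ being trivial), applying the concave function $\log$ to the convex combination $\frac{1}{p}a^p+\frac{1}{q}b^q$ gives
$$\log\left(\frac{a^p}{p}+\frac{b^q}{q}\right)\geq \frac{1}{p}\log(a^p)+\frac{1}{q}\log(b^q)=\log a+\log b=\log(ab),$$
and exponentiating, since $\log$ is increasing, produces the desired pointwise bound.

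With Young's inequality in hand, I would normalize. Set $\hat{f}=f/\|f\|_{L^p(E)}$ and $\hat{g}=g/\|g\|_{L^q(E)}$, so that $\|\hat{f}\|_{L^p(E)}=\|\hat{g}\|_{L^q(E)}=1$. Applying Young's inequality pointwise with $a=|\hat{f}(x)|$ and $b=|\hat{g}(x)|$ and integrating over $E$ gives
$$\int_E|\hat{f}(x)\hat{g}(x)|\,dx\leq \frac{1}{p}\int_E|\hat{f}(x)|^p\,dx+\frac{1}{q}\int_E|\hat{g}(x)|^q\,dx=\frac{1}{p}+\frac{1}{q}=1.$$
Multiplying both sides by $\|f\|_{L^p(E)}\|g\|_{L^q(E)}$ and recognizing the left-hand side as $\|fg\|_{L^1(E)}$ recovers the stated inequality.

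There is no serious obstacle here, as this is a classical result. The only points demanding care are the bookkeeping of the degenerate and boundary cases and the justification of Young's inequality via concavity; both are routine, and once the pointwise estimate is secured the normalization argument is purely mechanical.
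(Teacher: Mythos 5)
Your proof is correct and complete, but there is nothing in the paper to compare it against: H\"older's inequality is stated in the preliminaries as a classical result and the paper gives no proof of it at all. Your route is the standard textbook argument --- dispose of the boundary case $p=1$, $q=\infty$ via the essential supremum bound, handle the degenerate cases of zero or infinite norms, prove Young's inequality $ab \le \frac{a^p}{p}+\frac{b^q}{q}$ from concavity of the logarithm, then normalize so that $\|\hat{f}\|_{L^p(E)}=\|\hat{g}\|_{L^q(E)}=1$ and integrate the pointwise estimate --- and each step is sound, including the case bookkeeping (treating vanishing norms before infinite ones correctly sidesteps the $0\cdot\infty$ ambiguity). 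Two cosmetic remarks only: on a general measure space $E$ the integrals should be written against $d\mu$ rather than $dx$, and you could note explicitly that $fg$ is measurable because $f$ and $g$ are, so the left-hand side is well defined. Neither affects the validity of the argument.
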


We now give some classical definitions and results regarding the theory of one-parameter semigroups, which is a powerful tool used in functional analysis and will be critical to our study.

\begin{definition} \label{semigroupdef} \emph{\cite[Definition 2.1]{isem15}}
Let $T : \left[0,\infty\right) \rightarrow \mathscr{L}(X)$ be a mapping. Then:

\begin{enumerate}[label=(\roman*)]
    \item \textit{$T$ is said to have the \emph{semigroup property} if, for all $t,s\in \left[0,\infty\right)$,}
$$T(t+s) = T(t)T(s)$$
and\footnote{$I$ is the identity operator on $X$} $$T(0) = I$$

    \item \textit{Suppose the function $T : \left[0,\infty\right) \rightarrow \mathscr{L}(X)$ has the semigroup property. If the mapping:
    $$t\longmapsto T(t)f\in X$$ 
    is continuous $\forall f\in X$, then $T$ is a \emph{strongly continuous one-parameter semigroup}  of bounded linear operators on Y.}\footnote{$T$ can equivalently be called a $C_0$-semigroup on $X$.}
\end{enumerate}
\end{definition}

We also have the following property of Bochner-Lebesgue spaces:

\begin{theorem}\label{smoothdense} \emph{\cite[Lemma 2.5]{neumeister2021curve}}
    Let $I=(x_0, x_1)$. Then, for each $f\in L^p(I; X)$ and $\varepsilon>0$, there exists a function $\phi_\varepsilon\in C^{\infty}(I; X)$ such that $\|f-\phi_\varepsilon\|_{L^p(I; X)}<\varepsilon$.
\end{theorem}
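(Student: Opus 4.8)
The plan is to prove the claim for $1\le p<\infty$; note that the statement must exclude $p=\infty$, since smooth (indeed continuous) functions are not dense in $L^\infty(I;X)$—a jump discontinuity, such as that of an indicator, cannot be approximated uniformly. The overall strategy is a three-stage approximation: approximate $f$ by a simple function, approximate the indicator sets appearing in that simple function by finite unions of open intervals, and finally smooth the resulting interval indicators by mollification.

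First I would reduce to simple functions. Since $f$ is Bochner measurable, \Cref{bochnerdef}(iii) supplies simple functions $\varphi_n\to f$ pointwise almost everywhere; by a standard truncation one may arrange the domination $\|\varphi_n(s)\|_X\le 2\|f(s)\|_X$. As $I$ is bounded and $\|f\|_X\in L^p(I;\mathbb{R})$, the scalar functions $\|\varphi_n-f\|_X^p\to 0$ a.e. are dominated by $(3\|f\|_X)^p\in L^1(I;\mathbb{R})$, so dominated convergence gives $\|\varphi_n-f\|_{L^p(I;X)}\to 0$. Fixing $n$ large, write $\varphi=\sum_{j=1}^N a_j\chi_{A_j}$ with $a_j\in X$ and $A_j\subseteq I$ measurable, where $\|f-\varphi\|_{L^p(I;X)}<\varepsilon/2$. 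By the triangle inequality it then suffices to approximate each single term $a_j\chi_{A_j}$ in $L^p(I;X)$ by a function in $C^\infty(I;X)$.

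Since $\|a_j\chi_{A_j}-a_j g\|_{L^p(I;X)}=\|a_j\|_X\,\|\chi_{A_j}-g\|_{L^p(I;\mathbb{R})}$ for any scalar $g$, the problem collapses to the scalar one of approximating $\chi_{A_j}$ in $L^p(I;\mathbb{R})$ by a smooth scalar function. Because $I$ is bounded, $A_j$ has finite measure; by outer regularity of Lebesgue measure there is a finite union of open intervals $V_j\subseteq\mathbb{R}$ with $\mu(A_j\triangle V_j)$ as small as desired, whence $\|\chi_{A_j}-\chi_{V_j}\|_{L^p}^p=\mu(A_j\triangle V_j)$ is small. It then remains to smooth $\chi_{V_j}$. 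Fixing a mollifier $\eta\in C_c^\infty(\mathbb{R})$ with $\eta\ge 0$ and $\int_{\mathbb{R}}\eta=1$, and writing $\eta_\delta(x)=\delta^{-1}\eta(x/\delta)$, set $g_{j,\delta}=\eta_\delta*\chi_{V_j}$. This convolution is smooth—differentiation under the integral sign is justified by the Leibniz rule (\Cref{leibnizintrule})—and since $\eta_\delta$ has mass one and support in $(-\delta,\delta)$, one has $0\le g_{j,\delta}\le 1$ with $g_{j,\delta}=\chi_{V_j}$ outside the $\delta$-neighbourhood of the finitely many endpoints of $V_j$; thus $\|g_{j,\delta}-\chi_{V_j}\|_{L^p}^p\le 2\delta\cdot(\#\,\text{endpoints})\to 0$. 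Taking $\delta$ small and restricting to $I$ produces smooth scalar $g_j$ with $\|\chi_{A_j}-g_j\|_{L^p(I;\mathbb{R})}$ small, so that $\phi_\varepsilon=\sum_{j=1}^N a_j g_j\in C^\infty(I;X)$ satisfies $\|\varphi-\phi_\varepsilon\|_{L^p(I;X)}<\varepsilon/2$, and the triangle inequality closes the argument.

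The main obstacle is the first reduction, the density of simple functions in $L^p(I;X)$: the definition of Bochner measurability provides only almost-everywhere pointwise approximation, and upgrading this to convergence in the $L^p$ norm requires the domination $\|\varphi_n\|_X\lesssim\|f\|_X$ so that dominated convergence applies. Securing this domination—truncating the approximating simple functions without destroying the a.e. convergence—is the delicate point; everything downstream (regularity of Lebesgue measure, smoothness of the mollification, and the endpoint measure estimate) is routine and entirely scalar, with the vector structure entering only through the factorisation $\|a\,\chi-a\,g\|_{L^p(I;X)}=\|a\|_X\,\|\chi-g\|_{L^p(I;\mathbb{R})}$.
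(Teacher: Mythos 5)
Your argument is correct, but there is nothing in the paper to compare it against: the paper does not prove this statement at all, it imports it wholesale from the literature as \cite[Lemma 2.5]{neumeister2021curve} and uses it as a black box. What you have supplied is the standard three-stage density proof (simple functions by dominated convergence, then Littlewood's first principle to replace each measurable set by a finite union of open intervals, then mollification of the interval indicators), and each stage is sound. In particular, the truncation you flag as the delicate point does work exactly as you say: setting $\tilde\varphi_n=\varphi_n\chi_{E_n}$ with $E_n=\{s:\|\varphi_n(s)\|_X\le 2\|f(s)\|_X\}$ keeps $\tilde\varphi_n$ simple, and it still converges to $f$ almost everywhere, since at points where $f(s)\neq 0$ the truncation is eventually inactive while at points where $f(s)=0$ it forces $\tilde\varphi_n(s)=0=f(s)$; after that, domination by $(3\|f\|_X)^p\in L^1(I;\mathbb{R})$ closes the reduction. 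Your opening caveat is also a genuine correction to the statement as printed: the paper's \Cref{functionnorm} permits $1\le p\le\infty$, and \Cref{smoothdense} is stated with no restriction on $p$, yet the result fails for $p=\infty$ (an indicator function cannot be uniformly approximated by continuous functions), so the lemma should be read with $1\le p<\infty$ — which is all the paper ever needs, since its later strong-continuity arguments work in the $L^p$ norm for finite $p$ or proceed by other means. One small presentational point: the smoothness of $\eta_\delta*\chi_{V_j}$ is more elementary than an appeal to \Cref{leibnizintrule} suggests (it is the standard fact that convolution with a $C_c^\infty$ kernel is $C^\infty$), but citing that theorem is not wrong.
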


We first introduce a lemma regarding sets of bounded operators:

\begin{lemma} \label{uniformbound} \emph{\cite[Theorem 2.28]{isem15} (Uniform Boundedness Principle)}
Let $X, Y$ be a Banach space and let $S$ be a subset of $\mathscr{L}(X, Y)$. Then, if for all $x\in X$, we find

$$\sup\{\|Ax\|\mid A\in S\}<\infty$$

we say $S$ is \emph{uniformly bounded} - that is,

$$\sup\{\|A\|\mid A\in S\}<\infty$$
\end{lemma}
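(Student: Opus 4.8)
The plan is to prove this via the Baire Category Theorem, which is the standard route and exploits the completeness of $X$ in an essential way. For each $n\in\mathbb{N}$ I would define the level set
$$E_n \coloneqq \{x\in X \mid \|Ax\|\leq n \text{ for all } A\in S\} = \bigcap_{A\in S}\{x\in X\mid \|Ax\|\leq n\}.$$
The first step is to observe that each $E_n$ is closed: for fixed $A$ the map $x\mapsto\|Ax\|$ is continuous (a composition of the bounded operator $A$ with the norm), so each $\{x\mid\|Ax\|\leq n\}$ is closed, and an arbitrary intersection of closed sets is closed. The pointwise boundedness hypothesis $\sup\{\|Ax\|\mid A\in S\}<\infty$ says precisely that every $x\in X$ lies in some $E_n$, so $X=\bigcup_{n=1}^\infty E_n$.

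Since $X$ is a Banach space it is a complete metric space, so the Baire Category Theorem applies: $X$ cannot be written as a countable union of nowhere dense sets. Hence at least one $E_N$ must have nonempty interior. The next step is to extract from this a closed ball $\overline{B}(x_0, r)\subseteq E_N$ for some centre $x_0\in X$ and radius $r>0$; by the definition of $E_N$ this means $\|A\xi\|\leq N$ for every $A\in S$ and every $\xi$ with $\|\xi-x_0\|\leq r$.

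The final step converts this local bound into a uniform bound on $\|A\|$. For any $x\in X$ with $\|x\|\leq r$ and any $A\in S$, both $x_0+x$ and $x_0$ lie in $\overline{B}(x_0,r)$, so by linearity and the triangle inequality
$$\|Ax\| = \|A(x_0+x) - Ax_0\| \leq \|A(x_0+x)\| + \|Ax_0\| \leq 2N.$$
Dividing by $r$ and taking the supremum over $\|x\|\leq r$ yields $\|A\|\leq 2N/r$ for every $A\in S$, whence $\sup\{\|A\|\mid A\in S\}\leq 2N/r<\infty$, as desired.

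The main obstacle, and the only genuinely nontrivial ingredient, is the appeal to the Baire Category Theorem to locate a single ball on which the whole family $S$ is simultaneously bounded; everything before it is bookkeeping about closed sets, and everything after it is elementary manipulation with linearity. An alternative that sidesteps Baire entirely is Sokal's elementary argument: one first proves $\sup_{\|\xi\|\leq r}\|A\xi\|\geq r\|A\|$ for a single operator via the observation $\max(\|A(x+\xi)\|,\|A(x-\xi)\|)\geq\|A\xi\|$, then assumes $\sup_{A\in S}\|A\|=\infty$, extracts operators $A_n\in S$ with $\|A_n\|\geq 4^n$, and builds a rapidly converging Cauchy sequence $(x_n)$ whose limit $x$ forces $\|A_nx\|\to\infty$, contradicting pointwise boundedness. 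I would present the Baire proof as the primary argument, since it is the most transparent and the completeness of $X$ enters cleanly.
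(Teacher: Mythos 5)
Your proof is correct, and it fills a gap rather than paralleling anything in the paper: the paper never proves this lemma at all, stating it as a result quoted from the literature and using it only as a black box in the proof of \Cref{semigroupbound}. Your Baire category argument is the standard one and every step checks out: the sets $E_n=\bigcap_{A\in S}\{x\in X\mid\|Ax\|\leq n\}$ are closed as intersections of preimages of closed sets under the continuous maps $x\mapsto\|Ax\|$; pointwise boundedness gives $X=\bigcup_{n=1}^\infty E_n$; the Baire Category Theorem (which uses completeness of $X$ only — you are right that completeness of $Y$ is never needed, so the paper's hypothesis is slightly stronger than necessary) yields some $E_N$ with nonempty interior and hence a closed ball $\overline{B}(x_0,r)\subseteq E_N$; and the translation trick $\|Ax\|\leq\|A(x_0+x)\|+\|Ax_0\|\leq 2N$ for $\|x\|\leq r$, combined with homogeneity of the operator norm, gives $\sup_{A\in S}\|A\|\leq 2N/r<\infty$. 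Your closing remark about Sokal's elementary alternative (extracting $A_n$ with $\|A_n\|\geq 4^n$ and building a Cauchy sequence that violates pointwise boundedness) is also accurate; either route would serve, since all the paper requires is the statement itself in order to pass from strong continuity of a semigroup to local boundedness.
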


\begin{theorem} \label{semigroupbound} \emph{\cite[Proposition 2.2]{isem15}}
Let $T : \left[0,\infty\right) \rightarrow \mathscr{L}(X)$ be a $C_0$-semigroup. Then $\forall t\geq 0$,\par
\begin{enumerate}[label=(\roman*)]
    \item T is \emph{locally bounded}, meaning,
$$\sup_{s\in \left[0,t\right]}\|T(s)\|<\infty$$

    \item There exists constants $M\geq 1 \text{ and }\omega \in \mathbb{R}$ such that the following inequality holds:
$$\|T(t)\|\leq Me^{\omega t}$$
    Where the semigroup $T$ is said to be of \emph{type $(M, \omega)$} if it satisfies the above inequality with the particular constants $M$ and $\omega$.
\end{enumerate}
\end{theorem}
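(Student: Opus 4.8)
\section*{Proof proposal}

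The plan is to prove the two parts in sequence, with part (i) supplying the key input for part (ii). The engine of the whole argument is the interplay between strong continuity, the Uniform Boundedness Principle (\Cref{uniformbound}), and the purely algebraic semigroup relation of \Cref{semigroupdef}. The overall strategy is to first control $\|T(s)\|$ on a small interval near the origin, then propagate that control to all finite times by repeatedly composing the semigroup with itself.

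For part (i), I would begin by establishing boundedness on some interval $[0,\delta]$ and argue by contradiction to get it. If no such $\delta$ and bound existed, there would be a sequence $t_n\to 0^+$ with $\|T(t_n)\|\to\infty$. However, strong continuity forces $T(t_n)f\to T(0)f=f$ for every fixed $f\in X$, so each orbit $\{T(t_n)f\}_n$ is convergent, hence bounded, giving $\sup_n\|T(t_n)f\|<\infty$. The Uniform Boundedness Principle then upgrades this pointwise control to $\sup_n\|T(t_n)\|<\infty$, contradicting the assumed divergence. Thus there exist $\delta>0$ and $M_0\geq 1$ (the choice $M_0\geq 1$ is legitimate since $\|T(0)\|=\|I\|=1$) with $\|T(s)\|\leq M_0$ on $[0,\delta]$. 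To extend to an arbitrary $[0,t]$, I would write any $s\in[0,t]$ as $s=k\delta+r$ with $0\leq r<\delta$ and $k\leq\lceil t/\delta\rceil$, factor $T(s)=T(\delta)^kT(r)$ by the semigroup property, and use submultiplicativity of the operator norm to obtain $\|T(s)\|\leq M_0^{k+1}\leq M_0^{\lceil t/\delta\rceil+1}$, a finite bound independent of $s$.

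For part (ii), I would set $M\coloneqq\sup_{s\in[0,1]}\|T(s)\|$, which is finite by part (i) and satisfies $M\geq 1$, and put $\omega\coloneqq\ln M\geq 0$. Writing an arbitrary $t\geq 0$ as $t=n+r$ with $n\in\mathbb{N}_0$ and $r\in[0,1)$, the semigroup property gives $T(t)=T(1)^nT(r)$, whence $\|T(t)\|\leq M^n\cdot M=Me^{\omega n}\leq Me^{\omega t}$, where the final inequality uses $n\leq t$ and $\omega\geq 0$. This is precisely the claimed estimate of type $(M,\omega)$.

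The main obstacle is the local boundedness near the origin in part (i); everything afterwards is a routine manipulation of the semigroup relation. The nontrivial insight is that pointwise (strong) control of the orbits $T(t_n)f$ is exactly the hypothesis needed to invoke \Cref{uniformbound} and pass to uniform control of the operator norms. Without strong continuity this passage fails, and indeed a general semigroup satisfying only the relations in \Cref{semigroupdef} need not be locally bounded; so the care here lies in isolating where the $C_0$ assumption is genuinely used.
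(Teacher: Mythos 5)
Your proof is correct, but part (i) takes a genuinely different route from the paper's. The paper exploits its (strong) definition of a $C_0$-semigroup in \Cref{semigroupdef} --- each orbit $s\mapsto T(s)f$ is continuous on all of $[0,\infty)$ --- to conclude immediately that $\sup_{s\in[0,t]}\|T(s)f\|<\infty$ for every fixed $f$, and then applies \Cref{uniformbound} once to the entire family $\{T(s)\mid s\in[0,t]\}$; there is no contradiction argument and no propagation step. You instead invoke \Cref{uniformbound} only on a sequence $t_n\to 0^+$ (inside a proof by contradiction) to get a bound $M_0$ on some small interval $[0,\delta]$, and then propagate it to $[0,t]$ by writing $s=k\delta+r$ and using the semigroup identity with submultiplicativity. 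Under the paper's hypotheses your detour is unnecessary work, but it buys genuine robustness: your argument uses continuity of the orbits only at $t=0$, so it proves local boundedness under the weaker, standard definition of a $C_0$-semigroup (strong continuity at the origin only), which is the version found in Engel--Nagel and Pazy, and it yields an explicit exponential-type bound $M_0^{\lceil t/\delta\rceil+1}$ already in part (i). Part (ii) of your proof is essentially the paper's: both decompose $t=n+r$ with $r\in[0,1)$ and estimate via the operator norm; the only difference is that you take $\omega=\ln M\geq 0$ (legitimate since $M\geq\|T(0)\|=1$), while the paper takes $\omega=\ln(\|T(1)\|+1)$ so that the base of the power exceeds $1$ --- both choices close the estimate.
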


\begin{proof}
For a fixed function $f\in X$, $T(\cdot)f$ is continuous on $[0,\infty)$ and, thus, bounded on compact intervals $[0,t]$:
$$\sup_{s\in[0,t]}\|T(s)f\|<\infty$$
Therefore, by \Cref{uniformbound},
$$\implies \sup_{s\in[0,t]}\|T(s)\|<\infty$$
This gives us our first result (i). The second result follows from the first, as we now define:
$$M\coloneqq \sup_{s\in[0,1]}\|T(s)\|<\infty$$
Let $t\geq 0$ be arbitrary and $t = n+r$, where $n\in \mathbb{N}$ and $r\in[0,1)$. This allows us to obtain:
\begin{flalign*}
\|T(t)\|\leq \|T(r)T(1)^n\| \leq M\|T(1)\|^n &\leq M(\|T(1)\|+1)^n\\
                                             &\leq M(\|T(1)\|+1)^t = Me^{\omega t}
\end{flalign*}
 where we set $\omega\coloneqq \ln({\|T(1)\|+1})$, which completes the proof for the second result.
\end{proof}

\begin{definition} \label{generator} \emph{\cite[Definition 2.7]{isem15}}
The infinitesimal generator $A$ of a semigroup $T$ is defined to have domain 

$$\dom{A} \coloneqq \{f\in F \mid T(\cdot)f \text{ is differentiable in} \left[0,\infty\right) \}\subseteq X$$

Furthermore, if $f\in \dom{A}$, then:

$$Af\coloneqq \frac{\mathrm{d}}{\mathrm{d}t}T(t)f\at[\bigg]{t=0} = \lim_{h\to 0^+}\frac{T(h)f - f}{h}$$

\end{definition}

We will now present some important properties of the infinitesimal generator of one-parameter semigroups.

\begin{theorem} \label{generatorprops} \emph{\cite[Proposition 2.9]{isem15}} 
Let $T : [0,\infty)\rightarrow \mathscr{L}(X)$ be a $C_0$-semigroup in $X$ and $A : \dom{A}\rightarrow X$ be its infinitesimal generator. Then:

\begin{enumerate}[label=(\roman*)]
    \item \textit{$A\text{ is a \emph{linear operator} in }\dom{A}$.}
    \item \textit{for $f\in X$}
    $$\int_0^t T(s)f\hspace{1mm}ds \in \dom{A}\text{\hspace{2mm} and \hspace{2mm}} T(t)f-f = A\left(\int_0^t T(s)f\hspace{1mm} ds\right)$$

    \item \textit{For $f\in \dom{A}\text{ then we have that }T(t)f\in \dom{A}$ and,}
    $$\frac{\mathrm{d}}{\mathrm{d}t}T(t)f = AT(t)f = T(t)Af$$
\end{enumerate}
\end{theorem}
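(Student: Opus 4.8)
The plan is to treat the three assertions in turn, each time reducing to the definition of $A$ as the strong right-derivative at $t=0$ of the orbit map $t\mapsto T(t)f$, combined with the semigroup law and strong continuity. Throughout I would exploit that every $T(h)$ is a bounded linear operator, so that it commutes both with finite linear combinations and with Bochner integration.

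For (i), linearity is immediate from the linearity of each $T(h)$ and of limits. Given $f,g\in\dom{A}$ and scalars $\alpha,\beta$, I would write $\frac{T(h)(\alpha f+\beta g)-(\alpha f+\beta g)}{h}=\alpha\frac{T(h)f-f}{h}+\beta\frac{T(h)g-g}{h}$ and let $h\to0^+$; the right-hand side converges to $\alpha Af+\beta Ag$, which simultaneously shows $\alpha f+\beta g\in\dom{A}$ and that $A$ respects linear combinations, so that $\dom{A}$ is a subspace on which $A$ acts linearly.

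For (ii), I would apply the difference quotient of $A$ directly to the vector $v\coloneqq\int_0^t T(s)f\,ds$. Since each $T(h)$ is a bounded linear operator it commutes with the Bochner integral, so $T(h)v=\int_0^t T(h+s)f\,ds$; the substitution $u=s+h$ then gives $T(h)v=\int_h^{t+h}T(u)f\,du$. Subtracting $v$ and dividing by $h$ collapses the overlapping integrals to $\frac{1}{h}\int_t^{t+h}T(u)f\,du-\frac{1}{h}\int_0^h T(u)f\,du$. Because $u\mapsto T(u)f$ is continuous (strong continuity of the semigroup), a fundamental-theorem-of-calculus argument sends the first average to $T(t)f$ and the second to $T(0)f=f$ as $h\to0^+$. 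Hence the limit exists, which proves $v\in\dom{A}$, and equals $T(t)f-f=Av$, the claimed identity.

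For (iii), the right-derivative is the easy half: using the semigroup property $T(h)T(t)=T(t)T(h)=T(t+h)$ I would write $\frac{T(h)-I}{h}T(t)f=T(t)\frac{T(h)-I}{h}f$, and since $T(t)$ is bounded it passes through the limit, giving $T(t)f\in\dom{A}$ with $AT(t)f=T(t)Af$; reading the same quotient as $\frac{T(t+h)-T(t)}{h}f$ identifies this common value as the right-derivative of $T(t)f$. The main obstacle is upgrading this to a genuine two-sided derivative, that is, matching the left-derivative $\lim_{h\to0^+}T(t-h)\frac{T(h)f-f}{h}$. I would split $T(t-h)\frac{T(h)f-f}{h}-T(t)Af$ as $T(t-h)\bigl[\frac{T(h)f-f}{h}-Af\bigr]+\bigl[T(t-h)-T(t)\bigr]Af$: the first bracket tends to $0$ while $T(t-h)$ remains uniformly bounded for small $h$ by local boundedness (\Cref{semigroupbound}(i)), and the second term tends to $0$ by strong continuity. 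Thus the left- and right-derivatives agree, the derivative exists, and all three expressions coincide.
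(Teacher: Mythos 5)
The paper gives no proof of this theorem at all — it is quoted as background, with a citation to Proposition 2.9 of the ISEM lecture notes — so there is no internal argument to compare yours against. Your proof is correct and is the standard one for this proposition: linearity of $A$ from linearity of each $T(h)$ and of limits; the telescoping of $\int_h^{t+h}T(u)f\,du-\int_0^t T(u)f\,du$ into the two averages $\frac{1}{h}\int_t^{t+h}T(u)f\,du$ and $\frac{1}{h}\int_0^h T(u)f\,du$, handled by strong continuity; and, for the two-sided derivative in (iii), the splitting $T(t-h)\bigl[\frac{T(h)f-f}{h}-Af\bigr]+\bigl[T(t-h)-T(t)\bigr]Af$ controlled by local boundedness (\Cref{semigroupbound}) and strong continuity. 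One point worth stating explicitly: \Cref{generator} defines $\dom{A}$ as the set of $f$ whose orbit is differentiable on all of $[0,\infty)$, not merely right-differentiable at $0$, so in (ii) you should note that $T(s)v=\int_s^{s+t}T(u)f\,du$ is differentiable at every $s$ by the same fundamental-theorem argument (or by feeding the $t=0$ quotient into the argument of (iii)) before concluding $v\in\dom{A}$; your (iii) already establishes full differentiability, so no real repair is needed.
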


We will now introduce a further classification of one-parameter semigroups, known as \emph{uniformly continuous semigroups}, which will be significant to this study.

\begin{theorem} \label{uniformsemigroup} \emph{\cite[Definition 25]{carvalhoneto2021}} 
A uniformly continuous semigroup is a \emph{strongly continuous one-parameter semigroup} $T$ such that:
$$\lim_{t\to0^+}\|T(t)-I\| = 0$$
and can be expressed as,
$$T(t) = e^{At}$$
where, $A$, its infinitesimal generator, is bounded and defined to have a domain $\dom{A} = X$.
Conversely, an operator $A : X\rightarrow X$ is the generator of a uniformly continuous semigroup given by:
$$T(t)\coloneqq e^{At}$$
if and only if $A$ is a \emph{bounded linear operator}.

\end{theorem}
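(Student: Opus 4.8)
The plan is to prove the biconditional by treating the two implications separately, since the ``if'' direction is essentially a power-series computation while the ``only if'' direction requires a genuine construction of the generator. Throughout I will use that $\mathscr{L}(X)$ is itself a Banach space under the operator norm, together with the structural facts about generators collected in \Cref{generatorprops}.

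For the ``if'' direction, assume $A\in\mathscr{L}(X)$ is bounded and set $T(t):=e^{At}:=\sum_{n=0}^\infty \frac{t^n A^n}{n!}$. First I would observe that, since $\|A^n\|\leq\|A\|^n$, the partial sums are Cauchy in $\mathscr{L}(X)$ (being dominated by $\sum_n \frac{t^n\|A\|^n}{n!}=e^{\|A\|t}$), so the series converges in operator norm and each $T(t)$ is a well-defined bounded operator. The semigroup property $T(t+s)=T(t)T(s)$ would then follow from the Cauchy product of the two absolutely convergent series and the binomial theorem, which is legitimate because $A$ commutes with itself; $T(0)=I$ is immediate. Uniform continuity follows from $\|T(t)-I\|\leq\sum_{n=1}^\infty\frac{t^n\|A\|^n}{n!}=e^{\|A\|t}-1\to 0$, and the analogous tail estimate $\bigl\|\frac{T(h)-I}{h}-A\bigr\|\leq\sum_{n=2}^\infty\frac{h^{n-1}\|A\|^n}{n!}=\frac{1}{h}\bigl(e^{\|A\|h}-1-\|A\|h\bigr)\to 0$ shows that $A$ is the infinitesimal generator with $\dom{A}=X$.

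For the ``only if'' direction, assume $T$ is uniformly continuous; the key idea is to recover a bounded generator by an averaging and invertibility trick. Because $T(s)\to I$ in operator norm as $s\to 0^+$, I can choose $\rho>0$ small enough that $\bigl\|\frac{1}{\rho}\int_0^\rho T(s)\,ds-I\bigr\|<1$, whence $\frac{1}{\rho}\int_0^\rho T(s)\,ds$, and therefore $\int_0^\rho T(s)\,ds$, is invertible in $\mathscr{L}(X)$ via a Neumann series. Using the semigroup property and a change of variables I would then compute $\frac{T(h)-I}{h}\int_0^\rho T(s)\,ds=\frac{1}{h}\int_\rho^{\rho+h}T(s)\,ds-\frac{1}{h}\int_0^h T(s)\,ds$ and let $h\to 0^+$; continuity of $s\mapsto T(s)$ yields the operator-norm limit $T(\rho)-I$, so that $\frac{T(h)-I}{h}\to(T(\rho)-I)\bigl(\int_0^\rho T(s)\,ds\bigr)^{-1}=:A$ in operator norm, exhibiting the generator $A$ as a bounded operator with $\dom{A}=X$. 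To conclude $T(t)=e^{At}$, I would invoke the ``if'' direction so that $e^{At}$ is a uniformly continuous semigroup with the same generator $A$, then fix $t$ and differentiate $\tau\mapsto e^{A(t-\tau)}T(\tau)$ on $[0,t]$: the product rule together with the commutation relation $AT(\tau)=T(\tau)A$ from \Cref{generatorprops}(iii) (valid for all $\tau$ since $\dom{A}=X$) makes the derivative vanish, so the map is constant and $T(t)=e^{At}$.

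I expect the main obstacle to be this forward direction, and specifically the averaging step: it is precisely uniform (rather than merely strong) continuity that permits choosing $\rho$ making $\frac{1}{\rho}\int_0^\rho T(s)\,ds$ norm-close to $I$ and hence invertible, which is what upgrades the difference quotient to converge in the operator norm and forces the generator to be bounded. The remaining care lies in justifying that the operator-valued Bochner integral behaves like its scalar analogue, in particular that the fundamental-theorem-of-calculus limits $\frac{1}{h}\int_\rho^{\rho+h}T(s)\,ds\to T(\rho)$ hold in operator norm, which rests on continuity of $T$ and the standard properties of the integral.
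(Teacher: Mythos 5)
Your proof is correct, but there is nothing in the paper to compare it against line by line: \Cref{uniformsemigroup} is quoted from the cited reference and used as a black box, with no proof supplied in the paper itself. The closest analogue is the paper's proof of \Cref{generatoranalytic}, which establishes exactly your ``if'' direction (indeed in the stronger, analytic form with complex parameter): the dominating series $e^{|z|\|A\|}$, the Cauchy-product/binomial verification of the semigroup law, the tail estimate giving $\|T(z)-I\|\to 0$, and the difference-quotient estimate identifying the generator as $A$; your estimates are the same ones restricted to real $t$. What you add --- and what appears nowhere in the paper --- is the ``only if'' direction, via the classical averaging argument: choose $\rho$ with $\left\|\frac{1}{\rho}\int_0^\rho T(s)\,ds-I\right\|<1$, invert by a Neumann series, and use the identity $\frac{T(h)-I}{h}\int_0^\rho T(s)\,ds=\frac{1}{h}\int_\rho^{\rho+h}T(s)\,ds-\frac{1}{h}\int_0^h T(s)\,ds$ to force norm convergence of the difference quotient to a bounded operator $A$. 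This is sound; the only points worth making explicit in a write-up are (i) that the limit $\frac{1}{h}\int_\rho^{\rho+h}T(s)\,ds\to T(\rho)$ needs norm continuity of $T$ at $\rho$, which follows from the hypothesis via $\|T(\rho+h)-T(\rho)\|\leq\|T(\rho)\|\,\|T(h)-I\|$, and (ii) that in your final uniqueness step the commutation $Ae^{A(t-\tau)}=e^{A(t-\tau)}A$ holds because the exponential is a norm limit of polynomials in $A$, which together with $AT(\tau)=T(\tau)A$ from \Cref{generatorprops}(iii) (valid here since $\dom{A}=X$) makes the derivative of $\tau\mapsto e^{A(t-\tau)}T(\tau)$ vanish. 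In short, your route subsumes what the paper does: the same computation for the easy half, plus the converse half that the paper delegates entirely to its citation.
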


Finally, we show that the fractional integral is a bounded linear operator when mapping from $L^p(x_0,x_1;X)$ to $L^p(x_0,x_1;X)$:

\begin{theorem} \label{fracintbound}\emph{\cite[Theorems 11, 12]{carvalhoneto2021} (Boundedness of Riemann-Liouville integral)}
The Riemann-Liouville fractional integral operator is a bounded linear operator from $L^p(x_0,x_1;X)$ into itself for all $1\leq p\leq \infty$ and its bound is given by:
$$\|_{x_0}J^\alpha_xf\|_{L^p(x_0,x_1;X)}\leq \left[\frac{(x_1 - x_0)^\alpha}{\Gamma(\alpha+1)}\right]\|f\|_{L^p(x_0,x_1;X)}$$
\end{theorem}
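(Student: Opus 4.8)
The plan is to recognise the Riemann-Liouville integral as a convolution against the nonnegative kernel $g(s) = s^{\alpha-1}/\Gamma(\alpha)$ and, for finite $p$, to reduce the estimate to a single application of \Cref{C48}, which is essentially Minkowski's integral inequality specialised to exactly this situation. Linearity of ${}_{x_0}J^\alpha_x$ is immediate from linearity of the Bochner integral, so all the content lies in the norm bound. I would start by substituting $s = x - t$ in the defining integral and writing $x = r + x_0$ with $r \in [0, x_1 - x_0]$, which gives
$$ {}_{x_0}J^\alpha_x f(r + x_0) = \int_0^r g(s)\, f(r + x_0 - s)\, ds. $$
This is precisely the integrand occurring in \Cref{C48} with $t_0 = x_0$, $t_1 = x_1$ and $h = 0$; its hypotheses hold because $g$ is nonnegative and locally Lebesgue integrable (the singularity $s^{\alpha-1}$ is integrable near the origin as $\alpha > 0$), and because $f \in L^p(x_0, x_1; X)$ is Bochner integrable on the bounded interval (here $L^p \subseteq L^1$ since $p \geq 1$).

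For $1 \leq p < \infty$ I would then pass the $X$-norm inside the integral via the Bochner triangle inequality $\| \int \cdots \|_X \leq \int \| \cdots \|_X$ and rewrite the $L^p$-norm using the same substitution in the outer variable, obtaining
$$ \|{}_{x_0}J^\alpha_x f\|_{L^p(x_0, x_1; X)} \leq \left[ \int_0^{x_1 - x_0} \left[ \int_0^r g(s)\, \|f(r + x_0 - s)\|_X\, ds \right]^p dr \right]^{1/p}. $$
Applying \Cref{C48} interchanges the roles of the two integrations and bounds the right-hand side by $\int_0^{x_1 - x_0} g(s) \left[ \int_s^{x_1 - x_0} \|f(r + x_0 - s)\|_X^p\, dr \right]^{1/p} ds$. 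The inner integral becomes $\int_{x_0}^{x_1 - s} \|f(u)\|_X^p\, du$ after setting $u = r + x_0 - s$, and I would bound this simply by $\|f\|_{L^p}^p$ by enlarging the domain back to $[x_0, x_1]$. This decouples $f$ from the kernel and leaves
$$ \|{}_{x_0}J^\alpha_x f\|_{L^p} \leq \|f\|_{L^p} \int_0^{x_1 - x_0} \frac{s^{\alpha-1}}{\Gamma(\alpha)}\, ds = \frac{(x_1 - x_0)^\alpha}{\alpha \Gamma(\alpha)} \|f\|_{L^p}, $$
and the identity $\alpha \Gamma(\alpha) = \Gamma(\alpha+1)$ produces the stated constant.

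The case $p = \infty$ I would handle directly instead of through \Cref{C48}. For almost every $x$ one has the pointwise estimate $\|{}_{x_0}J^\alpha_x f(x)\|_X \leq \|f\|_{L^\infty} \int_0^{x - x_0} g(s)\, ds \leq \frac{(x_1 - x_0)^\alpha}{\Gamma(\alpha+1)} \|f\|_{L^\infty}$, and taking the essential supremum over $x$ closes this case with the same constant.

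I expect the genuine difficulty to be minimal once \Cref{C48} is invoked, since that result already encapsulates the Minkowski-type interchange that does the real work. The parts requiring care are therefore bookkeeping rather than analysis: lining up the limits of the change of variables with the indices $r$, $s$ and the upper limit $x_1 - x_0$ of \Cref{C48}, justifying that enlarging the inner domain of integration to $[x_0, x_1]$ only increases the integral (the integrand being nonnegative), and treating $p = \infty$ separately so as not to strain the interpretation of the bracketed $L^p$ expressions at $p = \infty$. A final point worth checking is that every integral in sight is finite, which again comes down to the local integrability of $s^{\alpha-1}$ near $0$, valid exactly because $\alpha > 0$.
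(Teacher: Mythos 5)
Your proposal is correct and follows essentially the same route as the paper's own proof: the same substitutions $s = x-t$ and $x = r+x_0$, the same single application of \Cref{C48} to perform the Minkowski-type interchange, and the same final change of variables and identity $\alpha\Gamma(\alpha)=\Gamma(\alpha+1)$. The only difference is that you handle $p=\infty$ separately by a direct pointwise estimate and essential supremum, which is in fact slightly more careful than the paper, whose displayed chain of $p$-th power integrals is written as if $p$ were finite throughout.
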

\begin{proof}
We will present this proof in the form of several algebraic manipulations. Firstly, we define a dummy variable $s$ such that $s=x-t$, which allows us to obtain:
\begin{flalign*}
\Gamma(\alpha)\|_{x_0}J^\alpha_xf\|_{L^p(x_0,x_1;X)}&=\Gamma(\alpha)\left[\int_{x_0}^{x_1}\|_{x_0}J^\alpha_xf(x)\|_X^pdx\right]^\frac{1}{p}\\
                                                                 &\leq \left[\int_{x_0}^{x_1}\left[\int_{x_0}^{x}(x-t)^{\alpha-1}\|f(t)\|_Xdt\right]^pdx\right]^\frac{1}{p}\\
                                                                 &=\left[\int_{x_0}^{x_1}\left[\int_{0}^{x-x_0}s^{\alpha-1}\|f(x-s)\|_Xds\right]^pdx\right]^\frac{1}{p} 
\end{flalign*}
Now we define $r$ such that $x = r+x_0$ and by applying Theorem \ref{C48} we obtain:

\begin{flalign*}
\Gamma(\alpha)\|_{x_0}J^\alpha_xf\|_{L^p(x_0,x_1;X)}&\leq\left[\int_0^{x_1-x_0}\left[\int_0^rs^{\alpha-1}\|f(r+x_0-s)\|_Xds\right]^pdr\right]^\frac{1}{p}\\
                                                                 &\leq\int_0^{x_1-x_0}\left[\int_0^{x_1-x_0}\|f(r+x_0-s)\|_X^pdr\right]^\frac{1}{p}ds
\end{flalign*}

Finally, by defining $l$ such that $r = l+s-t_0$ we acquire:
\begin{flalign*}
\Gamma(\alpha)\|_{x_0}J^\alpha_xf\|_{L^p(x_0,x_1;X)}&\leq\int_0^{x_1-x_0}s^{\alpha-1}\left[\int_{x_0}^{x_1-s}\|f(l)\|_X^pdl\right]^\frac{1}{p}ds\\
                                                                 &\leq\left[\int_0^{x_1-x_0}s^{\alpha-1}ds\right]\left[\int_{x_0}^{x_1-s}\|f(l)\|_X^pdl\right]^\frac{1}{p}\\
                                                                 &=\left[\frac{(x_1-x_0)^\alpha}{\alpha}\right]\|f\|_{L^p(x_0,x_1;X)}
\end{flalign*}
$$\implies\|_{x_0}J^\alpha_xf\|_{L^p(x_0,x_1;X)}\leq\left[\frac{(x_1 - x_0)^\alpha}{\Gamma(\alpha+1)}\right]\|f\|_{L^p(x_0,x_1;\mathbb{X})}$$

\end{proof}
Thus, by \Cref{uniformsemigroup}, this ensures that the fractional integral is the infinitesimal generator of a unique one-parameter semigroup.

We present a few results regarding Gamma functions from:

\begin{lemma} \label{stirling} \emph{\cite[Equation 5.11.3]{gammasymp} (Stirling's approximation)}
$$\Gamma(z)\sim\sqrt{\frac{2\pi}{z}}\left(\frac{z}{e}\right)^z$$ 
\end{lemma}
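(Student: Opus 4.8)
The plan is to establish the asymptotic via \emph{Laplace's method} applied to the Euler integral representation $\Gamma(z+1)=\int_0^\infty t^z e^{-t}\,dt$, working first for real $z\to+\infty$ (the regime in which we later apply it). First I would rescale the integration variable by setting $t=zu$, which concentrates the integrand around a point independent of $z$, yielding
$$\Gamma(z+1)=z^{z+1}\int_0^\infty e^{z(\ln u-u)}\,du.$$
Writing $g(u)=\ln u-u$, a single interior maximum occurs where $g'(u)=u^{-1}-1=0$, i.e.\ at $u=1$, with $g(1)=-1$ and $g''(1)=-1$. Laplace's heuristic then predicts that essentially all the mass of the integral comes from a shrinking neighbourhood of $u=1$ and is governed by the local quadratic behaviour of $g$ there.

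Next I would make this rigorous by splitting $[0,\infty)$ into an inner window $|u-1|\le\delta$ and its complement. On the inner window I would Taylor expand $g(u)=-1-\tfrac12(u-1)^2+O(|u-1|^3)$, substitute $u-1=v/\sqrt z$, and recognise a Gaussian integral: after factoring out $e^{-z}$, the inner contribution is $z^{z+1}e^{-z}\cdot z^{-1/2}\int e^{-v^2/2+o(1)}\,dv$, which tends to $z^{z+1/2}e^{-z}\sqrt{2\pi}$ as $z\to\infty$. On the outer region I would use that $g$ is strictly increasing for $0<u<1$ and strictly decreasing for $u>1$, so $g(u)\le g(1\pm\delta)=-1-\eta$ for some $\eta=\eta(\delta)>0$ on the bounded part, giving a factor $e^{-\eta z}$ that is exponentially smaller than the inner term; for the far tail $u\to\infty$ the crude bound $\ln u-u\le -u/2$ for large $u$ makes $\int u^z e^{-zu}\,du$ integrable and negligible. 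Dividing by $z$ and using $\Gamma(z)=\Gamma(z+1)/z$ then produces $\Gamma(z)\sim z^{z-1/2}e^{-z}\sqrt{2\pi}=\sqrt{2\pi/z}\,(z/e)^z$, as claimed.

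The main obstacle is the bookkeeping in the inner window: one must justify interchanging the limit $z\to\infty$ with the rescaled integral (e.g.\ by dominated convergence, after verifying that the cubic remainder in the Taylor expansion is controlled uniformly and choosing $\delta$ small enough that $g(u)+1$ stays comparable to $-(u-1)^2$), and confirm that the $o(1)$ errors do not accumulate. Everything else — the tail estimates and the elementary Gaussian integral $\int_{-\infty}^\infty e^{-v^2/2}\,dv=\sqrt{2\pi}$ — is routine. If one wishes to extend the statement to complex $z$ in a sector $|\arg z|\le\pi-\delta$, the same analysis carries over after deforming the contour so that it crosses the saddle in the direction of steepest descent.
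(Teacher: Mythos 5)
The paper never actually proves this lemma: it is imported as a known fact from \cite[Equation 5.11.3]{gammasymp} and used as a black box (in the computation of the spectral radius of ${_{x_0}J^\alpha_x}$), so there is no internal argument to compare yours against — your write-up is a genuine addition rather than an alternative route. Your Laplace-method proof is the standard self-contained derivation and the outline is correct: the rescaling $t=zu$ giving $\Gamma(z+1)=z^{z+1}\int_0^\infty e^{z(\ln u-u)}\,du$, the saddle at $u=1$ with $g(1)=-1$ and $g''(1)=-1$, the inner window with $u-1=v/\sqrt{z}$ producing the Gaussian factor $\sqrt{2\pi}$, and dominated convergence (choosing $\delta$ small enough that, say, $g(u)+1\le-\tfrac14(u-1)^2$ on $|u-1|\le\delta$, so the rescaled integrand is dominated by $e^{-v^2/4}$) to control the cubic remainder. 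One bookkeeping correction: your far-tail bound $\ln u-u\le-u/2$ gives $\int_U^\infty e^{-zu/2}\,du=\tfrac{2}{z}e^{-zU/2}$, which is exponentially smaller than the main term $e^{-z}$ only when $U>2$; so the outer region must be split as $[1+\delta,U]\cup[U,\infty)$ with a fixed $U>2$, using monotonicity of $g$ (hence $g\le g(1+\delta)<-1$) on the bounded piece and the crude bound only beyond $U$ — as written, applying the crude bound immediately past $1+\delta$ would not close the estimate. Finally, note that the paper applies the lemma only to real arguments of the form $\alpha n+1$ with $n\to\infty$, so your restriction to real $z\to+\infty$ suffices for every use in the text, and the steepest-descent extension to sectors is optional polish.
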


\begin{lemma} \label{gammafrac} \emph{\cite[Equation 5.11.12]{gammasymp}}
$$\frac{\Gamma(z+a)}{\Gamma(z)}\sim z^a$$ 
\end{lemma}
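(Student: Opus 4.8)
The plan is to reduce the entire statement to Stirling's approximation (\Cref{stirling}). Since the asymptotic relation $\sim$ means precisely that the ratio of the two sides tends to $1$ as $z\to\infty$, and since such relations may be combined by multiplication and division (provided the denominators are eventually nonzero, which holds here), I would begin by writing both $\Gamma(z+a)$ and $\Gamma(z)$ in their Stirling forms and taking the quotient.

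Applying \Cref{stirling} to numerator and denominator, and using the equivalent form $\Gamma(z)\sim\sqrt{2\pi}\,z^{z-1/2}e^{-z}$ obtained by absorbing the square root into the power, gives
$$\frac{\Gamma(z+a)}{\Gamma(z)}\sim\frac{\sqrt{2\pi}\,(z+a)^{z+a-1/2}e^{-(z+a)}}{\sqrt{2\pi}\,z^{z-1/2}e^{-z}}=\frac{(z+a)^{z+a-1/2}}{z^{z-1/2}}\,e^{-a}.$$
The goal is then to extract a clean factor of $z^a$ from the remaining expression. To do so, I would factor $z$ out of the base $z+a$, writing $(z+a)^{z+a-1/2}=z^{z+a-1/2}\left(1+a/z\right)^{z+a-1/2}$, and cancel $z^{z-1/2}$, which leaves
$$\frac{\Gamma(z+a)}{\Gamma(z)}\sim z^{a}\left(1+\frac{a}{z}\right)^{z+a-1/2}e^{-a}.$$

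It therefore suffices to show that the correction factor $\left(1+a/z\right)^{z+a-1/2}e^{-a}$ tends to $1$. This is the crux of the argument: taking logarithms and using $\log(1+a/z)=a/z+O(1/z^2)$, I would compute $(z+a-1/2)\log(1+a/z)=a+O(1/z)\to a$, so that $\left(1+a/z\right)^{z+a-1/2}\to e^{a}$. The factor $e^{a}$ then exactly cancels the $e^{-a}$ coming from the ratio of the exponential terms in Stirling's formula, and we obtain $\Gamma(z+a)/\Gamma(z)\sim z^{a}$ as claimed.

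I expect the only genuine subtlety to be this cancellation of exponentials. The term $(z+a)^{z+a-1/2}$ secretly contributes an extra $e^{a}$ through the limit $\left(1+a/z\right)^{z}\to e^{a}$, and one must carry the exponent $z+a-1/2$ rather than just $z$ to confirm that the lower-order pieces $\left(1+a/z\right)^{a-1/2}\to 1$ do not disturb the limit. Once this is handled, everything else is routine manipulation of asymptotic equivalences.
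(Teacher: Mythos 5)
Your proof is correct, but the comparison here is one-sided: the paper does not prove \Cref{gammafrac} at all — it imports it directly from the DLMF, exactly as it imports Stirling's formula. What you have done is collapse one citation into the other, deriving \Cref{gammafrac} as a consequence of \Cref{stirling}, which makes the preliminaries more self-contained. Your treatment of the one genuine subtlety is right: factoring $(z+a)^{z+a-1/2}=z^{z+a-1/2}(1+a/z)^{z+a-1/2}$ and verifying via $(z+a-1/2)\log(1+a/z)=a+O(1/z)$ that the correction factor contributes exactly $e^{a}$, cancelling the $e^{-a}$ left over from the ratio of exponentials; dropping the exponent down to $z$ alone, or forgetting this cancellation, is the standard way this computation goes wrong, and you avoid it. As for what each route buys: your derivation is elementary and keeps the logical dependencies internal to the paper, but it is only as strong as the form of Stirling you start from — as written it is cleanest for real $z\to+\infty$, where $1+a/z>0$ and the powers $z^{z+a-1/2}$ raise no branch issues. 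That is all the paper ever needs, since \Cref{mittaglefflerbound} and \Cref{alphalocalipschitz} invoke the lemma with $z=\alpha k+1\to\infty$ along the reals. The cited DLMF statement is stronger in a direction the paper does not use: it holds uniformly on complex sectors $|\arg z|\le\pi-\delta$ and comes with quantitative error terms. If your argument were folded into the paper, it would be worth one sentence justifying the division of asymptotic equivalences — namely that $z\to\infty$ forces $z+a\to\infty$, so Stirling is applied to numerator and denominator each within its regime of validity.
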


\begin{remark} \label{gammamin}
    From numerical computation, we also find that for $x>0$, $\min\{\Gamma(x)|x\in\mathbb{R}^+\}\approx0.885603$ with $x\approx1.46163$. We henceforth define $\Gammamin\coloneqq\min\{\Gamma(x)|x\in\mathbb{R}^+\}$, and $\xmin\coloneqq\Gamma^{-1}(\Gammamin)$.
\end{remark}

We also define the \emph{digamma} function as $\psi(z)\coloneqq \frac{\Gamma'(z)}{\Gamma(z)}$. We have:

\begin{lemma} \label{digammaseries} \emph{\cite[Equation 25]{digamma}}
    For integer $n\in\mathbb{Z}$,
    $$\psi(n)=-\gamma+\sum^{n-1}_{k=1}\frac{1}{k}$$
\end{lemma}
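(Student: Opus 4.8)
The plan is to prove the identity by induction on $n$, using the recurrence relation satisfied by the digamma function; throughout I take $n$ to be a positive integer, which is the only range in which the right-hand side (and $\psi(n)$ itself) is meaningful. The engine of the argument is the fundamental functional equation $\Gamma(z+1) = z\Gamma(z)$. Taking logarithms gives $\ln\Gamma(z+1) = \ln z + \ln\Gamma(z)$, and differentiating both sides with respect to $z$ yields, since $\psi = (\ln\Gamma)'$ by the definition of the digamma function, the recurrence
$$\psi(z+1) = \psi(z) + \frac{1}{z}.$$
This reduces the whole problem to pinning down a single value, namely the base case $\psi(1) = -\gamma$, after which the claimed formula follows simply by accumulating the increments $1/k$.

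To make the induction precise: assuming $\psi(n) = -\gamma + \sum_{k=1}^{n-1}\frac{1}{k}$ holds for some positive integer $n$, the recurrence with $z = n$ gives $\psi(n+1) = \psi(n) + \frac{1}{n} = -\gamma + \sum_{k=1}^{n}\frac{1}{k}$, which is exactly the statement for $n+1$. The base case $n = 1$ is the assertion $\psi(1) = -\gamma$, with the empty sum read as zero.

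I expect the base case to be the only genuine obstacle, since it is the one step that is not purely algebraic: it requires an explicit analytic representation of $\Gamma$. The cleanest route is the Weierstrass product
$$\frac{1}{\Gamma(z)} = z\, e^{\gamma z}\prod_{k=1}^{\infty}\left(1 + \frac{z}{k}\right)e^{-z/k}.$$
Logarithmic differentiation of this product (justifying termwise differentiation by the local uniform convergence of the resulting series) gives the representation
$$\psi(z) = -\gamma - \frac{1}{z} + \sum_{k=1}^{\infty}\left(\frac{1}{k} - \frac{1}{k+z}\right),$$
and setting $z = 1$ collapses the telescoping series $\sum_{k=1}^{\infty}\bigl(\frac{1}{k} - \frac{1}{k+1}\bigr)$ to $1$, leaving $\psi(1) = -1 - \gamma + 1 = -\gamma$ as required.

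Finally, I note that this same series representation proves the lemma in one shot, bypassing the induction altogether: evaluating at a positive integer $z = n$ and reindexing the partial sums shows that $\sum_{k=1}^{\infty}\bigl(\frac{1}{k} - \frac{1}{k+n}\bigr) = \sum_{k=1}^{n}\frac{1}{k}$, whence $\psi(n) = -\gamma - \frac{1}{n} + \sum_{k=1}^{n}\frac{1}{k} = -\gamma + \sum_{k=1}^{n-1}\frac{1}{k}$. Either presentation works, and I would lead with the short inductive proof while relegating the convergence bookkeeping of the Weierstrass product to the justification of the base case.
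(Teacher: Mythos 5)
Your proof is correct. Note first that the paper itself offers no proof of this lemma: it is quoted verbatim from the reference \cite[Equation 25]{digamma}, so there is no internal argument to compare against; your write-up supplies the standard derivation that the paper leaves to the literature. Both of your routes are sound: the recurrence $\psi(z+1)=\psi(z)+\tfrac{1}{z}$ follows correctly from logarithmic differentiation of $\Gamma(z+1)=z\Gamma(z)$, the Weierstrass product gives the series
$$\psi(z)=-\gamma-\frac{1}{z}+\sum_{k=1}^{\infty}\left(\frac{1}{k}-\frac{1}{k+z}\right),$$
and the evaluation $\psi(1)=-\gamma$ (or the direct telescoping at $z=n$) is carried out correctly. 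One point in your favour that is worth keeping explicit in any final version: the paper states the lemma for $n\in\mathbb{Z}$, but $\psi$ has poles at the non-positive integers and the sum $\sum_{k=1}^{n-1}\tfrac{1}{k}$ is only meaningful for $n\geq 1$, so your restriction to positive integers is not a simplification but a necessary correction of the statement as printed. The only cosmetic suggestion is that the one-shot series evaluation makes the induction redundant; since the Weierstrass product must be invoked for the base case anyway, leading with the direct argument would be shorter, though either presentation is complete.
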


\begin{lemma} \label{digammasymp} \emph{\cite[Equation 16]{digamma}}
    As $x\to\infty$,

    $$\psi(x)\sim\ln x$$
\end{lemma}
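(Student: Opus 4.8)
The plan is to read off the leading behaviour of $\psi$ from Stirling's approximation and then to secure that heuristic with a rigorous argument anchored on the integers. For the heuristic, I would take logarithms in \Cref{stirling}, turning the multiplicative equivalence $\Gamma(z)\sim\sqrt{2\pi/z}\,(z/e)^z$ into the additive statement
$$\ln\Gamma(z) = z\ln z - z - \tfrac{1}{2}\ln z + \tfrac{1}{2}\ln(2\pi) + o(1)\qquad(z\to\infty).$$
Since $\psi(z)=\Gamma'(z)/\Gamma(z)=\frac{\mathrm d}{\mathrm dz}\ln\Gamma(z)$ by definition, differentiating the right-hand side termwise gives the formal prediction
$$\psi(z) = \ln z + 1 - 1 - \frac{1}{2z} = \ln z - \frac{1}{2z},$$
so that $\psi(z)/\ln z\to 1$, i.e.\ $\psi(z)\sim\ln z$.

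The main obstacle is that this termwise differentiation is not justified: an asymptotic relation $f\sim g$ does not in general transfer to $f'\sim g'$, and the $o(1)$ remainder above gives no control whatsoever over its own derivative. To convert the heuristic into a proof I would therefore pin down $\psi$ exactly on the integers and interpolate. \Cref{digammaseries} gives $\psi(n) = -\gamma + \sum_{k=1}^{n-1}\frac1k = -\gamma + H_{n-1}$, and inserting the classical harmonic asymptotic $H_{n-1} = \ln n + \gamma + o(1)$ yields $\psi(n) = \ln n + o(1)$, which is the desired relation along the integer sequence.

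To extend this to all real $x\to\infty$ I would use that $\psi$ is strictly increasing on $(0,\infty)$ — a standard fact equivalent to the convexity of $\ln\Gamma$, or to the positivity of the trigamma series $\psi'(x)=\sum_{k\ge0}(x+k)^{-2}>0$. Writing $n=\lfloor x\rfloor$, monotonicity gives $\psi(n)\le\psi(x)\le\psi(n+1)$; both endpoints equal $\ln n + o(1)$ by the previous paragraph, while $\ln x = \ln n + O(1/n)$ for $x\in[n,n+1)$, so the squeeze delivers $\psi(x) = \ln x + o(1)$ and hence $\psi(x)\sim\ln x$. The only quantity that must be shown to be harmless in this interpolation is the jump between consecutive integer values, and this is controlled by the recurrence $\psi(x+1)-\psi(x)=1/x\to0$, obtained by differentiating $\ln\Gamma(x+1)=\ln x+\ln\Gamma(x)$. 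In summary, the Stirling computation identifies the candidate limit $\ln z$, and \Cref{digammaseries} together with monotonicity turns that candidate into a rigorous asymptotic; I expect the differentiability and monotonicity justifications to be the only genuinely delicate points.
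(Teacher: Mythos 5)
Your proposal is correct, but there is no internal proof to compare it against: the paper states this lemma as a citation (to the reference \emph{\cite{digamma}}) and never proves it, so your argument is genuinely additional content. Your route is sound and uses only ingredients the paper already has on hand: \Cref{digammaseries} gives $\psi(n)=-\gamma+H_{n-1}$, the defining property of the Euler--Mascheroni constant gives $H_{n-1}=\ln n+\gamma+o(1)$, hence $\psi(n)=\ln n+o(1)$ on the integers; the monotonicity of $\psi$ (which the paper itself asserts immediately after the lemma, via convexity of $\log\Gamma$, and which you justify independently by $\psi'(x)=\sum_{k\ge 0}(x+k)^{-2}>0$) then squeezes $\psi(x)$ between $\psi(\lfloor x\rfloor)$ and $\psi(\lfloor x\rfloor+1)$, both of which are $\ln x+o(1)$. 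You were also right to flag that the Stirling computation is only a heuristic: differentiating an asymptotic expansion term by term is not legitimate in general (the $o(1)$ remainder says nothing about its derivative), so treating it as motivation and anchoring the rigorous argument on \Cref{digammaseries} is exactly the correct repair. The one step worth making explicit is the very last one: the additive statement $\psi(x)=\ln x+o(1)$ upgrades to the multiplicative statement $\psi(x)\sim\ln x$ only because $\ln x\to\infty$; with that observation spelled out, the proof is complete.
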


Furthermore, by the log-convexity of the log-Gamma function, the digamma function is monotonically increasing.

Finally, we present a function that appears often in the study of fractional Riemann-Liouville integrals:

\begin{definition} \label{mittagleffler}
The \emph{Mittag-Leffler function} $E_{\alpha}$ is defined as

$$E_{\alpha}(t)=\sum^\infty_{k=0}\frac{t^k}{\Gamma(\alpha k+1)}$$

\end{definition}

\begin{remark} \label{mittaglefflerbound}
We can show that for $\alpha>0$, the sequence $\alpha k+1$ is strictly increasing with $\alpha$. Then, by the ratio test, we have

$$\lim_{k\to\infty}\frac{t^{k+1}}{\Gamma(\alpha k+1+\alpha)}\frac{\Gamma(\alpha k+1)}{t^k}=\lim_{k\to\infty}t\frac{\Gamma(\alpha k+1+\alpha)}{\Gamma(\alpha k+1)}$$

Using \Cref{gammafrac}, 

$$\lim_{k\to\infty}t\frac{\Gamma(\alpha k+1+\alpha)}{\Gamma(\alpha k+1)}=\lim_{k\to\infty}t(\alpha k+1)^{-\alpha}=0$$

Hence the Mittag-Leffler function converges for all $\alpha>0$.
\end{remark}

\newpage

\section{The spectrum and resolvent of \texorpdfstring{$_{x_0}J^\alpha_x$}{the alpha-order integral}}

In this section, we explain the definition of the \emph{resolvent} and its applications in $C_0$-semigroups.\footnote{For more details on resolvents, see \cite{smith2015}.} We also explicitly calculate a closed form expression for the resolvent of the fractional integral.

\begin{definition} \label{resolvents} \emph{\cite[Definition 2.22]{isem15}}
Let $A$ be a closed operator defined on the linear subspace $\dom{A}$ of a Banach space $X$. Then, 

\begin{enumerate}[label=(\roman*)]
    \item The \emph{spectrum} of $A$ is the set:
$$\sigma (A) \coloneqq \{\lambda \in \mathbb{C} \mid \lambda I-A : \dom{A}\rightarrow X \text{ is not bijective}\}$$

    \item If $\lambda\not\in\sigma(A)$, then $(\lambda I-A)$ is also injective, meaning that its algebraic inverse $(\lambda I - A) ^{-1}$ exists and is known as the resolvent of set A at point $\lambda$, denoted as:
$$R(\lambda, A) \coloneqq (\lambda I-A)^{-1}$$

    \item The \emph{spectral radius} of $A$ is defined as:
    $$r(A)\coloneqq \max\{|\lambda|\mid\lambda\in\sigma(A)\}$$
\end{enumerate}
\end{definition}

 We present a useful lemma for finding the spectrum:

\begin{lemma} \label{gelfands}
Let $A$ be a bounded operator. Then,

$$r(A)=\lim_{n\to\infty}\|A^n\|^\frac{1}{n}$$
\end{lemma}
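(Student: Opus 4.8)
The plan is to prove the equality by establishing two opposite inequalities, $\limsup_{n\to\infty}\|A^n\|^{1/n}\leq r(A)$ and $r(A)\leq\liminf_{n\to\infty}\|A^n\|^{1/n}$; since $\liminf\leq\limsup$ always, these squeeze the sequence and simultaneously show both that the limit exists and that it equals $r(A)$, so no separate existence argument is strictly needed (although one could note that $n\mapsto\log\|A^n\|$ is subadditive by submultiplicativity of the norm, whence $\lim_n\|A^n\|^{1/n}=\inf_n\|A^n\|^{1/n}$ by Fekete's lemma). Throughout I rely on $\sigma(A)$ being a non-empty compact subset of $\mathbb{C}$, so that the maximum defining $r(A)$ is genuinely attained; this is the standard consequence of $R(\cdot,A)$ being an $\mathscr{L}(X)$-valued analytic function on the resolvent set, bounded at infinity, together with Liouville's theorem.

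For the lower bound on $\liminf$, fix $n$ and use the polynomial spectral mapping theorem in the form $\sigma(A^n)=\{\lambda^n\mid\lambda\in\sigma(A)\}$. If $\lambda\in\sigma(A)$ then $\lambda^n\in\sigma(A^n)$, and any $\mu$ with $|\mu|>\|A^n\|$ lies in the resolvent set of $A^n$ because the Neumann series $\sum_{k\geq0}\mu^{-k-1}(A^n)^k$ converges in operator norm and inverts $\mu I-A^n$. Hence $|\lambda^n|\leq\|A^n\|$, giving $|\lambda|\leq\|A^n\|^{1/n}$ for every $\lambda\in\sigma(A)$; maximising over the spectrum yields $r(A)\leq\|A^n\|^{1/n}$ for each $n$, and therefore $r(A)\leq\inf_n\|A^n\|^{1/n}\leq\liminf_{n\to\infty}\|A^n\|^{1/n}$.

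The upper bound is the substantive half, which I would obtain by complex analysis combined with \Cref{uniformbound}. Fix $\rho>r(A)$. The resolvent $R(\lambda,A)$ is analytic on $\{|\lambda|>r(A)\}$ and admits, for $|\lambda|>\|A\|$, the expansion $R(\lambda,A)=\sum_{n\geq0}\lambda^{-n-1}A^n$. For each bounded functional $\varphi$ on $\mathscr{L}(X)$, the scalar function $\lambda\mapsto\varphi(R(\lambda,A))$ is holomorphic on $\{|\lambda|>r(A)\}$, so the power series $\sum_n\varphi(A^n)\mu^n$ in $\mu=1/\lambda$ has radius of convergence at least $1/r(A)$ and hence converges at $\mu=1/\rho$; consequently $\varphi(A^n)/\rho^n\to0$, and in particular $\sup_n|\varphi(A^n/\rho^n)|<\infty$. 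Reading each $A^n/\rho^n$ as an element of the bidual acting on the Banach space $\mathscr{L}(X)^*$, this is precisely pointwise boundedness of the family $\{A^n/\rho^n\}$, so \Cref{uniformbound} furnishes a finite constant $C$ with $\|A^n\|/\rho^n\leq C$ for all $n$. Then $\|A^n\|^{1/n}\leq C^{1/n}\rho\to\rho$, so $\limsup_{n\to\infty}\|A^n\|^{1/n}\leq\rho$; letting $\rho\downarrow r(A)$ completes the inequality.

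I expect this upper bound to be the main obstacle, for two reasons. First, the passage from functional-wise boundedness of $\{A^n/\rho^n\}$ to uniform norm boundedness is exactly the Uniform Boundedness Principle, but it hinges on the canonical embedding of $\mathscr{L}(X)$ into its bidual being isometric, so that the bound obtained in the bidual transfers back to the operator norm on $\mathscr{L}(X)$. Second, the entire scheme presupposes analyticity of $R(\cdot,A)$ on $\{|\lambda|>r(A)\}$ together with non-emptiness and compactness of $\sigma(A)$; verifying analyticity via the resolvent identity and a Neumann-series perturbation, and non-emptiness via Liouville's theorem, is routine but indispensable, since otherwise $r(A)$ is not even well defined. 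With both inequalities in hand, the squeeze described above delivers the limit and its value $r(A)$ at once.
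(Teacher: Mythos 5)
The paper offers no proof of this lemma at all: Gelfand's spectral radius formula is stated as a known classical fact and used as a black box, so there is nothing internal to compare your argument against. Your proof is correct and is essentially the standard textbook argument. The lower bound, via the polynomial spectral mapping inclusion $\{\lambda^n \mid \lambda\in\sigma(A)\}\subseteq\sigma(A^n)$ combined with the Neumann-series fact that $\sigma(A^n)$ lies in the closed disk of radius $\|A^n\|$, is sound; and the upper bound, via scalarization with functionals $\varphi\in\mathscr{L}(X)^*$, holomorphy of $R(\cdot,A)$ outside the spectral disk, and the Uniform Boundedness Principle (the paper's \Cref{uniformbound}, applied to the family $\{A^n/\rho^n\}$ viewed isometrically in the bidual), is exactly the classical Gelfand--Rudin route; your observation that the isometry of the canonical embedding (a Hahn--Banach consequence) is what transfers the bidual bound back to $\|A^n\|$ is the right justification. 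Two small points would need to be made explicit in a full write-up: to conclude that the expansion of $\varphi(R(1/\mu,A))$ in powers of $\mu$ has radius of convergence at least $1/r(A)$, you must note that the singularity at $\mu=0$ is removable, which follows from the decay $\|R(\lambda,A)\|\leq(|\lambda|-\|A\|)^{-1}\to0$ as $|\lambda|\to\infty$ (the same decay you already invoke for non-emptiness of the spectrum via Liouville); and the series is really $\sum_n\varphi(A^n)\mu^{n+1}$ rather than $\sum_n\varphi(A^n)\mu^{n}$, though this shift does not change the radius of convergence. Neither point affects correctness, and your proof also fills a genuine gap in the paper, since the lemma is load-bearing there (it is used to show the spectrum of $_{x_0}J^\alpha_x$ is $\{0\}$) yet is never justified or cited.
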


We first provide method of computing the resolvent operator of a set:

\begin{theorem} \label{neumann} \emph{(Neumann series)}
Suppose $A$ is an operator acting on Banach space $X$ and $\lambda\in\mathbb{C}$ is a number such that $|\lambda|<r(A)$. Then, the series

$$\frac{1}{\lambda}\sum^\infty_{k=0}\frac{A^k}{\lambda^{k}}$$

converges towards the resolvent operator $R(\lambda, A)$.
\end{theorem}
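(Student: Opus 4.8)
The plan is to first establish convergence of the series in the operator norm, then identify its limit with the algebraic inverse $(\lambda I - A)^{-1}$ via a telescoping argument. I work throughout in $\mathscr{L}(X)$, which is a Banach space under the operator norm, so any absolutely convergent series of operators converges. I note in passing that for the partial sums to converge one in fact needs $|\lambda| > r(A)$, i.e. $r(A)/|\lambda| < 1$; this is the hypothesis I use, and the convergence is precisely what pins down the admissible $\lambda$.

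For convergence, I write the $N$-th partial sum as $S_N \coloneqq \frac{1}{\lambda}\sum_{k=0}^N \frac{A^k}{\lambda^k}$. By \Cref{gelfands}, $\lim_{n\to\infty}\|A^n\|^{1/n} = r(A)$, so for any $\rho$ with $r(A) < \rho < |\lambda|$ there exists $N_0$ such that $\|A^k\| \le \rho^k$ for all $k \ge N_0$. Hence $\|A^k/\lambda^{k+1}\| \le \rho^k/|\lambda|^{k+1}$, and since $\rho/|\lambda| < 1$ the tail is dominated by a convergent geometric series. Thus $(S_N)$ is Cauchy in $\mathscr{L}(X)$ and converges to some bounded operator $S$.

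To identify $S$, I multiply the partial sums by $(\lambda I - A)$ and exploit the telescoping
$$(\lambda I - A)S_N = \sum_{k=0}^N \frac{A^k}{\lambda^k} - \sum_{k=0}^N \frac{A^{k+1}}{\lambda^{k+1}} = I - \frac{A^{N+1}}{\lambda^{N+1}},$$
with the identical formula holding for $S_N(\lambda I - A)$, since $S_N$ is a polynomial in $A$ and therefore commutes with $\lambda I - A$. The root-test estimate above also gives $\|A^{N+1}/\lambda^{N+1}\| \to 0$, so letting $N\to\infty$ and using continuity of multiplication in $\mathscr{L}(X)$ yields $(\lambda I - A)S = S(\lambda I - A) = I$. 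Hence $\lambda I - A$ admits a bounded two-sided inverse $S$; in particular $\lambda \notin \sigma(A)$, and by \Cref{resolvents} we conclude $S = R(\lambda, A)$.

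The main obstacle is the convergence step, because the hypothesis is stated in terms of the spectral radius rather than the operator norm, so the naive bound $\|A^k\| \le \|A\|^k$ (which would only give convergence for $|\lambda| > \|A\|$) is too weak. The decisive move is to feed \Cref{gelfands} into a root-test argument, replacing $\|A\|$ by an intermediate $\rho$ strictly between $r(A)$ and $|\lambda|$; once this is set up, both the convergence of $(S_N)$ and the vanishing of the remainder $A^{N+1}/\lambda^{N+1}$ follow from the same geometric estimate, after which the telescoping identity is purely algebraic.
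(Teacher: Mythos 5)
Your proof is correct, but there is nothing in the paper to compare it against: the paper states \Cref{neumann} as a known classical fact and gives no proof at all, moving directly on to the Hille--Yosida theorem. Your argument is the standard one and fills this gap cleanly: Gelfand's formula (\Cref{gelfands}) converts the spectral-radius hypothesis into the tail bound $\|A^k\|\leq\rho^k$ for some $\rho$ with $r(A)<\rho<|\lambda|$, absolute convergence in the Banach space $\mathscr{L}(X)$ produces a limit $S$, and the telescoping identity $(\lambda I-A)S_N=S_N(\lambda I-A)=I-A^{N+1}/\lambda^{N+1}$ together with the vanishing of the remainder shows that $S$ is a two-sided bounded inverse of $\lambda I-A$, hence $\lambda\notin\sigma(A)$ and $S=R(\lambda,A)$ by \Cref{resolvents}. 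You are also right to flip the hypothesis: as printed, the condition $|\lambda|<r(A)$ is backwards --- for $A={_{x_0}J^\alpha_x}$ the paper shows $r(A)=0$, so no $\lambda$ would satisfy the printed condition, and in general the series diverges under it --- whereas $|\lambda|>r(A)$ is both what your root-test estimate needs and what the paper implicitly uses later when it applies the theorem for every $\lambda\neq0$ after establishing that the spectral radius is zero. The one assumption worth making explicit is that $A$ must be bounded (otherwise \Cref{gelfands} and your norm estimates do not apply); in the paper's application this is guaranteed by \Cref{fracintbound}.
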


Finally, we cite an important theorem:

\begin{theorem} \label{hilleyosida} \emph{(Hille-Yosida Theorem) \cite[Theorem 13.37]{rudin1991}}
Let $T$ be a uniformly continuous semigroup of type $(M, \omega)$ with the infinitesimal generator $A$, which is densely defined on $X$. Then, for all $\lambda\in\mathbb{R}$ where $\lambda>\omega$.

    $$\left\|R(\lambda, A)^n\right\|\leq\frac{M}{(\lambda-\omega)^n}$$
\end{theorem}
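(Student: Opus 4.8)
The plan is to realise the resolvent as the Laplace transform of the semigroup and then read off the estimate directly from the growth bound $\|T(t)\|\le Me^{\omega t}$ supplied by \Cref{semigroupbound}. Fix $\lambda>\omega$ and, for $f\in X$, define the candidate operator
\[
R(\lambda)f\coloneqq\int_0^\infty e^{-\lambda t}T(t)f\,dt .
\]
First I would check that this Bochner integral converges and defines a bounded operator: since $t\mapsto e^{-\lambda t}T(t)f$ is continuous (hence Bochner measurable) and $\|e^{-\lambda t}T(t)f\|_X\le Me^{(\omega-\lambda)t}\|f\|$ with $\lambda-\omega>0$, the integrand is Bochner integrable by \Cref{bochnerprops}, and integrating the bound yields $\|R(\lambda)f\|\le \tfrac{M}{\lambda-\omega}\|f\|$. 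This already settles the case $n=1$.

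The conceptual heart of the argument is to verify that $R(\lambda)$ is genuinely the resolvent $R(\lambda,A)=(\lambda I-A)^{-1}$, rather than merely some bounded operator, since the statement concerns $R(\lambda,A)$. For this I would establish the two one-sided identities. To obtain $(\lambda I-A)R(\lambda)f=f$ for every $f\in X$, I would form the difference quotient $\tfrac1h\bigl(T(h)-I\bigr)R(\lambda)f$, substitute $t\mapsto t+h$ inside the integral, and let $h\to0^+$: the factor $\tfrac{e^{\lambda h}-1}{h}\to\lambda$ contributes $\lambda R(\lambda)f$, while the boundary term $-e^{\lambda h}\tfrac1h\int_0^h e^{-\lambda t}T(t)f\,dt$ tends to $-T(0)f=-f$ by strong continuity, so $R(\lambda)f\in\dom{A}$ with $AR(\lambda)f=\lambda R(\lambda)f-f$. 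For the reverse inclusion, given $f\in\dom{A}$ I would use $T(t)Af=\tfrac{\mathrm d}{\mathrm dt}T(t)f$ from \Cref{generatorprops} and integrate by parts to get $R(\lambda)Af=\lambda R(\lambda)f-f=AR(\lambda)f$, whence $R(\lambda)(\lambda I-A)f=f$. Together these show $R(\lambda)=R(\lambda,A)$.

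Next I would compute the powers. Composing $R(\lambda)$ with itself, using the semigroup property $T(s)T(u)=T(s+u)$ together with Fubini's theorem for Bochner integrals (\Cref{fubini}) and the substitution $t=s+u$, gives $R(\lambda,A)^2f=\int_0^\infty t\,e^{-\lambda t}T(t)f\,dt$; iterating and evaluating the resulting inner integral $\int_0^t (t-s)^{n-2}\,ds=\tfrac{t^{n-1}}{n-1}$ yields by induction
\[
R(\lambda,A)^nf=\frac{1}{(n-1)!}\int_0^\infty t^{n-1}e^{-\lambda t}T(t)f\,dt .
\]
Finally, estimating the norm with $\|T(t)\|\le Me^{\omega t}$ and computing the scalar Gamma integral $\int_0^\infty t^{n-1}e^{-(\lambda-\omega)t}\,dt=\tfrac{(n-1)!}{(\lambda-\omega)^n}$ produces $\|R(\lambda,A)^nf\|\le\tfrac{M}{(\lambda-\omega)^n}\|f\|$, which is the claimed bound after taking the supremum over $\|f\|\le1$.

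I expect the main obstacle to be the identification step of the second paragraph: showing that the Laplace transform $R(\lambda)$ actually inverts $\lambda I-A$ requires pushing the generator through the integral and justifying the difference-quotient limit, which is where the defining properties of the generator (\Cref{generatorprops}) and strong continuity do the real work. By contrast, the remaining interchanges — checking the integrability hypothesis of Fubini in the induction and the integration by parts under the integral sign — are routine once the growth estimate $\|T(t)\|\le Me^{\omega t}$ is in hand.
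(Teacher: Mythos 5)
Your proposal is correct, and it reaches the same pivotal identity as the paper --- $R(\lambda,A)^n f=\frac{1}{(n-1)!}\int_0^\infty s^{n-1}e^{-\lambda s}T(s)f\,ds$, followed by the same Gamma-integral (Laplace transform) estimate --- but it gets there by genuinely different intermediate steps. For the identification $R(\lambda,A)f=\int_0^\infty e^{-\lambda s}T(s)f\,ds$, the paper exploits the uniform continuity hypothesis: since $T(t)=e^{At}$ by \Cref{uniformsemigroup}, the rescaled family $e^{-\lambda t}T(t)=e^{(A-\lambda I)t}$ is itself a semigroup generated by $A-\lambda I$, so \Cref{generatorprops}(ii) gives the finite-time identity of \Cref{integralresolvent}, and letting $t\to\infty$ yields the Laplace-transform formula; you instead verify the two one-sided identities $(\lambda I-A)R(\lambda)f=f$ and $R(\lambda)(\lambda I-A)f=f$ by the difference-quotient and integration-by-parts arguments. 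For the powers, the paper differentiates the resolvent in $\lambda$, invoking $\frac{\mathrm{d}^{n}}{\mathrm{d}\lambda^{n}}R(\lambda,A)=(-1)^{n}n!\,R(\lambda,A)^{n+1}$ from \Cref{resolventprops} together with differentiation under the integral sign (\Cref{leibnizintrule}); you instead compose the integral representation with itself using the semigroup property, Fubini (\Cref{fubini}), and induction. Your route is more self-contained and in fact proves more: it works for any $C_0$-semigroup of type $(M,\omega)$ and establishes along the way that $\lambda\in\rho(A)$ (both one-sided inverses exist), a point the paper's chain $R(\lambda,A)f=(\lambda-A)^{-1}(\lambda-A)\int_0^\infty e^{-\lambda s}T(s)f\,ds$ quietly presupposes. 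What the paper's route buys in exchange is brevity: by leaning on uniform continuity and the complex-differentiability of the resolvent, it avoids both the difference-quotient computation and the Fubini induction.
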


The proof in its entirety can be found in \Cref{appendix}.

Given all of these results, we can begin studying the spectrum of the operator $J^\alpha$ for $\alpha>0$.

\begin{theorem}
The spectrum of $_{x_0}J^\alpha_x$ is the set $\{0\}$.
\end{theorem}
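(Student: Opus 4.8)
The plan is to compute the \emph{spectral radius} of $J^\alpha \coloneqq {}_{x_0}J^\alpha_x$ via Gelfand's formula (\Cref{gelfands}), show it equals $0$, and then argue that $0$ itself lies in the spectrum, so that $\sigma(J^\alpha)=\{0\}$ exactly. The crucial structural fact I would establish first is the \emph{index law} (semigroup property) of the Riemann--Liouville integral, namely $J^\alpha J^\beta = J^{\alpha+\beta}$, and in particular $(J^\alpha)^n = J^{n\alpha}$ for every $n\in\mathbb{N}$. This is what makes the argument work: the naive bound $\|(J^\alpha)^n\|\le\|J^\alpha\|^n$ only yields $r(J^\alpha)\le L^\alpha/\Gamma(\alpha+1)$ (writing $L\coloneqq x_1-x_0$), which is generally positive, whereas recognising the iterate as a single fractional integral of order $n\alpha$ gives a far smaller bound.

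To prove the index law I would write out $J^\alpha(J^\beta f)(x)$ as an iterated integral, interchange the order of integration using Fubini's theorem for Bochner integrals (\Cref{fubini}) --- whose hypothesis holds because the double integrand is absolutely integrable by \Cref{fracintbound} --- and then evaluate the inner integral $\int_t^x (x-s)^{\alpha-1}(s-t)^{\beta-1}\,ds$ by the Beta-function substitution, obtaining $\frac{\Gamma(\alpha)\Gamma(\beta)}{\Gamma(\alpha+\beta)}(x-t)^{\alpha+\beta-1}$. The $\Gamma$ factors collapse with the normalising constants to leave exactly $J^{\alpha+\beta}f(x)$.

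With the index law in hand, \Cref{fracintbound} gives
$$\bigl\|(J^\alpha)^n\bigr\| = \bigl\|J^{n\alpha}\bigr\| \le \frac{L^{n\alpha}}{\Gamma(n\alpha+1)},$$
so that, by Gelfand's formula,
$$r(J^\alpha) = \lim_{n\to\infty}\bigl\|(J^\alpha)^n\bigr\|^{1/n} \le \lim_{n\to\infty}\frac{L^\alpha}{\Gamma(n\alpha+1)^{1/n}}.$$
I would then apply Stirling's approximation (\Cref{stirling}) to show $\Gamma(n\alpha+1)^{1/n}\to\infty$: the dominant factor $\bigl((n\alpha+1)/e\bigr)^{(n\alpha+1)/n}$ has base tending to infinity and exponent tending to $\alpha>0$, so the whole expression diverges. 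Hence the limiting upper bound is $0$, and since the spectral radius is non-negative, $r(J^\alpha)=0$.

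It remains to confirm that the spectrum is not empty, i.e.\ that $0$ really belongs to $\sigma(J^\alpha)$. Since \Cref{gelfands} and \Cref{resolvents}(iii) already presuppose a well-defined (hence non-empty) spectrum, and every $\lambda\in\sigma(J^\alpha)$ satisfies $|\lambda|\le r(J^\alpha)=0$, the only candidate is $\lambda=0$, giving $\sigma(J^\alpha)=\{0\}$. For a self-contained verification that $0\in\sigma(J^\alpha)$ I would instead show $J^\alpha$ fails to be surjective --- every function in its range vanishes (in the appropriate sense) as $x\to x_0^+$, so a nonzero constant function is not attained --- whence $0I-J^\alpha$ is not bijective. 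I expect the main obstacle to be the careful justification of the index law in the Bochner-valued setting (the Fubini interchange together with the Beta integral), with the Stirling asymptotics a close second; the step confirming $0\in\sigma(J^\alpha)$ is comparatively routine.
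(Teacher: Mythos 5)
Your proposal is correct and takes essentially the same route as the paper: Gelfand's formula (\Cref{gelfands}) together with the bound $\|{}_{x_0}J^{n\alpha}_x\|\le (x_1-x_0)^{n\alpha}/\Gamma(n\alpha+1)$ from \Cref{fracintbound} and Stirling's approximation to conclude $r\left({}_{x_0}J^\alpha_x\right)=0$, plus an argument that $0$ itself lies in the spectrum. If anything you are more careful than the paper on two points: you supply a proof of the index law $(J^\alpha)^n=J^{n\alpha}$, which the paper invokes silently, and you correctly present the membership of $0$ as a failure of surjectivity, whereas the paper labels it a failure of injectivity while actually giving the surjectivity argument (every $g$ in the range must satisfy $g(x_0)=0$).
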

\begin{proof}
First, we note that $_{x_0}J^\alpha_x$ is not injective. This is because for every $g\in L^p(x_0,x_1;X)$, there exists $f$ such that $_{x_0}J^\alpha_xf=g$ only if $g(x_0)=0$.

Now, using \Cref{gelfands} and \Cref{fracintbound}, we find that

\begin{align*}
r\left(_{x_0}J^\alpha_x\right)=\lim_{n\to\infty}\left\|_{x_0}J^{\alpha n}_{x}\right\|_{L^p(x_0,x_1;X)}^\frac{1}{n}\leq\lim_{n\to\infty}\left[\frac{(x_1-x_0)^{\alpha n}}{\Gamma(\alpha n+1)}\right]^\frac{1}{n}=\lim_{n\to\infty}\frac{(x_1-x_0)^\alpha}{[\Gamma(\alpha n+1)]^\frac{1}{n}}
\end{align*}

Using Stirling's approximation, we find that

\begin{align*}
    r({_{x_0}J^\alpha_{x}})&=\lim_{n\to\infty}\frac{(x_1-x_0)^\alpha}{\left[\sqrt{\frac{2\pi}{\alpha n +1}}\left(\frac{\alpha n +1}{e}\right)^{\alpha n+1}\right]^\frac{1}{n}}\\
    &=\lim_{n\to\infty}(x_1-x_0)^\alpha \left(\frac{2\pi}{\alpha n+1}\right)^{-\frac{1}{2n}}\left(\frac{e}{\alpha n+1}\right)^{\alpha +\frac{1}{n}}\\
    &\leq(x_1-x_0)^\alpha\lim_{n\to\infty}\left(\frac{e}{\alpha n+1}\right)^\alpha=0
\end{align*}

Hence, the spectral radius of $_{x_0}J^\alpha_x$ is $0$ and the series converges.

\end{proof}

\begin{remark}
    Notably, for fractional derivatives, the spectrum is unbounded. For example, for the second derivative on functions with domain $[x_0, x_1]$, the spectrum consists of the points $-\frac{x_1-x_0}{2\pi}n^2$ for $n\in\mathbb{N}$.
\end{remark}

Now we explicitly find the resolvent operator:

\begin{theorem}
For $f\in L^p(I;X)$,

$$R\left(\lambda,\, _{x_0}J^\alpha_x\right)f(x)=\frac{1}{\lambda}\left[f(x)+\int^x_{x_0}\frac{\partial}{\partial x}E_\alpha\left(\lambda^{-1}(x-s)^\alpha\right)f(s)\,ds\right]$$ for $\lambda\neq0$.
\end{theorem}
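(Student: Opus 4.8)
The plan is to compute the resolvent directly from its Neumann series representation (\Cref{neumann}) and then recognise the resulting kernel as an $x$-derivative of a Mittag-Leffler function. Having just shown that $r({}_{x_0}J^\alpha_x)=0$, the Neumann series converges in operator norm for every $\lambda$ with $|\lambda|>r({}_{x_0}J^\alpha_x)=0$, i.e.\ for every $\lambda\neq 0$, so that
$$R\left(\lambda,\,{}_{x_0}J^\alpha_x\right)f=\frac{1}{\lambda}\sum_{k=0}^\infty\frac{\left({}_{x_0}J^\alpha_x\right)^k f}{\lambda^k}.$$
Next I would invoke the index law (semigroup property) of the Riemann-Liouville integral, $\left({}_{x_0}J^\alpha_x\right)^k={}_{x_0}J^{\alpha k}_x$ — the same identity already used implicitly in the computation of the spectral radius — to rewrite each term as an explicit convolution integral. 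Separating off the $k=0$ term, which is just $f$, gives
$$R\left(\lambda,\,{}_{x_0}J^\alpha_x\right)f(x)=\frac{1}{\lambda}\left[f(x)+\sum_{k=1}^\infty\frac{1}{\lambda^k\Gamma(\alpha k)}\int_{x_0}^x(x-s)^{\alpha k-1}f(s)\,ds\right].$$

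The decisive step is to interchange the summation and the integral and then identify the kernel. After the interchange the inner sum becomes $\sum_{k=1}^\infty\frac{(x-s)^{\alpha k-1}}{\lambda^k\Gamma(\alpha k)}$; using the functional equation $\Gamma(\alpha k+1)=\alpha k\,\Gamma(\alpha k)$ one verifies that this is precisely the term-by-term $x$-derivative of $E_\alpha\left(\lambda^{-1}(x-s)^\alpha\right)=\sum_{k\geq 0}\frac{\lambda^{-k}(x-s)^{\alpha k}}{\Gamma(\alpha k+1)}$ (\Cref{mittagleffler}), with the $k=0$ term dropping out under differentiation. Pulling the factor $\frac{\partial}{\partial x}E_\alpha\left(\lambda^{-1}(x-s)^\alpha\right)$ back inside the integral then yields the claimed formula.

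I expect the interchange of sum and integral to be the main obstacle, since the kernel $(x-s)^{\alpha k-1}$ is singular at $s=x$ whenever $\alpha k<1$, in particular for the $k=1$ term when $\alpha<1$. To justify it I would pass to absolute values and apply Tonelli's theorem (the nonnegative case of \Cref{fubini}) to the product of counting measure and Lebesgue measure, bounding $\sum_{k=1}^\infty|\lambda|^{-k}\,{}_{x_0}J^{\alpha k}_x(\|f\|_X)$ in $L^p$-norm. By \Cref{fracintbound} each summand satisfies $\big\|{}_{x_0}J^{\alpha k}_x(\|f\|_X)\big\|_{L^p}\leq\frac{(x_1-x_0)^{\alpha k}}{\Gamma(\alpha k+1)}\|f\|_{L^p}$, so the series is dominated by $E_\alpha\!\left(|\lambda|^{-1}(x_1-x_0)^\alpha\right)\|f\|_{L^p}$, which is finite by the Mittag-Leffler convergence established in \Cref{mittaglefflerbound}. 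This simultaneously shows the resolvent series converges absolutely in $L^p$, legitimises the pointwise interchange and the term-by-term differentiation, and confirms that the final expression genuinely defines an element of $L^p(I;X)$.
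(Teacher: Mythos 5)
Your proposal is correct, but it takes a genuinely different route from the paper's proof. The paper does not split off the $k=0$ term or sum the singular kernels directly: instead it first composes the Neumann series with an extra order-one integral ${}_{x_0}J^1_x$, so that every term becomes a convolution against the \emph{non-singular} kernel $(x-s)^{k\alpha}/\Gamma(k\alpha+1)$; the Fubini step (with counting measure, as in your argument) is then immediate, and the resolvent is recovered at the end by differentiating and invoking the Leibniz integral rule (\Cref{leibnizintrule}), with the $f(x)$ term appearing as the boundary contribution $E_\alpha(0)f(x)$. You instead keep the series as it stands, identify $({}_{x_0}J^\alpha_x)^k={}_{x_0}J^{\alpha k}_x$, peel off $k=0$ as $f(x)$, and justify the sum--integral interchange for the singular kernels $(x-s)^{\alpha k-1}$ via Tonelli together with the $L^p$ bounds of \Cref{fracintbound} and the convergence from \Cref{mittaglefflerbound}. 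The trade-off: the paper's smoothing trick keeps the Fubini verification trivial but defers the singularity to the final differentiation step (the differentiated kernel is again $(x-s)^{\alpha k -1}$, singular at $s=x$ for $\alpha k<1$, so the Leibniz application quietly needs the same kind of control you supply explicitly); your version avoids the Leibniz rule altogether and yields absolute a.e.\ convergence of the resolvent series as a by-product, at the cost of the heavier Tonelli estimate. One small correction: your Tonelli bound legitimises the interchange of sum and integral, but not, strictly speaking, the term-by-term differentiation of the Mittag-Leffler series; that step should instead be justified separately (and easily) by noting that $E_\alpha$ is entire, so $\frac{\partial}{\partial w}E_\alpha(\lambda^{-1}w^\alpha)=\sum_{k\geq 1}\frac{\lambda^{-k}w^{\alpha k-1}}{\Gamma(\alpha k)}$ for $w>0$ follows from the chain rule and differentiation of a power series inside its radius of convergence.
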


\begin{proof}
Because $\lambda\neq0$, we can apply \Cref{neumann}:

$$
\begin{aligned}
    _{x_0}J_{x}R\left(\lambda,\, _{x_0}J^\alpha_x\right)f(x)&=\frac{1}{\lambda}\sum^\infty_{k=0}\lambda^{-k}{_{x_0}J^{1+k\alpha}_{x}}f(x) \\
    &=\frac{1}{\lambda}\sum^\infty_{k=0}\int^x_{x_0}\frac{\lambda^{-k}(x-s)^{k\alpha}}{\Gamma(k\alpha+1)}f(s)\,ds \\
    R\left(\lambda,\, _{x_0}J^\alpha_x\right)f(x)&=\frac{1}{\lambda}\frac{d}{dx}\sum^\infty_{k=0}\int^x_{x_0}\frac{\lambda^{-k}(x-s)^{k\alpha}}{\Gamma(k\alpha+1)}f(s)\,ds \\
\end{aligned}
$$

We can consider the summation as integration over $\mathbb{N}$ with the counting measure.

\begin{align*}
    \int_{\mathbb{N}\times[x_0, x]}\left\|\frac{\lambda^{-k}(x-s)^{k\alpha}}{\Gamma(k\alpha+1)}f(s)\right\|_X\,d(s, k)&=\int^x_{x_0}\sum^\infty_{k=0}\frac{\lambda^{-k}(x-s)^{k\alpha}}{\Gamma(k\alpha+1)}\|f(s)\|_X\,ds\\
    &=\int^x_{x_0} E_\alpha\left(\lambda^{-1}(x-s)^\alpha\right)\|f(s)\|_X\,ds \\
    &\leq E_\alpha\left(\lambda^{-1}(x-x_0)^\alpha\right)\int^x_{x_0} \|f(s)\|_X\,ds
\end{align*}

Which is less than $\infty$ by \Cref{mittaglefflerbound}.

Hence, we can apply Fubini's Theorem (\ref{fubini}):

\begin{align*}
    R\left(\lambda,\, _{x_0}J^\alpha_x\right)f(x)&=\frac{1}{\lambda}\frac{d}{dx}\int^x_{x_0}\sum^\infty_{k=0}\frac{\lambda^{-k}(x-s)^{k\alpha}}{\Gamma(k\alpha+1)}f(s)\,ds \\
    &=\frac{1}{\lambda}\frac{d}{dx}\int^x_{x_0}E_\alpha\left(\lambda^{-1}(x-x_0)^\alpha\right)f(s)\,ds \\
    &=\frac{1}{\lambda}\left[f(x)+\int^x_{x_0}\frac{\partial}{\partial x}E_\alpha\left(\lambda^{-1}(x-s)^\alpha\right)f(s)\,ds\right] \\
\end{align*}

The last line resulting from applying \Cref{leibnizintrule}.

\end{proof}

\newpage

\section{Boundedness of the semigroup generated by the fractional integral}

In this section, we will apply \Cref{hilleyosida} to determine an exact bound for the size (operator norm) of the semigroup generated by the fractional integral, which we will define as:
$$\Phi(\alpha,t)\coloneqq e^{_{x_0}J^\alpha_xt} = \sum_{k=0}^\infty{_{x_0}J^{k\alpha}_{x}}\frac{t^k}{k!}$$

\begin{lemma}\label{resolventriangle}
    Let $$Af(x)=\int^x_{x_0}\frac{\partial}{\partial x}E_\alpha\left(\lambda^{-1}(x-s)^\alpha\right)f(s)\,ds$$ Then the semigroup generated by $J^\alpha$ is of type $(1, \omega)$ iff for all $\lambda$ such that $\Re(\lambda)>\omega$, we have

    $$\|A\|\leq\sum^\infty_{k=1}\frac{\omega^k}{\lambda^k}$$
\end{lemma}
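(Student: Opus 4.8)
The plan is to read the operator $A$ straight off the resolvent formula established in the previous theorem, namely $R(\lambda,\,{_{x_0}J^\alpha_x}) = \tfrac{1}{\lambda}(I + A)$, and then to translate the ``type $(1,\omega)$'' condition into a norm bound on $A$ by means of the Hille--Yosida theorem (\Cref{hilleyosida}). First I would evaluate the right-hand side of the claimed inequality: since $\Re(\lambda) > \omega \ge 0$ forces $|\omega/\lambda| < 1$, the geometric series sums to $\sum_{k=1}^\infty \omega^k/\lambda^k = \omega/(\lambda-\omega)$, so the lemma reduces to the equivalence of ``$_{x_0}J^\alpha_x$ generates a semigroup of type $(1,\omega)$'' with ``$\|A\| \le \omega/(\lambda-\omega)$ for every $\lambda$ with $\Re(\lambda)>\omega$''.

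For the implication ``bound $\Rightarrow$ type'' I would argue as follows. From $\|A\| \le \omega/(\lambda-\omega)$ the triangle inequality gives $\|I+A\| \le 1 + \omega/(\lambda-\omega) = \lambda/(\lambda-\omega)$, whence $\|R(\lambda,\,{_{x_0}J^\alpha_x})\| = \tfrac{1}{\lambda}\|I+A\| \le 1/(\lambda-\omega)$, and submultiplicativity upgrades this to $\|R(\lambda,\,{_{x_0}J^\alpha_x})^n\| \le (\lambda-\omega)^{-n}$ for all $n$. Feeding these resolvent estimates with $M=1$ into the converse half of \Cref{hilleyosida} yields that $_{x_0}J^\alpha_x$ generates a semigroup of type $(1,\omega)$. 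This half is exactly the triangle-inequality argument that the lemma's name advertises.

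The implication ``type $\Rightarrow$ bound'' is where the genuine work lies, and I expect it to be the main obstacle. Hille--Yosida gives $\|R(\lambda,\,{_{x_0}J^\alpha_x})\| \le 1/(\lambda-\omega)$, i.e. $\|I+A\| \le 1 + \omega/(\lambda-\omega)$; but the triangle inequality only supplies $\|I+A\| \le 1 + \|A\|$, which runs the wrong way for concluding that $\|A\|$ is small. To close this gap I would exploit positivity: for $\lambda>0$ and $x>s$ the kernel $\frac{\partial}{\partial x}E_\alpha\!\big(\lambda^{-1}(x-s)^\alpha\big) = \alpha\lambda^{-1}(x-s)^{\alpha-1}E_\alpha'\!\big(\lambda^{-1}(x-s)^\alpha\big)$ is nonnegative, since every Taylor coefficient of $E_\alpha$ is positive, so $A$ is a positive Volterra operator. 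For such operators the supremum defining $\|A\|$ is attained on nonnegative functions, and on $L^1$ and $L^\infty$ one verifies directly that $\|I+A\| = 1 + \|A\|$; together with $\|I+A\| \le 1 + \omega/(\lambda-\omega)$ this delivers precisely $\|A\| \le \omega/(\lambda-\omega)$.

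The delicate point, and the part I would spend the most care on, is making the identity $\|I+A\| = 1+\|A\|$ rigorous for the full range $1<p<\infty$: the elementary estimate $(a+b)^p \ge a^p+b^p$ only gives $\|I+A\| \ge (1+\|A\|^p)^{1/p}$, which is strictly too weak. I would resolve this in one of two ways. The first is to pass to the Laplace representation $A = \int_0^\infty \lambda e^{-\lambda t}\big(\Phi(\alpha,t)-I\big)\,dt$ and bound $\|\Phi(\alpha,t)-I\| \le e^{\omega t}-1$ using positivity of $\Phi(\alpha,t)-I$, which integrates to exactly $\omega/(\lambda-\omega)$. The second is to show that along a maximizing sequence for $\|A\|$ the functions $f$ and $Af$ become asymptotically proportional near the right endpoint $x_1$, so that the triangle inequality saturates in the limit and the additive relation is recovered; the Volterra (lower-triangular) structure of the kernel is what makes this alignment plausible.
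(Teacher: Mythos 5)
Your first two paragraphs reproduce the paper's proof exactly: the paper likewise sums the geometric series to $\omega/(\lambda-\omega)$, applies the triangle inequality to $R(\lambda,\,{_{x_0}J^\alpha_x})=\tfrac{1}{\lambda}(I+A)$, upgrades to powers of the resolvent by submultiplicativity, and invokes \Cref{hilleyosida}. What you could not have known is that this sufficiency argument is the \emph{entire} proof in the paper: despite the ``iff'' in the statement, the paper's proof is a chain of ``it suffices to show'' reductions ending at the bound on $\|A\|$, so the converse direction (type $(1,\omega)$ $\Rightarrow$ bound on $\|A\|$) is never addressed there at all; only the sufficiency half is used later (in the theorem establishing that $\Phi$ is of type $(1,\omega)$). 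So on the direction the paper actually proves, you match it, and your identification of the converse as ``where the genuine work lies'' points at a real gap in the paper itself.

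Your attempts to close that converse, however, do not succeed for $1<p<\infty$, and the obstruction is not merely technical. The pivot of both your positivity arguments is the reverse triangle identity $\|I+B\|=1+\|B\|$ for positive operators $B$; this is correct on $L^1$ and $L^\infty$ (and there your converse argument does work, at least for real $\lambda>\omega$, the only regime in which the kernel of $A$ is nonnegative), but it is false for $1<p<\infty$. Take $B(f_1,f_2)=(f_2,0)$ on $\mathbb{C}^2$ with the Euclidean norm: $B$ is a strictly triangular (Volterra-type) operator with nonnegative kernel and $\|B\|=1$, yet $\|I+B\|=\frac{1+\sqrt{5}}{2}<2$, so a bound on $\|I+B\|$ cannot be converted into the bound you need on $\|B\|$. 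The same example defeats your Laplace-transform repair: since $B^2=0$ and the numerical range of $B$ is the disk of radius $\frac{1}{2}$, the positive semigroup $e^{tB}=I+tB$ is of type $\left(1,\frac{1}{2}\right)$, yet $\|e^{tB}-I\|=t>e^{t/2}-1$ for small $t>0$; hence positivity plus type $(1,\omega)$ does not imply $\|\Phi(\alpha,t)-I\|\le e^{\omega t}-1$. Your second repair (asymptotic alignment of a maximizing sequence with its image) is a heuristic, not a proof. The net effect: your proposal establishes exactly what the paper establishes, plus the converse for $p\in\{1,\infty\}$ and real $\lambda$; for $1<p<\infty$ the converse remains open both in your write-up and in the paper, and any correct proof of it would have to exploit finer structure of ${_{x_0}J^\alpha_x}$ than positivity and the Hille--Yosida estimate alone.
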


\begin{proof}
    
    Suppose $\left\|R\left(\lambda,\, _{x_0}J^\alpha_x\right)\right\|<\frac{1}{\lambda-\omega}$. Then,
    $$\left\|\left(\lambda I-_{x_0}J^\alpha_x\right)^{-n}\right\|\leq\left\|\left(\lambda I-_{x_0}J^\alpha_x\right)^{-1}\right\|^n=\frac{1}{(\lambda-\omega)^n}$$

    Hence, it suffices to show that $\left\|R\left(\lambda,\, _{x_0}J^\alpha_x\right)\right\|<\frac{1}{\lambda-\omega}$.

    Now note that by the triangle inequality, $\left\|R\left(\lambda,\, _{x_0}J^\alpha_x\right)\right\|\leq\frac{1}{\lambda}(\|I\|+\|A\|)=\frac{1}{\lambda}(1+\|A\|)$. Hence, it is sufficient to show that

    \begin{align*}
        \frac{1}{\lambda}(1+\|A\|) &\leq \frac{1}{\lambda-\omega} \\
        1+\|A\| &\leq \frac{\lambda}{\lambda-\omega} \\
        \|A\| &\leq \frac{\lambda}{\lambda-\omega}-1 \\
        &=\frac{\omega}{\lambda-\omega} \\
        &=\sum^\infty_{k=1}\frac{\omega^k}{\lambda^k} \\
    \end{align*}
    
\end{proof}

We note a lemma for finding the norm of an operator:
\begin{lemma}\label{normfind}
    Suppose there exists an operator $S\in\mathscr{L}(L^p(x_0,x_1;X))$ such that for $f\in L^p(x_0,x_1;X)$ we find a.e. that if:

    $$S f(x)=\int^x_{x_0} g(x-s)f(s)\, ds$$

    Then, for all $1\leq p\leq\infty$,

    $$\|S\|_{\mathscr{L}(L^p(x_0,x_1;X))}\leq\int^{x_1-x_0}_0 |g(w)|\, dw$$
\end{lemma}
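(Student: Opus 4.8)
The plan is to recognize $S$ as a Volterra convolution operator and to bound its norm by essentially copying the proof of \Cref{fracintbound}, with the specific kernel $(x-s)^{\alpha-1}/\Gamma(\alpha)$ replaced by the general kernel $g(x-s)$. The engine of the whole argument is \Cref{C48}, which is purpose-built for estimating the $L^p$ norm of exactly this type of convolution, so the task reduces to aligning the substitutions so that our expression matches its hypotheses.

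First I would estimate $\|Sf\|_{L^p(x_0,x_1;X)}$ by pulling the $X$-norm inside the Bochner integral via the triangle inequality, obtaining
$$\|Sf(x)\|_X\leq\int_{x_0}^x|g(x-s)|\,\|f(s)\|_X\,ds,$$
which is legitimate because the hypothesis that $S$ maps into $L^p(x_0,x_1;X)$ together with \Cref{bochnerprops} guarantees that $s\mapsto g(x-s)f(s)$ is Bochner integrable. The substitution $w=x-s$ rewrites the inner integral as $\int_0^{x-x_0}|g(w)|\,\|f(x-w)\|_X\,dw$, and the further substitution $x=r+x_0$ brings the $L^p$ norm into precisely the form of the left-hand side of \Cref{C48}, taking $h=0$, $t_0=x_0$, $t_1=x_1$, and the nonnegative kernel $|g|$ in place of $g$.

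Applying \Cref{C48} then yields
$$\|Sf\|_{L^p(x_0,x_1;X)}\leq\int_0^{x_1-x_0}|g(w)|\left[\int_w^{x_1-x_0}\|f(r+x_0-w)\|_X^p\,dr\right]^{1/p}dw.$$
A final substitution $l=r+x_0-w$ turns the bracketed integral into $\int_{x_0}^{x_1-w}\|f(l)\|_X^p\,dl$, which is at most $\|f\|_{L^p(x_0,x_1;X)}^p$ since the integrand is nonnegative and $x_1-w\leq x_1$. Factoring the resulting constant out of the $w$-integral gives $\|Sf\|_{L^p}\leq\left(\int_0^{x_1-x_0}|g(w)|\,dw\right)\|f\|_{L^p}$, and taking the supremum over $f$ as in \Cref{operatornorm} delivers the claimed bound.

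I expect the computation itself to be routine once the three substitutions are lined up correctly; the only genuine care needed is in checking the hypotheses of \Cref{C48}, which demands a nonnegative, locally Lebesgue integrable kernel. I would therefore remark that $|g|$ inherits local integrability from the standing assumption that $S$ is a well-defined bounded operator on $L^p(x_0,x_1;X)$ (equivalently, that the defining integral converges for the relevant $f$). The endpoint case $p=\infty$ is either absorbed directly into \Cref{C48} or, if one prefers to be explicit, handled by the trivial pointwise estimate $\|Sf\|_{L^\infty}\leq\left(\int_0^{x_1-x_0}|g(w)|\,dw\right)\|f\|_{L^\infty}$.
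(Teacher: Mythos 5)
Your proposal is correct and follows essentially the same route as the paper's own proof: pull the $X$-norm inside by the triangle inequality, perform the substitutions $w=x-s$ and $x=r+x_0$, apply a Minkowski-type estimate, bound the resulting inner integral by $\|f\|_{L^p(x_0,x_1;X)}$, and treat $p=\infty$ by the trivial pointwise bound. The only cosmetic difference is that you route the key step through \Cref{C48} (as the paper does when proving \Cref{fracintbound}), whereas the paper's proof of this lemma applies \Cref{minkowskicorollary} directly; since \Cref{C48} is precisely that corollary packaged for this convolution structure, the two computations coincide.
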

\begin{proof}
    First, consider the case $1<p<\infty$.
    Set $w=x-s:$

    \begin{align*}
        \left[\int^{x_1}_{x_0}\left\|Sf(x)\right\|^p_X\,dx\right]^{\frac{1}{p}}
        &=\left[\int^{x_1}_{x_0}\left\|\int^{x}_{x_0}g(x-s)f(s)\,ds\right\|^p_X\,dx\right]^{\frac{1}{p}} \\
        &\leq\left[\int^{x_1}_{x_0}\left[\int^{x}_{x_0}|g(x-s)|\|f(s)\|_X\,ds\right]^p\,dx\right]^{\frac{1}{p}} \\
        &=\left[\int^{x_1}_{x_0}\left[\int^{x-x_0}_{0}|g(w)|\|f(x-w)\|_X\,dw\right]^p\,dx\right]^{\frac{1}{p}} \\
    \end{align*}

    Now set $x=r+x_0$:
    \begin{align*}
        \left[\int^{x_1}_{x_0}\left\|Sf(x)\right\|^p_X\,dx\right]^{\frac{1}{p}}\leq\left[\int^{x_1-x_0}_0\left[\int^{r}_{0}|g(w)|\|f(r+x_0-w)\|_X\,dw\right]^p\,dr\right]^{\frac{1}{p}} \\
    \end{align*}

    We then apply \Cref{minkowskicorollary} (as $p\neq\infty$) to the RHS:
    \begin{align*}
        \left[\int^{x_1}_{x_0}\left\|Sf(x)\right\|^p_X\,dx\right]^{\frac{1}{p}}\leq \int^{x_1-x_0}_0|g(w)|\left[\int^{x_1-x_0}_w\|f(r+x_0-w)\|^p_X\,dr\right]^\frac{1}{p}\,dw \\
    \end{align*}

    Notice that $r+x_0-s$ ranges from $x_0$ to $x_1-w$. Hence, $\left[\int^{x_1-x_0}_w\|f(r+x_0-w)\|^p_X\,dr\right]^\frac{1}{p}\leq\left[\int^{x_1}_{x_0}\|f(x)\|^p_X\,dx\right]^\frac{1}{p}$ giving us

    \begin{align*}
        \left[\int^{x_1}_{x_0}\left\|Sf(x)\right\|^p_X\,dx\right]^{\frac{1}{p}}       
        &\leq\int^{x_1-x_0}_0|g(w)|\|f\|_{L^p(x_0,x_1;X)}\,dw \\
        &=\|f\|_{L^p(x_0,x_1;X)}\int^{x_1-x_0}_0|g(w)|\,dw \\
    \end{align*}

    Hence, by definition of the operator norm, we find

    $$\|S\|_{\mathscr{L}(L^p(x_0,x_1;X))}\leq \int^{x_1-x_0}_0|g(w)|\,dw$$

    Now we consider the $L^\infty$ case.

    \begin{align*}
        \left\|Sf(x)\right\|_X&=\left\|\int^x_{x_0}g(x-s)f(s)\,ds\right\|_X\\
        &\leq \int^x_{x_0}g(x-s)\|f(s)\|_X\,ds 
    \end{align*}

    Using \Cref{holder} we find

    \begin{multline*}
        \left\|Af(x)\right\|_X\leq\int^x_{x_0}g(x-s)\|f(s)\|_X\,ds \\
        \leq \left|\int^x_{x_0}g(x-s)\,ds\right|\esssup_{s\in[x_0, x]}\|f(s)\|_X
    \end{multline*}

    Now we apply the substitution $w=x-s$:
    \begin{align*}
        \left|\int^x_{x_0}g(x-s)\,ds\right|&=\left|\int^0_{x-x_0}g(w)\,dw\right| \\
        &=\left|\int^{x-x_0}_0g(w)\,dw\right| \\
        &\leq\int^{x-x_0}_0|g(w)|\,dw
    \end{align*}

    Hence we conclude
    
    \begin{align*}
        \left\|Sf(x)\right\|_X&\leq\int^{x-x_0}_0|g(w)|\,dw\esssup_{s\in[x_0, x]}\|f(s)\|_X \\
        \esssup_{x\in[x_0, x_1]}\left\|Sf(x)\right\|_X&\leq\int^{x-x_0}_0|g(w)|\,dw\esssup_{x\in[x_0, x_1]}\|f(s)\|_X \\
        \|S\|_{\mathscr{L}(L^\infty(x_0,x_1;X))}&\leq \int^{x-x_0}_0|g(w)|\,dw
    \end{align*}

    Which concludes our proof.
    
\end{proof}

We first find the norm of $A$:
\begin{theorem}\label{anorm}
    $$\|A\|_{\mathscr{L}(L^p(x_0,x_1;X))}\leq E_\alpha\left(\lambda^{-1}(x_1-x_0)^\alpha\right)-1$$
    for all $1\leq p\leq \infty$.
\end{theorem}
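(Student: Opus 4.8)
The plan is to apply \Cref{normfind} directly, since $A$ is already written as a convolution operator of exactly the required form. First I would identify the kernel: writing $w=x-s$ and setting $h(u)=E_\alpha(\lambda^{-1}u^\alpha)$, the chain rule gives $\frac{\partial}{\partial x}E_\alpha(\lambda^{-1}(x-s)^\alpha)=h'(x-s)$, which depends on $x$ and $s$ only through their difference. Thus $A$ has the form $Af(x)=\int_{x_0}^x g(x-s)f(s)\,ds$ with $g(w)=\frac{d}{dw}E_\alpha(\lambda^{-1}w^\alpha)$, so \Cref{normfind} yields, for every $1\le p\le\infty$,
$$\|A\|_{\mathscr{L}(L^p(x_0,x_1;X))}\leq\int_0^{x_1-x_0}\left|\frac{d}{dw}E_\alpha(\lambda^{-1}w^\alpha)\right|\,dw.$$

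The second step is to remove the absolute value. Restricting to the relevant regime $\lambda>0$ (as in the Hille--Yosida setting, where $\lambda>\omega\ge0$), we have $\lambda^{-1}w^\alpha\ge0$ for $w\ge0$. Since the Mittag--Leffler series $E_\alpha(z)=\sum_{k=0}^\infty z^k/\Gamma(\alpha k+1)$ has nonnegative coefficients, both $z\mapsto E_\alpha(z)$ and $w\mapsto\lambda^{-1}w^\alpha$ are nondecreasing on $[0,\infty)$, so their composite has a nonnegative derivative. Hence $g(w)\ge0$ on $[0,x_1-x_0]$ and the absolute value may be dropped.

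Finally I would evaluate the resulting integral by the fundamental theorem of calculus,
$$\int_0^{x_1-x_0}\frac{d}{dw}E_\alpha(\lambda^{-1}w^\alpha)\,dw=E_\alpha\left(\lambda^{-1}(x_1-x_0)^\alpha\right)-E_\alpha(0),$$
and since only the $k=0$ term survives at $w=0$, we have $E_\alpha(0)=1/\Gamma(1)=1$. Combining the three steps gives the claimed bound for all $1\le p\le\infty$.

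I expect the only genuine subtleties to be bookkeeping rather than conceptual: justifying that $E_\alpha$ may be differentiated term by term (its radius of convergence is infinite by \Cref{mittaglefflerbound}, so this is legitimate on any bounded interval, and in particular validates writing $g=h'$), and pinning down the hypothesis on $\lambda$ so that $\lambda^{-1}>0$ and the monotonicity argument applies. If one wished to treat complex $\lambda$ with $\Re(\lambda)>\omega$, the absolute value could no longer simply be dropped, and one would instead bound $|g(w)|$ using $E_\alpha(|\lambda|^{-1}w^\alpha)$, at the cost of replacing $\lambda^{-1}$ by $|\lambda|^{-1}$ in the final estimate; but for the eventual application to \Cref{hilleyosida} the real case suffices.
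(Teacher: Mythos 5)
Your proposal is correct and follows essentially the same route as the paper: apply \Cref{normfind} to the convolution kernel $g(w)=\frac{d}{dw}E_\alpha(\lambda^{-1}w^\alpha)$, drop the absolute value by monotonicity of $E_\alpha$, and evaluate the integral by the fundamental theorem of calculus using $E_\alpha(0)=1$. If anything, you are more careful than the paper, which silently assumes $\lambda>0$ when discarding the absolute value and does not remark on term-by-term differentiation of the Mittag--Leffler series.
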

\begin{proof}

    From \Cref{normfind} we find that

    $$\|A\|_{\mathscr{L}(L^p(x_0,x_1;X))}\leq\int^{x_1-x_0}_0\left|\frac{\partial}{\partial w}E_\alpha\left(\lambda^{-1}w^{\alpha}\right)\right|\,dw$$

    Because $E_\alpha$ is an increasing function, we find

    \begin{align*}
        \|A\|_{\mathscr{L}(L^p(x_0,x_1;X))}&\leq\int^{x_1-x_0}_0\frac{\partial}{\partial w}E_\alpha\left(\lambda^{-1}w^{\alpha}\right)\,dw \\
        &\leq E_\alpha\left(\lambda^{-1}(x_1-x_0)^{\alpha}\right)-E_\alpha(0) \\
        &=E_\alpha\left(\lambda^{-1}(x_1-x_0)^{\alpha}\right)-1 \\
    \end{align*}

\end{proof}

Then we set appropriate bounds on $\omega$ resultingly:
\begin{theorem}
    Let $\Phi$ be the semigroup generated by the operator $_{x_0}J^\alpha_x$. Then, for all $L^p$ spaces where $1\leq p<\infty$, $\Phi$ is a type $(1, \omega)$ semigroup for all
    
    $$\omega\geq\frac{(x_1-x_0)^{\alpha}}{\Gamma(\alpha+1)}$$

    that is,

    $$\|\Phi(\alpha, t)\|_{\mathscr{L}(L^p(x_0,x_1;X))}\leq e^{\frac{(x_1-x_0)^{\alpha}}{\Gamma(\alpha+1)} t}$$
\end{theorem}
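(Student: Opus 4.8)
The plan is to combine the reduction in \Cref{resolventriangle} with the explicit norm bound in \Cref{anorm}, after which the whole statement collapses to a term-by-term comparison of two positive series. By \Cref{resolventriangle}, to show that $\Phi$ is of type $(1,\omega)$ it suffices to verify $\|A\|\le\sum_{k=1}^\infty \omega^k/\lambda^k$ for every admissible $\lambda$ (taken real with $\lambda>\omega>0$, so that $\lambda^{-1}>0$ and the Mittag-Leffler argument is real and positive, as \Cref{anorm} implicitly requires). Since \Cref{anorm} already bounds the left-hand side by $E_\alpha(\lambda^{-1}(x_1-x_0)^\alpha)-1$, it is enough to compare this quantity with the target geometric series.

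Next I would expand the Mittag-Leffler function termwise, noting that the $k=0$ term equals $1$ and cancels, to obtain
$$E_\alpha\!\left(\lambda^{-1}(x_1-x_0)^\alpha\right)-1=\sum_{k=1}^\infty\frac{1}{\lambda^k}\,\frac{(x_1-x_0)^{\alpha k}}{\Gamma(\alpha k+1)},$$
while the target is $\sum_{k=1}^\infty\omega^k/\lambda^k$. As every power $\lambda^{-k}$ is positive, it suffices to compare coefficients, i.e.\ to show $(x_1-x_0)^{\alpha k}/\Gamma(\alpha k+1)\le\omega^k$ for each $k\ge1$. Since the claim is for all $\omega\ge(x_1-x_0)^\alpha/\Gamma(\alpha+1)$, it is enough to treat the extremal value $\omega=(x_1-x_0)^\alpha/\Gamma(\alpha+1)$, which reduces the problem to the purely Gamma-theoretic inequality $\Gamma(\alpha+1)^k\le\Gamma(\alpha k+1)$.

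This last inequality is the crux, and I would establish it from log-convexity. Set $h(\alpha)=\ln\Gamma(\alpha+1)$, so that $h'(\alpha)=\psi(\alpha+1)$ is increasing by the monotonicity of the digamma function recorded after \Cref{digammasymp}; hence $h$ is convex on $[0,\infty)$, and $h(0)=\ln\Gamma(1)=0$. Writing $\alpha=\tfrac1k(k\alpha)+(1-\tfrac1k)\cdot0$ and applying convexity (Jensen's inequality) gives $h(\alpha)\le\tfrac1k h(k\alpha)$, that is $k\ln\Gamma(\alpha+1)\le\ln\Gamma(\alpha k+1)$, which exponentiates to $\Gamma(\alpha+1)^k\le\Gamma(\alpha k+1)$. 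Summing the resulting term-by-term bound back up, and using the convergence of both series (the left by \Cref{mittaglefflerbound}, the right as a geometric series in $\omega/\lambda<1$), closes the argument; taking the extremal $\omega$ yields the displayed exponential bound.

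I expect the Gamma inequality $\Gamma(\alpha+1)^k\le\Gamma(\alpha k+1)$ to be the only genuinely delicate point, with the log-convexity reduction being the key idea. The main subtleties to watch are keeping $\lambda$ real and positive so that $E_\alpha$ is evaluated at a real increasing argument, and the restriction to $1\le p<\infty$, which is what makes $C^\infty$ dense (via \Cref{smoothdense}) and hence lets the Hille--Yosida machinery underlying \Cref{resolventriangle} apply.
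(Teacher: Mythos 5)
Your proposal is correct and follows essentially the same route as the paper: both reduce via \Cref{resolventriangle} and \Cref{anorm} to a termwise comparison of the Mittag-Leffler series against the geometric series, which comes down to the inequality $\Gamma(\alpha+1)^k \le \Gamma(\alpha k + 1)$, established from log-convexity of the Gamma function. The only difference is cosmetic: you apply Jensen's inequality to $h(\alpha)=\ln\Gamma(\alpha+1)$ with $h(0)=0$, whereas the paper shows $\tfrac{1}{k}\log\Gamma(\alpha k+1)$ is increasing in $k$ via a mean-value-theorem argument --- two standard expressions of the same convexity fact.
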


\begin{proof}

    We apply the series definition on the norm of $A$ as found above:

    \begin{align*}
        E_\alpha\left(\lambda^{-1}(x_1-x_0)^\alpha\right)-1&=\left|\sum^\infty_{k=1}\frac{\lambda^{-k}(x_1-x_0)^{\alpha k}}{\Gamma(\alpha k+1)}\right| \\
        &=\sum^\infty_{k=1}\frac{\frac{(x_1-x_0)^{\alpha k}}{\Gamma(\alpha k+1)}}{\lambda^k} \\
    \end{align*}

    Which means $\Phi$ is of type $\omega$ as long as $\omega^k\geq \frac{(x_1-x_0)^{\alpha k}}{\Gamma(\alpha k+1)}\Rightarrow \omega\geq\frac{(x_1-x_0)^{\alpha}}{\Gamma^{\frac{1}{k}}(\alpha k+1)}$ for all integer $k\geq 1$.

    The denominator is not constant, and we wish to minimise it to get a lower bound on $\omega$. Consider the log of the function

    $$\log\Gamma^{\frac{1}{k}}(\alpha k+1)=\frac{\log\Gamma(\alpha k+1)}{k}$$

    Define $f(\alpha k)=\log\Gamma(\alpha k+1)$. Then we take the derivative:

    $$\frac{d}{dk}\frac{f(\alpha k)}{k}=\frac{\alpha kf'(\alpha k)-f(\alpha k)}{k^2}$$

    Consider the numerator. Recall that the Gamma function is log-convex, so $f$ is convex and $b>a\Rightarrow f'(b)>f'(a)$. Hence, by Mean Value Theorem

    $$\frac{f(\alpha k)-f(0)}{\alpha k-0}=\frac{f(\alpha k)}{\alpha k}=f'(c)<f'(\alpha k)$$

    Where $c\in(0, \alpha k)$. Therefore, $\alpha kf'(\alpha k)>f(\alpha k)$, so $\frac{f(\alpha k)}{k}$ is an increasing function and minimised at $k=1$. Therefore, as $\log$ is monotone, $\Gamma^{\frac{1}{k}}(\alpha k+1)$ is minimised at $k=1$, where it is equal to $\Gamma(\alpha+1)$. 

    Hence, for $\omega\geq\frac{(x_1-x_0)^{\alpha}}{\Gamma(\alpha+1)}$, we have

    $$\sum^\infty_{k=1}\frac{\omega^k}{\lambda^k}\geq E_\alpha\left(\lambda^{-1}(x_1-x_0)^\alpha\right)-1\geq\|A\|_{\mathscr{L}(L^p(x_0,x_1;X))}$$

    which by \Cref{resolventriangle} implies $\Phi$ is of type $(1, \omega)$.

\end{proof}

\begin{remark}
    Here, we demonstrated a method of computing the type of $T$ in general. 
\end{remark}

We will now introduce another classification of one-parameter semigroups, known as \emph{analytic semigroups}, which we will then apply to determine the continuity and analytic properties of the semigroup generated by the fractional integral.

\newpage

\section{Well-behavedness of the semigroup generated by \texorpdfstring{$_{x_0}J^\alpha_x$}{the alpha-order integral}}

In this section, we discuss the convergence properties of $T(\alpha, t)\coloneqq e^{J^\alpha t}$. We first discuss how the operator varies with $t$.

\begin{definition}\emph{\cite[Definition 9.1]{isem15}}\label{analyticdef}
For $\theta\in\left(0,\frac{\pi}{2}\right]$, consider the sector
$$\Sigma_\theta\coloneqq \{z\in\mathbb{C}\backslash \{0\}\mid |\arg(z)| <\theta\}$$
Then, an operator $T: \Sigma_\theta\cup \{0\}\rightarrow\mathscr{L}(X)$ is an \emph{analytic semigroup} of angle $\theta$ if the following conditions are satisfied:
\begin{enumerate}[label=(\roman*)]
\item $T: \Sigma_\theta\rightarrow \mathscr{L}(X)$ is holomorphic.
\item $\forall z,w\in\Sigma_\theta$, the identities below hold
$$T(z)T(w) = T(z+w)$$
and 
$$T(0) = I$$
\item For all $\theta'\in (0,\theta)$ and $f\in X$ we have
$$\lim_{\begin{smallmatrix} z\to 0 &\\ z\in\Sigma_{\theta'}\end{smallmatrix}}T(z)f = f$$
\item If for all $\theta'\in (0, \theta)$ we find that $$\sup\lim_{z\in\Sigma_{\theta'}}\|T(z)\|<\infty$$ then we say that $T$ is a \emph{bounded} linear semigroup.
\end{enumerate}

The \emph{generator}, $A$, of the analytic semigroup $T$ is defined to be the same generator as in the restriction $T: [0,\infty)\rightarrow \mathscr{L}(X)$.
\end{definition}

In particular, semigroups generated by bounded linear operators $A$ are examples of analytic semigroups.

\begin{theorem} \label{generatoranalytic}
    Let $A$ be a bounded linear operator and define

    $$T(z)\coloneqq e^{zA}=\sum^\infty_{n=0}\frac{z^nA^n}{n!}$$

    Then, $T$ is an analytic semigroup with $\theta=\frac{\pi}{2}$.
\end{theorem}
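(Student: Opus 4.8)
The plan is to verify the four conditions in Definition~\ref{analyticdef} directly for the entire function $T(z) = e^{zA} = \sum_{n=0}^\infty \frac{z^n A^n}{n!}$, where $A$ is bounded. The key input is that $A \in \mathscr{L}(X)$ has finite operator norm, so $\|A^n\| \le \|A\|^n$, which lets me control the series in operator norm everywhere on $\mathbb{C}$.
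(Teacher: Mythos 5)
Your plan is correct and follows essentially the same route as the paper's proof: the paper also verifies each condition of Definition~\ref{analyticdef} directly from the series, using the bound $\left\|\sum_{n=0}^\infty \frac{z^nA^n}{n!}\right\| \le e^{|z|\|A\|}$ to get convergence, a Cauchy-product rearrangement for the semigroup law, and norm estimates on the tail of the series for the limit $T(z)\to I$ and for holomorphy (showing $T'(z)=T(z)A$ via the difference quotient at $0$). Carrying out those standard estimates, which your key input $\|A^n\|\le\|A\|^n$ supports, completes the argument exactly as in the paper.
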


\begin{proof}
    First, note that

    $$\left\|\sum^\infty_{n=0}\frac{z^nA^n}{n!}\right\|\leq\sum^\infty_{n=0}\left\|\frac{z^nA^n}{n!}\right\|\leq\sum^\infty_{n=0}\frac{(|z|\|A\|)^n}{n!}=e^{|z|\|A\|}$$

    Hence, $\sum^\infty_{n=0}\frac{z^nA^n}{n!}$ converges and $e^{zA}$ is bounded.
    
    We next show that the semigroup property continues to hold for $z, w\in\Sigma_\theta$.

    \begin{align*}
    e^{zA}e^{wA}&=\sum^\infty_{i=0}\frac{z^iA^i}{n!}\sum^\infty_{j=0}\frac{w^jA^j}{j!} \\
    &=\sum^\infty_{n=0}\sum^n_{i=0}\frac{z^iw^{n-i}A^n}{i!(n-i)!} \\
    &=\sum^\infty_{n=0}\sum^n_{i=0}\binom{n}{i}z^iw^{n-i}\frac{A^n}{n!} \\
    &=\sum^\infty_{n=0}\frac{(z+w)^nA^n}{n!} \\
    &=e^{(z+w)A} \\
    \end{align*}

    Next, we show $\lim_{z\to0}\|T(z)-I\|=0$.

    Consider any $\varepsilon>0$. Let $\delta=\min\left(\frac{\varepsilon}{2\|A\|}, \frac{1}{\|A\|}\right)$. Suppose $|z|<\delta$. Then, $|z|\|A\|<1$ and

    \begin{align*}
        \left\|\sum^\infty_{n=0}\frac{z^nA^n}{n!}-I\right\|&=\left\|\sum^\infty_{n=1}\frac{z^nA^n}{n!}\right\| \\
        &\leq\sum^\infty_{n=1}\left\|\frac{z^nA^n}{n!}\right\| \\
        &\leq\sum^\infty_{n=1}\frac{(|z|\|A\|)^n}{n!} \\
        &<\sum^\infty_{n=1}\frac{(|z|\|A\|)^n}{2^{n-1}} \\
        &<\sum^\infty_{n=1}\frac{|z|\|A\|}{2^{n-1}}=2|z|\|A\|<\varepsilon
    \end{align*}

    Hence $\lim_{z\to0}\|T(z)-I\|=0$, so $\lim_{z\to0}\|T(z)f-f\|=0$ for any $f\in\mathscr{L}(X)$ and $\lim_{z\to0}T(z)f=f$.

    Finally, we prove that $T'(z)$ exists for $z\in\Sigma_\theta$, implying $T$ is holomorphic over $\Sigma_\theta$. First, we find by the semigroup property that

    $$\lim_{h\to 0}\frac{T(z+h)-T(z)}{h}=T(z)\lim_{h\to 0}\frac{T(h)-I}{h}$$

    Since $T(z)$ is bounded, it suffices to show that $\lim_{h\to 0}\frac{T(h)-I}{h}$ is convergent. We claim that this limit is equal to $A$.

    Consider any $\varepsilon>0$. Let $\delta=\min\left(\frac{\varepsilon}{\|A\|^2}, \frac{1}{\|A\|}\right)$. Then, if $|h|<\delta$,

    \begin{align*}
        \left\|\frac{\sum^\infty_{n=0}\frac{h^nA^n}{n!}-I}{h}-A\right\|&=\left\|\frac{\sum^\infty_{n=1}\frac{h^nA^n}{n!}}{h}-A\right\| \\
        &=\left\|\sum^\infty_{n=1}\frac{h^{n-1}A^n}{n!}-A\right\| \\
        &=\left\|\sum^\infty_{n=0}\frac{h^nA^{n+1}}{(n+1)!}-A\right\| \\
        &=\left\|\sum^\infty_{n=1}\frac{h^nA^{n+1}}{(n+1)!}\right\| \\
        &\leq\|A\|\sum^\infty_{n=1}\left\|\frac{h^nA^n}{(n+1)!}\right\| \\
        &\leq\|A\|\sum^\infty_{n=1}\frac{(|h|\|A\|)^n}{(n+1)!} \\
        &\leq\|A\|\sum^\infty_{n=1}\frac{|h|\|A\|}{2^n} \\
        &\leq\|A\||h|\|A\|<\varepsilon \\
    \end{align*}

    Hence we conclude that $T(z)$ is holomorphic and $T'(z)=T(z)A$.

\end{proof}

Hence, this ensures that, by extending $t$ to complex numbers, $\Phi(\alpha,t)$ is an analytic semigroup of angle $\frac{\pi}{2}$.

\begin{remark}
    Weaker conditions, such as strong continuity or local Lipschitz continuity for any $\delta<|z|$, easily follow from the fact that a semigroup $T$ is analytic.
\end{remark}

We also note a simple corollary:

\begin{corollary} \label{Kanalytic}
    Let $K$ be a complex number with real part $>0$. Let $S(z)=T(\alpha, Kz)$, where $T$ is an analytic semigroup. Then $S$ too is an analytic semigroup with $\theta=\frac{\pi}{2}$.
\end{corollary}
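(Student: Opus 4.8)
The plan is to reduce the statement to \Cref{generatoranalytic} by exhibiting $S$ as the exponential semigroup of a single bounded operator. The key observation is the termwise identity
$$S(z)=T(\alpha,Kz)=e^{Kz\,{_{x_0}J^\alpha_x}}=e^{z\left(K\,{_{x_0}J^\alpha_x}\right)},$$
valid because the scalar $K$ commutes with the operator, so that $(Kz)^k\,({_{x_0}J^\alpha_x})^k=z^k\,(K\,{_{x_0}J^\alpha_x})^k$ in every term of the defining series. Writing $B\coloneqq K\,{_{x_0}J^\alpha_x}$, this says precisely that $S(z)=e^{zB}$, so $S$ already has the exact form to which \Cref{generatoranalytic} applies.

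First I would check that $B$ is a bounded linear operator: by \Cref{fracintbound}, ${_{x_0}J^\alpha_x}\in\mathscr{L}(L^p(x_0,x_1;X))$, and scaling by $K$ keeps it bounded with $\|B\|=|K|\,\|{_{x_0}J^\alpha_x}\|<\infty$. \Cref{generatoranalytic} then applies verbatim with $A=B$ and yields that $z\mapsto e^{zB}$ is an analytic semigroup of angle $\frac{\pi}{2}$, which is the desired conclusion. If one prefers to avoid the identity, the algebraic axioms of \Cref{analyticdef} can instead be inherited directly from $T$: the composition law reads $S(z)S(w)=T(\alpha,Kz)T(\alpha,Kw)=T(\alpha,K(z+w))=S(z+w)$, with $S(0)=T(\alpha,0)=I$, and strong continuity at the origin follows from that of $T$ since $Kz\to0$ as $z\to0$.

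The one point that genuinely needs care — and the main obstacle — is holomorphy on the full sector $\Sigma_{\pi/2}$, which is where the hypothesis $\Re(K)>0$ enters. Since the generator ${_{x_0}J^\alpha_x}$ is bounded, the bound $\|e^{w\,{_{x_0}J^\alpha_x}}\|\le e^{|w|\,\|{_{x_0}J^\alpha_x}\|}$ established in the proof of \Cref{generatoranalytic} in fact holds for every $w\in\mathbb{C}$, so $T(\alpha,\cdot)$ extends to an entire operator-valued function. Consequently $S(z)=T(\alpha,Kz)$ is well defined and holomorphic for all $z$ even though the rotation $z\mapsto Kz$ can carry $\Sigma_{\pi/2}$ outside itself when $\arg K\neq0$. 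I would remark that $\Re(K)>0$ is therefore sufficient but stronger than strictly necessary for the analytic-semigroup conclusion; its natural role is to keep $|\arg K|<\frac{\pi}{2}$, so that the restriction of $S$ to $[0,\infty)$ stays inside the original right-half-plane sector and is an honest forward-time reparametrisation of $T$ whose infinitesimal generator is $B$, which is what justifies calling $S$ the semigroup \emph{generated} by $B$.
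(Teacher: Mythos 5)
Your proof is correct, and it takes a genuinely different route from the paper's. The paper verifies the axioms of \Cref{analyticdef} for $S$ directly: it first asserts that $\Re(z)>0$ implies $\Re(Kz)>0$ (so that $S(z)$ is well defined), then checks the composition law, the limit $S(z)\to I$ as $z\to0$, and differentiability via the chain rule, $S'(z)=K\,{_{x_0}J^\alpha_x}\,T(\alpha,Kz)$. You instead write $S(z)=e^{z\left(K\,{_{x_0}J^\alpha_x}\right)}$ and invoke \Cref{generatoranalytic} with the bounded operator $B=K\,{_{x_0}J^\alpha_x}$, so all four axioms come for free. Notably, your route repairs a real defect in the paper's first step: the implication $\Re(z)>0\Rightarrow\Re(Kz)>0$ is false in general (take $K=z=e^{i\pi/3}$, both with real part $\tfrac{1}{2}$, yet $Kz=e^{2i\pi/3}$ has real part $-\tfrac{1}{2}$); this is exactly the issue you flag about the map $z\mapsto Kz$ carrying $\Sigma_{\pi/2}$ outside itself. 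Because the generator is bounded, the exponential series converges for every complex argument, so $S$ is entire and the sector problem disappears --- which also substantiates your remark that $\Re(K)>0$ is stronger than needed for the stated conclusion. The paper's direct verification buys nothing that your reduction loses, since \Cref{generatoranalytic} is itself proved by the same series manipulations; your argument is shorter, explicitly identifies the generator of $S$ as $K\,{_{x_0}J^\alpha_x}$, and is the one that actually closes the gap.
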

\begin{proof}
    First, if $\Re(z)>0$, then $\Re(Kz)>0$, so $S(z)$ is well defined. Also, $S(z)S(w)=T(\alpha, Kz)T(\alpha, Kw)=T(\alpha, K(z+w))=S(z+w)$. We have $\lim_{z\to0}S(z)=\lim_{z\to0}T(\alpha, Kz)=T(\alpha, \lim_{z\to0} Kz)=I$ by analyticity of $T$ w.r.t. $t$, and finally we have $S'(z)=K\frac{\partial}{\partial z}T(\alpha, z)=KJ^\alpha T(\alpha, z)$ by chain rule. Hence, $S(z)$ is a semigroup.
\end{proof}

%The analyticity of the semigroup $T$ is useful for showing that the solutions to PDEs involving $_{x_0}J^\alpha_x$ are well-behaved. For example,

Next, we talk about the properties of $\Phi$ w.r.t. $\alpha$. Because $J^\alpha$ does not form a uniformly continuous semigroup, we are unable to make conditions that are as strong. However, we note the following:

\begin{theorem}\label{alphalocalipschitz}
    For $\alpha>0$ and $1<p\leq\infty$, $\Phi(\alpha, z)$ is locally Lipschitz-continuous. That is, there exist $\delta, M>0$ such that

    $$\alpha_1, \alpha_2\in(\alpha-\delta, \alpha+\delta)\Rightarrow \|\Phi(\alpha_1, z)-\Phi(\alpha_2, z)\|\leq M|\alpha_1-\alpha_2|$$
\end{theorem}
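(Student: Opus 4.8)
The plan is to estimate $\Phi(\alpha_1, z) - \Phi(\alpha_2, z)$ term by term through its series expansion and reduce the whole problem to an $L^1$-estimate on convolution kernels. Since ${}_{x_0}J^0_x = I$ for every order, the $k = 0$ terms cancel and I may write
\begin{equation*}
\Phi(\alpha_1, z) - \Phi(\alpha_2, z) = \sum_{k=1}^\infty \left({}_{x_0}J^{k\alpha_1}_x - {}_{x_0}J^{k\alpha_2}_x\right)\frac{z^k}{k!}.
\end{equation*}
Each power ${}_{x_0}J^\beta_x$ is the convolution operator $f \mapsto \int_{x_0}^x g_\beta(x - s) f(s)\, ds$ with kernel $g_\beta(w) = w^{\beta-1}/\Gamma(\beta)$, so the difference at level $k$ is convolution against $g_{k\alpha_1} - g_{k\alpha_2}$. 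First I would invoke \Cref{normfind} to bound each operator norm by the $L^1$ norm of the kernel difference,
\begin{equation*}
\left\|{}_{x_0}J^{k\alpha_1}_x - {}_{x_0}J^{k\alpha_2}_x\right\| \leq \int_0^{x_1 - x_0} \left|g_{k\alpha_1}(w) - g_{k\alpha_2}(w)\right|\, dw.
\end{equation*}

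Next I would extract the factor $|\alpha_1 - \alpha_2|$ by applying the mean value theorem in the order variable. Differentiating $g_\beta(w) = w^{\beta-1}/\Gamma(\beta)$ gives $\partial_\beta g_\beta(w) = g_\beta(w)\left(\ln w - \psi(\beta)\right)$, where $\psi$ is the digamma function. Fixing $\delta \in (0, \alpha)$ so that all orders in play stay positive and applying the mean value theorem pointwise in $w$ between $k\alpha_1$ and $k\alpha_2$ yields, for $\alpha_1, \alpha_2 \in (\alpha - \delta, \alpha + \delta)$,
\begin{equation*}
\left|g_{k\alpha_1}(w) - g_{k\alpha_2}(w)\right| \leq k\,|\alpha_1 - \alpha_2| \sup_{k(\alpha-\delta)\le\beta\le k(\alpha+\delta)} \frac{w^{\beta-1}}{\Gamma(\beta)}\left|\ln w - \psi(\beta)\right|.
\end{equation*}
Integrating in $w$ and writing $C_k := \sup_{k(\alpha-\delta)\le\beta\le k(\alpha+\delta)} \int_0^{x_1-x_0} g_\beta(w)\,|\ln w - \psi(\beta)|\, dw$, the triangle inequality applied to the series gives
\begin{equation*}
\|\Phi(\alpha_1, z) - \Phi(\alpha_2, z)\| \leq |\alpha_1 - \alpha_2| \sum_{k=1}^\infty \frac{|z|^k}{k!}\, k\, C_k =: M\,|\alpha_1 - \alpha_2|.
\end{equation*}

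It then remains to show $M < \infty$. The integral defining $C_k$ is finite for every positive order, since $w^{\beta-1}|\ln w|$ is integrable near $w = 0$ once $\beta > 0$; concretely $\int_0^{x_1-x_0} g_\beta(w)\, dw = (x_1-x_0)^\beta/\Gamma(\beta+1)$, while the logarithmic weight contributes only polynomial-in-$\beta$ corrections together with $\psi(\beta) \sim \ln \beta$ from \Cref{digammasymp}. Hence $C_k$ grows no faster than $(x_1 - x_0)^{k\alpha}\ln(k\alpha)/\Gamma(k\alpha)$ up to factors polynomial in $k$, and the weights $|z|^k/k!$ force $\sum_k \frac{|z|^k}{k!} k C_k$ to converge; indeed the $\Gamma(k\alpha)^{-1}$ decay alone already guarantees convergence in the spirit of \Cref{mittaglefflerbound}, and the extra $1/k!$ is more than sufficient. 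The resulting finite $M$ depends only on $z$, $\alpha$, $\delta$ and $x_1 - x_0$, which is precisely the asserted local Lipschitz estimate.

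The main obstacle will be the $w$-integral in $C_k$: one must produce a bound that is simultaneously uniform over the order-interval $[k(\alpha-\delta), k(\alpha+\delta)]$ and explicit enough in $k$ to be summed against $|z|^k/k!$. The competing effects all enter here, namely the logarithmic singularity of the kernel derivative at $w = 0$, the unbounded growth of $\psi(\beta)$ and of $\ln(x_1 - x_0)$ when $x_1 - x_0 > 1$, set against the rapid decay of $1/\Gamma(\beta)$; splitting the integral at $w = 1$ to separate the two signs of $\ln w$ is where the routine-looking estimate must actually be carried out. I note finally that the dependence on $p$ enters only through \Cref{normfind}, which supplies the kernel bound exactly on the asserted range $1 < p \le \infty$.
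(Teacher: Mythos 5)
Your proposal follows essentially the same route as the paper's proof: expand the semigroup in its exponential series, reduce each term to an $L^1$ bound on the kernel difference via \Cref{normfind}, extract the factor $|\alpha_1-\alpha_2|$ from the order-derivative $w^{\beta-1}(\ln w-\psi(\beta))/\Gamma(\beta)$ by the mean value theorem, and obtain convergence of the resulting series by splitting the $w$-integral at $w=1$ and playing the $1/k!$ and $1/\Gamma$ decay against $\psi\sim\ln$, which is exactly the paper's strategy. One technical caveat: since the mean value point $\beta^*(w)$ depends on $w$, pointwise MVT yields the bound $\int_0^{x_1-x_0}\sup_\beta\left|\partial_\beta g_\beta(w)\right|dw$ rather than your $C_k=\sup_\beta\int_0^{x_1-x_0}\left|\partial_\beta g_\beta(w)\right|dw$; to get $C_k$ as written, express the kernel difference as $\int_{k\alpha_2}^{k\alpha_1}\partial_\beta g_\beta(w)\,d\beta$ and apply Fubini (or simply work with the integral-of-sup, which your uniform-in-$\beta$ estimates bound equally well; the paper sidesteps this by applying the mean value inequality to the full map $\alpha'\mapsto\Phi(\alpha',z)f(x)$ rather than kernel by kernel).
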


\begin{proof}
    WLOG let $\alpha_1\leq\alpha_2$. The series representation of $\Phi(\alpha_1, z), \Phi(\alpha_2, z)$ converge absolutely in the norm topology. Hence a.e. we can write

    \begin{multline*}
        \|\Phi(\alpha_1, z)f(x)-\Phi(\alpha_2, z)f(x)\|_X \\
        =\left\|\sum^\infty_{n=1}\frac{z^n}{n!}\int^{x}_{x_0}\left[\frac{(x-s)^{n\alpha_1-1}}{\Gamma(n\alpha_1)}-\frac{(x-s)^{n\alpha_2-1}}{\Gamma(n\alpha_2)}\right]f(s)\,ds\right\|_X \\
        \leq\sum^\infty_{n=1}\frac{|z|^n}{n!}\int^{x}_{x_0}\left|\frac{(x-s)^{n\alpha_1-1}}{\Gamma(n\alpha_1)}-\frac{(x-s)^{n\alpha_2-1}}{\Gamma(n\alpha_2)}\right|\|f(s)\|_X\,ds
    \end{multline*}

    Note that if we take $w=x-s$, $$\frac{\partial}{\partial\beta}\frac{(x-s)^{\beta-1}}{\Gamma(\beta)}=\frac{w^{\beta-1}(\log w-\psi(\beta))}{\Gamma(\beta)}$$

    Hence, by mean value inequality,

    \begin{multline*}
        \|\Phi(\alpha_1, z)f(x)-\Phi(\alpha_2, z)f(x)\|_X \\
        \leq\sup_{\alpha'\in(\alpha_1, \alpha_2)}\sum^\infty_{n=1}\frac{|z|^n}{(n-1)!}\int^{x}_{x_0}\frac{(x-s)^{n\alpha'-1}|\log (x-s)-\psi(n\alpha')|}{\Gamma(n\alpha')}|\alpha_2-\alpha_1|f(s)\,ds
    \end{multline*}

    Note $(\alpha_1, \alpha_2)\subset(\alpha-\delta, \alpha+\delta)$.

    Consider the case $x_1-x_0\geq1$. Using \Cref{normfind}, we find for $1\leq p\leq\infty$:

    \begin{multline*}
        \|\Phi(\alpha_1, z)-\Phi(\alpha_2, z)\|_{\mathscr{L}(L^p(x_0,x_1, X))} \\
        \leq\sup_{\alpha'\in(\alpha_1, \alpha_2)}\sum^\infty_{n=1}\frac{|z|^n}{(n-1)!}\int^{x_1-x_0}_0\frac{w^{n\alpha'-1}|\log w-\psi(\alpha')|}{\Gamma(n\alpha')}|\alpha_2-\alpha_1|\,dw \\
    \end{multline*}
    \begin{multline*}
        \|\Phi(\alpha_1, z)-\Phi(\alpha_2, z)\|_{\mathscr{L}(L^p(x_0,x_1, X))} \\
        \leq|\alpha_2-\alpha_1|\sum^\infty_{n=1}\frac{|z|^n}{(n-1)!}\int^1_0\frac{w^{\alpha-\delta-1}(\psi(n(\alpha+\delta))-\log w)}{\Gammamin}\,dw \\
        +|\alpha_2-\alpha_1|\sum^\infty_{n=1}\frac{|z|^n}{(n-1)!}\sup_{\alpha'\in(\alpha_1, \alpha_2)}\int^{x_1-x_0}_1\frac{w^{n(\alpha+\delta)-1}|\log w-\psi(n\alpha')|}{\Gamma(n\alpha')}\,dw 
    \end{multline*}

    Where $\Gammamin$ is defined as in \Cref{gammamin}. Consider the first sum. We can rewrite as

    \begin{align*}
        &\sum^\infty_{n=1}\frac{|z|^n}{\Gammamin(n-1)!}\sup_{\alpha'\in(\alpha_1, \alpha_2)}\left[\int^1_0w^{n\alpha'-1}\psi(n(\alpha+\delta))\,dw-\int^1_0w^{n\alpha'-1}\log w\,dw\right]\\
        =&\sum^\infty_{n=1}\frac{|z|^n}{\Gammamin(n-1)!}\left[\frac{\psi(n(\alpha+\delta))}{\alpha-\delta}+\frac{1}{(\alpha-\delta)^2}\right]\\
    \end{align*}
    Because $\alpha>\delta$, the integrals converge. From \Cref{digammasymp}, $\psi(n((\alpha+\delta)))\sim\ln n+\ln(\alpha+\delta)$. The ratio of consecutive terms is

    $$\frac{\frac{|z|^{n+1}}{\Gammamin n!}\left[\frac{\ln(n+1)+\ln(\alpha+\delta)}{(\alpha-\delta)}+\frac{1}{(\alpha-\delta)^2}\right]}{\frac{|z|^n}{\Gammamin (n-1)!}\left[\frac{\ln n+\ln(\alpha+\delta)}{(\alpha-\delta)}+\frac{1}{(\alpha-\delta)^2}\right]}=\frac{|z|}{n}\frac{(\alpha-\delta)(\ln(n+1)+\ln(\alpha+\delta))+1}{(\alpha-\delta)(\ln n+\ln(\alpha+\delta))+1}$$

    We can clearly see that the left fraction converges to $0$ as $n$ approaches infinity. Furthermore, we find asymptotically $(\alpha-\delta)(\ln(n+1)+\ln(\alpha+\delta))+1\sim(\alpha-\delta)(\ln n+\ln(\alpha+\delta))+1+\frac{\alpha-\delta}{n}$, so the right fraction converges to $1$. Hence, by the ratio test, the first sum converges to a finite number that we shall label $K$.

    Now consider the second sum:

    $$\sum^\infty_{n=1}\frac{|z|^n}{n!}\int^{x_1-x_0}_1\frac{w^{n\alpha'-1}|\log w-\psi(n\alpha')|}{\Gamma(n\alpha')}\,dw$$
    
    For large enough $n$, we know that $\psi(n(\alpha-\delta))>\log(x-s)$. Hence, $|\log w-\psi(n\alpha')|<\psi(n\alpha')$ (and $n(\alpha-\delta)>\xmin$ as defined in \Cref{gammamin}, meaning $\Gamma(n\alpha')$ is increasing) for large enough $n$. Hence, for large enough $n$, the $n$th term is at most

    $$\frac{|z|^n}{n!}\int^{x_1-x_0}_1\frac{w^{n\alpha'-1}\psi(n\alpha')}{\Gamma(n\alpha')}\leq\frac{|z|^n}{n!}\frac{(x_1-x_0)^{n(\alpha+\delta)}\psi(n(\alpha+\delta))}{\Gamma(n(\alpha-\delta))}$$

    The ratio of consecutive terms as $n\to\infty$ is

    \begin{align*}
        &\lim_{n\to\infty}\frac{\frac{|z|^{n+1}(x_1-x_0)^{(n+1)(\alpha+\delta)}\psi((n+1)(\alpha+\delta))}{(n+1)!\Gamma((n+1)(\alpha-\delta)+1)}}{\frac{|z|^n(x_1-x_0)^{n(\alpha+\delta)}\psi(n(\alpha+\delta))}{n!\Gamma(n(\alpha-\delta)+1)}} \\
        =&\lim_{n\to\infty}\frac{|z|(x_1-x_0)^{\alpha+\delta}}{(n+1)}\frac{\Gamma(n(\alpha-\delta)+1)}{\Gamma((n+1)(\alpha-\delta)+1)}\frac{\psi(n(\alpha-\delta)+1)}{\psi((n+1)(\alpha-\delta)+1)} \\
        =&\lim_{n\to\infty}\frac{|z|(x_1-x_0)^{\alpha+\delta}}{(n+1)}(n(\alpha-\delta)+1)^{-(\alpha+\delta)}\cdot1 \\
        =&\,0
    \end{align*}

    Hence, by ratio test, the second sum also converges. Say it converges to $L$.

    We find

    $$\|\Phi(\alpha_1, z)-\Phi(\alpha_2,z)\|<|\alpha_1-\alpha_2|(K+L)=M|\alpha_1-\alpha_2|$$

    If $x_1-x_0\leq1$, we take $K=M$ with the same $\alpha$. 

    Hence, there exists $M, \delta>0$ satisfying the properties, and $\Phi(\alpha, z)$ is locally Lipschitz-continuous.
\end{proof}
\begin{remark}
    We note that the only condition on $\delta$ we have defined is that it is smaller than $\alpha$. We can equivalently say that $\Phi$ is Lipschitz continuous on the interval $(a, b)$ where $0<a<b<\infty$.
\end{remark}

We can also prove strong continuity w.r.t. $\alpha$:
\begin{theorem}\label{alphastrongcont}
    For every $\alpha\geq 0, f\in L^p(x_0,x_1;X)$, we find

    $$\lim_{h\to0}\|\Phi(\alpha+h, z)f-\Phi(\alpha, z)f\|_{L^p(x_0,x_1;X)}=0$$
\end{theorem}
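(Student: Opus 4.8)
The plan is to deduce strong $\alpha$-continuity of $\Phi$ from two facts: that the family $\{J^\beta\}_{\beta\ge 0}$ is strongly continuous in its own index $\beta$, and that the series defining $\Phi$ converges uniformly in a neighbourhood of $\alpha$. Writing
$$\Phi(\alpha+h,z)f-\Phi(\alpha,z)f=\sum_{k=0}^\infty\frac{z^k}{k!}\left(J^{k(\alpha+h)}f-J^{k\alpha}f\right),$$
I would first bound the $k$-th summand in $L^p$ using \Cref{fracintbound}: for $\alpha+|h|\le b$ each term is dominated by $\frac{2}{\Gammamin}\cdot\frac{(|z|\,D_b)^k}{k!}\,\|f\|_{L^p(x_0,x_1;X)}$, where $D_b:=\max\{1,(x_1-x_0)^b\}$ and we used $\Gamma(\,\cdot\,+1)\ge\Gammamin$. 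This bound is summable and independent of $h$, so dominated convergence over $\mathbb{N}$ with the counting measure lets me interchange $\lim_{h\to0}$ with the summation. The whole statement then reduces to showing that each summand vanishes, i.e.\ that $\beta\mapsto J^\beta f$ is continuous in $L^p$.

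For the continuity of $\beta\mapsto J^\beta f$, I would exploit the composition law $J^\beta J^\gamma=J^{\beta+\gamma}$ together with $J^0=I$ (both used throughout the paper), so that $\{J^\beta\}_{\beta\ge0}$ forms a semigroup in the index $\beta$. Since $J^{\beta+\varepsilon}f-J^\beta f=J^\beta(J^\varepsilon f-f)$ and $\|J^\beta\|$ is uniformly bounded for $\beta$ in any bounded set (again by \Cref{fracintbound}), continuity at every $\beta\ge0$ follows from the single one-sided limit
$$\lim_{\varepsilon\to0^+}\|J^\varepsilon f-f\|_{L^p(x_0,x_1;X)}=0.$$

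This last limit is the heart of the argument and the step I expect to be the main obstacle. I would establish it by a density (three-$\varepsilon$) argument, confined to $1\le p<\infty$: the operators $\{J^\varepsilon:0\le\varepsilon\le1\}$ are uniformly bounded, so by \Cref{smoothdense} it suffices to verify the limit on the dense class $C^\infty(I;X)$. For continuous $g$, the substitution $w=x-s$ gives $J^\varepsilon g(x)=\frac{1}{\Gamma(\varepsilon)}\int_0^{x-x_0}w^{\varepsilon-1}g(x-w)\,dw$, and the kernel $w^{\varepsilon-1}/\Gamma(\varepsilon)$ acts as an approximate identity concentrating at $w=0$ as $\varepsilon\to0^+$, since its total mass $\frac{(x-x_0)^\varepsilon}{\Gamma(\varepsilon+1)}\to1$. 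Hence $J^\varepsilon g(x)\to g(x)$ for each $x>x_0$, and because these values are uniformly bounded on the finite interval, dominated convergence upgrades the pointwise limit to $L^p$-convergence. The delicate points are the behaviour at the left endpoint $x_0$ (where $J^\varepsilon g(x_0)=0\ne g(x_0)$, but this is a null set) and making the approximate-identity estimate clean enough to apply dominated convergence uniformly in $\varepsilon$.

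Finally I would remark on the excluded regimes. The case $p=\infty$ with $\alpha>0$ is already covered by the operator-norm continuity of \Cref{alphalocalipschitz}, which is strictly stronger than strong continuity. Strong continuity at $\alpha=0$, however, genuinely requires $p<\infty$: for general $f\in L^\infty$ the function $J^\varepsilon f$ is continuous-in-$x$ and cannot converge uniformly to a discontinuous $f$, so the approximate-identity step fails in the $L^\infty$ norm.
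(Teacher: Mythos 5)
Your proposal is correct, and it takes a genuinely different route from the paper's own proof. The paper splits the problem in two: for $\alpha>0$ it simply invokes the local Lipschitz continuity of \Cref{alphalocalipschitz}, and only the point $\alpha=0$ is treated directly, by cutting the exponential series at a finite index $k$, controlling the first $k$ terms via the strong continuity of $\beta\mapsto{_{x_0}J^{\beta}_{x}}f$ at $\beta=0^+$, and estimating the tail with \Cref{fracintbound}; crucially, that strong continuity of the fractional-integral family is only \emph{cited} there as known, never proved. Your argument instead treats all $\alpha\ge0$ at once: the $h$-independent summable bound $\frac{2}{\Gammamin}\frac{(|z|D_b)^k}{k!}\|f\|_{L^p(x_0,x_1;X)}$ justifies interchanging $\lim_{h\to0}$ with the sum, reducing the theorem to continuity of $\beta\mapsto {_{x_0}J^{\beta}_{x}}f$; the index-semigroup law $J^{\beta}J^{\gamma}=J^{\beta+\gamma}$ together with uniform boundedness of $\|J^{\beta}\|$ on bounded $\beta$-sets reduces that to the single limit $\lim_{\varepsilon\to0^+}\|J^{\varepsilon}f-f\|_{L^p(x_0,x_1;X)}=0$, which you then actually prove by density plus an approximate-identity estimate. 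What each buys: the paper's route is shorter and more quantitative (explicit $k$ and $\delta$), but rests on an unproved citation and on the previous section's Lipschitz theorem (which, as stated, excludes $p=1$, so the paper's reduction leaves a small hole there); yours is self-contained, fills exactly the cited-but-unproved gap, covers $p=1$, and makes explicit a hypothesis the paper's statement silently omits: the restriction $1\le p<\infty$ is genuinely needed at $\alpha=0$, since $J^{\varepsilon}f$ is continuous in $x$ and so cannot converge in the essential-supremum norm to a discontinuous $f$. Your remark that the $p=\infty$, $\alpha>0$ regime is already handled by \Cref{alphalocalipschitz} is also correct.

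One repair worth making in your density step: \Cref{smoothdense} produces approximants in $C^{\infty}(I;X)$ on the \emph{open} interval $I=(x_0,x_1)$, and such functions need not be bounded near the endpoints, so the constant dominating function in your dominated-convergence step is not automatic. This is harmless to fix: truncate the approximant, replacing $\phi(x)$ by $R\,\phi(x)/\max\{R,\|\phi(x)\|_X\}$ and letting $R\to\infty$, to obtain a bounded continuous approximant at negligible cost in $L^p$; for such $g$ your kernel-concentration argument (total mass $\frac{(x-x_0)^{\varepsilon}}{\Gamma(\varepsilon+1)}\to1$, and vanishing mass away from $w=0$ because $1/\Gamma(\varepsilon)\to0$) goes through verbatim, with the left endpoint a null set as you note.
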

\begin{proof}
    Local Lipschitz continuity for $\alpha>0$ implies strong continuity for $\alpha>0$ (Suppose $M, \delta'$ satisfy the local Lipschitz condition. For every $\varepsilon>0$, just set $\delta=\min\left\{\frac{\varepsilon}{M},\delta'\right\}$). Hence, we only need to check $\alpha=0$, i.e. show that for every $\varepsilon$, there exists $\delta$ such that

    $$\lim_{h\to0^+}\|\Phi(h, z)f-e^zf\|_{L^p(x_0,x_1;X)}=0$$

    Recall that $J^\alpha$ is known to be strongly continuous. Hence, for every $\varepsilon_0$, there exists a $\delta_0$ such that

    $$0<h<\delta_0\Rightarrow \|\Phi(h, z)f-e^zf\|_{L^p(x_0,x_1;X)}<\varepsilon_0$$

    Now let $\varepsilon_0=\frac{\varepsilon e^{-|z|}}{2\|f\|_{L^p(x_0,x_1;X)}}$ and set $\delta$ correspondingly. If $x_1-x_0\geq 1$, then let $k$ be the smallest integer such that 
    
    $$\frac{|z|^k}{k!}<\frac{\varepsilon e^{-|z|(x_1-x_0)}}{2(x_1-x_0)\left(1+\frac{1}{\Gammamin}\right)\|f\|_{L^p(x_0,x_1;X)}}$$

    Otherwise, let $k$ be the smallest integer such that

    $$\frac{|z|^k}{k!}<\frac{\varepsilon e^{-|z|}}{2\left(1+\frac{1}{\Gammamin}\right)\|f\|_{L^p(x_0,x_1;X)}}$$

    (This is possible as $\frac{|z|^k}{k!}$ is a decreasing function that approaches $0$ as $k\to\infty$.) Let $h<\delta=\min\left\{\frac{\delta_0}{k}, \frac{1}{k}\right\}$.

    Then, we have:

    \begin{align*}
        &\|\Phi(h, z)f-e^zf\|_{L^p(x_0,x_1;X)} \\
        \leq&\sum^{k-1}_{n=0}\frac{|z|^n}{n!}\|{_{x_0}J^{nh}_{x}}f-f\|_{L^p(x_0,x_1;X)}+\sum^{\infty}_{n=k}\frac{|z|^n}{n!}\|{_{x_0}J^{nh}_{x}}f-f\|_{L^p(x_0,x_1;X)} \\
        \leq&\sum^{k-1}_{n=0}\frac{|z|^n}{n!}\varepsilon_0+\sum^{\infty}_{n=k}\frac{|z|^n}{n!}\|{_{x_0}J^{nh}_{x}}f\|_{L^p(x_0,x_1;X)}+\sum^{\infty}_{n=k}\frac{|z|^n}{n!}\|f\|_{L^p(x_0,x_1;X)} \\
        \leq&\,e^{|z|}\varepsilon_0+\sum^{\infty}_{n=k}\frac{|z|^n}{n!}\frac{(x_1-x_0)^{hn}}{\Gamma(nh+1)}\|f\|_{L^p(x_0,x_1;X)}+\frac{|z|^k}{k!}\sum^{\infty}_{n=0}\frac{|z|^n}{n!}\|f\|_{L^p(x_0,x_1;X)} \\
        <&\,e^{|z|}\varepsilon_0+\frac{|z|^k(x_1-x_0)^{hk}}{k! \Gammamin}\sum^{\infty}_{n=0}\frac{\left(|z|(x_1-x_0)^h\right)^n}{n!}\|f\|_{L^p(x_0,x_1;X)}+\frac{|z|^k}{k!}\|f\|_{L^p(x_0,x_1;X)}e^{|z|} \\
        =&\,\frac{\varepsilon}{2}+\frac{|z|^k(x_1-x_0)^{hk}}{k!\Gammamin}e^{|z|(x_1-x_0)^h}\|f\|_{L^p(x_0,x_1;X)}+\frac{|z|^k}{k!}\|f\|_{L^p(x_0,x_1;X)}e^{|z|} \\
        =&\,\frac{\varepsilon}{2}+\frac{|z|^k}{k!}\|f\|_{L^p(x_0,x_1;X)}\left(e^{|z|}+\frac{(x_1-x_0)^{hk}e^{|z|(x_1-x_0)^h}}{\Gammamin}\right)
    \end{align*}

    If $x_1-x_0\geq1$, then because $hk<1$,

    \begin{align*}
        &\frac{|z|^k}{k!}\|f\|_{L^p(x_0,x_1;X)}e^{|z|}\left(1+\frac{(x_1-x_0)^{hk}e^{|z|(x_1-x_0)^h}}{\Gamma_m}\right) \\
        \leq&\,\frac{|z|^k}{k!}\|f\|_{L^p(x_0,x_1;X)}\left(e^{|z|}+\frac{(x_1-x_0)e^{|z|(x_1-x_0)}}{\Gamma_m}\right) \\
        \leq&\,\frac{|z|^k}{k!}\|f\|_{L^p(x_0,x_1;X)}e^{|z|(x_1-x_0)}(x_1-x_0)\left(1+\frac{1}{\Gamma_m}\right) \\
        <\frac{\varepsilon}{2}
    \end{align*}

    Otherwise,

    \begin{align*}
        &\frac{|z|^k}{k!}\|f\|_{L^p(x_0,x_1;X)}\left(e^{|z|}+\frac{(x_1-x_0)^{hk}e^{|z|(x_1-x_0)^h}}{\Gamma_m}\right) \\
        &\frac{|z|^k}{k!}\|f\|_{L^p(x_0,x_1;X)}\left(e^{|z|}+\frac{e^{|z|}}{\Gamma_m}\right) \\
        <\frac{\varepsilon}{2}
    \end{align*}

    In either case, we find

    $$\|\Phi(h, z)f-e^zf\|_{L^p(x_0,x_1;X)}<\frac{\varepsilon}{2}+\frac{\varepsilon}{2}=\varepsilon$$

    which concludes the proof.
\end{proof}

We can additionally consider $\Phi$ as taking an input from $(\alpha, z)\in\mathbb{R}_0\times\mathbb{C}_0$ as a vector equipped with the Euclidean norm, and consider properties w.r.t. these. It turns out that we can extend our prior result on strong continuity forward.

\begin{theorem}\label{vectorstrongcont}

    For $f\in L^p(x_0, x_1; X), (\alpha, z), \in\mathbb{R}_0\times\mathbb{C}_0$, for all $\varepsilon>0$, there exists $\delta>0$ such that
    $$(\alpha', z')\in\mathbb{R}_0\times\mathbb{C}_0, 0<\|(\alpha', z')-(\alpha, z)\|<\delta\Rightarrow \|\Phi(\alpha', z')f-\Phi(\alpha, z)f\|_{L^p(x_0,x_1;X)}<\varepsilon$$
\end{theorem}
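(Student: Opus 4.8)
The plan is to deduce joint continuity from the two one–variable continuity results already in hand, inserting the intermediate operator $\Phi(\alpha', z)$ so that one piece involves only a change in $\alpha$ and the other only a change in $z$:
$$\|\Phi(\alpha', z')f-\Phi(\alpha, z)f\|_{L^p(x_0,x_1;X)}\leq\|\Phi(\alpha', z')f-\Phi(\alpha', z)f\|_{L^p(x_0,x_1;X)}+\|\Phi(\alpha', z)f-\Phi(\alpha, z)f\|_{L^p(x_0,x_1;X)}.$$
The second summand is immediate: with $z$ and $f$ held fixed, \Cref{alphastrongcont} supplies a $\delta_2>0$ such that $|\alpha'-\alpha|<\delta_2$ forces it below $\varepsilon/2$. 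All the real work is in the first summand, which measures variation in $z$ but now based at the \emph{moving} parameter $\alpha'$, so what I must produce is an estimate that is \textbf{uniform} over all $\alpha'$ near $\alpha$.

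To get that uniformity I would exploit the analyticity from \Cref{generatoranalytic}: for each fixed $\alpha'$ the map $z\mapsto\Phi(\alpha', z)$ is entire (since ${_{x_0}J^{\alpha'}_{x}}$ is bounded) with derivative $\Phi(\alpha', z)\,{_{x_0}J^{\alpha'}_{x}}$. Writing the difference along the segment as $\Phi(\alpha', z')f-\Phi(\alpha', z)f=\int_0^1\frac{d}{dt}\Phi\bigl(\alpha', z+t(z'-z)\bigr)f\,dt$ and taking norms gives
$$\|\Phi(\alpha', z')f-\Phi(\alpha', z)f\|\leq|z'-z|\sup_{t\in[0,1]}\left\|\Phi\bigl(\alpha', z+t(z'-z)\bigr)\right\|\,\left\|{_{x_0}J^{\alpha'}_{x}}\right\|\,\|f\|.$$
I would then restrict to the window $\alpha'\in[\max\{0,\alpha-1\},\,\alpha+1]$ and $|z'-z|\leq1$. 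On this range \Cref{fracintbound} gives $\|{_{x_0}J^{\alpha'}_{x}}\|\leq(x_1-x_0)^{\alpha'}/\Gamma(\alpha'+1)$, which is continuous and hence bounded in $\alpha'$, while the bound $\|e^{\zeta A}\|\leq e^{|\zeta|\|A\|}$ from the proof of \Cref{generatoranalytic} controls $\|\Phi(\alpha', \zeta)\|$ for $|\zeta-z|\leq1$. Their product is therefore dominated by a single constant $C$ independent of $\alpha'$, so the first summand is at most $C|z'-z|\,\|f\|$ and a choice of $\delta_1>0$ drives it below $\varepsilon/2$. Setting $\delta=\min\{\delta_1,\delta_2,1\}$, the hypothesis $\|(\alpha', z')-(\alpha, z)\|<\delta$ controls $|\alpha'-\alpha|$ and $|z'-z|$ separately and finishes the estimate.

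The main obstacle is exactly the uniformity in $\alpha'$ just described: \Cref{alphastrongcont} only delivers continuity in $\alpha$ at a fixed $z$, so a careless split would leave a $z$-continuity modulus depending uncontrollably on the moving base point. The resolution is to route the $z$-variation through the exponential bounds, which are uniform over any bounded $\alpha'$-window — including the endpoint $\alpha=0$, where ${_{x_0}J^{0}_{x}}=I$ keeps the norm finite. This is precisely why I would split at $\Phi(\alpha', z)$ rather than at $\Phi(\alpha, z')$: it loads the delicate uniform estimate onto the $z$-direction, where the operators are genuinely Lipschitz with a computable constant, instead of onto the $\alpha$-direction, where only strong continuity is available.
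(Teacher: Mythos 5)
Your proof is correct, but it takes a genuinely different route from the paper's. Both arguments are a triangle-inequality split through an intermediate operator, yet you split at $\Phi(\alpha', z)$ while the paper splits at $\Phi(\alpha, z')$: the paper first handles the $\alpha$-variation at the \emph{moving} point $z'$ via the local Lipschitz bound of \Cref{alphalocalipschitz}, then the $z$-variation at the fixed $\alpha$ via analyticity. That choice places the burden of uniformity on the Lipschitz constant $M$, which in \Cref{alphalocalipschitz} depends on the base point $z'$; the paper leaves implicit that $M$ can be taken uniform for $z'$ near $z$ (true, since the constants $K$ and $L$ in that proof grow with $|z|$), and moreover, since \Cref{alphalocalipschitz} requires $\alpha>0$, the paper's argument as written does not cover the endpoint $\alpha=0$ which the statement includes. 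Your split avoids both issues: the $\alpha$-step sits at the fixed $z$, so \Cref{alphastrongcont} (valid for all $\alpha\geq0$) applies verbatim with no uniformity demanded of it, while the uniformity is loaded onto the $z$-step, where you make it explicit through the segment integral, the derivative formula $T'(z)=T(z)A$ from \Cref{generatoranalytic}, the bound $\|e^{\zeta A}\|\leq e^{|\zeta|\|A\|}$, and \Cref{fracintbound}, all uniform over the compact window $\alpha'\in[\max\{0,\alpha-1\},\,\alpha+1]$, $|z'-z|\leq1$ (including $\alpha'=0$, where $\|{_{x_0}J^{0}_{x}}\|=1$). The paper's proof is shorter; yours pays for an explicit quantitative estimate in $z$ but in exchange is complete at $\alpha=0$ and makes every uniformity explicit, so it is arguably the more solid of the two.
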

\begin{proof}
    By local Lipschitz continuity, we know that there exists $0<\delta'_\alpha$ such that if $|\alpha'-\alpha|<\delta'_\alpha$, then

    $$\|\Phi(\alpha', z')f-\Phi(\alpha, z')f\|_{L^p(x_0,x_1;X)}<M|\alpha'-\alpha|$$

    Now set $\delta_\alpha=\min\left\{\frac{\varepsilon}{2M}, \delta'_\alpha\right\}$.

    Furthermore, by the analytic properties of $\Phi$, we find for all $\varepsilon>0$, there exists $\delta_z>0$ such that
    $$\|\Phi(\alpha, z')f-\Phi(\alpha, z)f\|_{L^p(x_0,x_1;X)}<\frac{\varepsilon}{2}$$

    Next, set $\delta=\min\left\{\delta_\alpha, \delta_z\right\}$. We find that $|\alpha'-\alpha|, |z'-z|<\delta$. Then

    \begin{align*}
        &\|\Phi(\alpha', z')f-\Phi(\alpha, z)f\|_{L^p(x_0,x_1;X)}\\
        &\leq\|\Phi(\alpha', z')f-\Phi(\alpha, z')f\|_{L^p(x_0,x_1;X)}+\|\Phi(\alpha, z')f-\Phi(\alpha, z)f\|_{L^p(x_0,x_1;X)} \\
        &<\frac{\varepsilon}{2}+\frac{\varepsilon}{2}=\varepsilon \\
    \end{align*}

    Which concludes the proof.
\end{proof}

\newpage

\section{Discussion}

In this paper, we have defined the semigroup generated by the fractional integral operator $\Phi$ and determined some of its important properties, primarily through the use of the theory of one-parameter semigroups. In section 3, we were able to explicitly calculate a closed form expression of the resolvent of the fractional integral operator, which then paved the way for section 4, where we then determined the exponential bound of $\Phi$, noticing that it is a strongly continuous semigroup of type $(1,\omega)$. We then moved onto section 5, whereby we demonstrated that, by extending $t$ to complex numbers $z$, $\Phi$ is an analytic semigroup of angle $\frac{\pi}{2}$, which allowed us to then show the well-behavedness of $\Phi$ in the form of Lipschitz continuity and strong continuity, with respect to $\alpha$ as well as both $\alpha$ and $z$.

It turns out that there are several applications to our results, of which we will attempt to look into three.

Firstly, since it is clear that integer order integration is better-behaved than fractional order integration, we can utilise our results on the well-behavedness of $\Phi (\alpha, z)$, particularly \Cref{alphalocalipschitz}, to approximate the error produced when approximating $\Phi (\alpha, z)$ as $\Phi (n, z)$, whereby $n$ is the closest integer to $\alpha$.

Through the use of the complex exponential \textit{Fourier series} of a function $f$ in $L^2(x_0,x_1;X)$, which is well known to be convergent:
$$ f(x) = \sum_{k=-\infty}^\infty e^{\frac{2\pi ik(x-x_0)}{x_1-x_0}}u_k , \hspace{5mm} u_k \in X$$
we can see that it is straightforward to determine the result when the approximated operator $\Phi (n,z)$ is applied to $f$ by integrating each term $n$ times.

Another way we can apply our results is through \textit{Cauchy's integral formula} which (rather surprisingly) extends to operators as well, providing an alternative method to computing $f({_{x_0}J^\alpha_{x}})$ for holomorphic functions $f$:
$$f({_{x_0}J^\alpha_{x}}) = \frac{1}{2\pi i}\int_\Gamma f(\zeta)R(\zeta, {_{x_0}J^\alpha_{x}}) d\zeta$$
As we have determined a closed form expression for the resolvent of ${_{x_0}J^\alpha_{x}}$, it this method of computation may be easier than, perhaps, a direct computation of $f({_{x_0}J^\alpha_{x}})$.

Lastly, our results on the analyticity and continuity of $\Phi$ can also be used to show that solutions of PDEs involving ${_{x_0}J^\alpha_{x}}$, namely fractional PDEs and fractional-integro-PDEs, are well-behaved. For example, 

\begin{theorem}\emph{\cite[Theorem 12.44]{renardy2004}}\label{renardyode}
    Consider the inhomogeneous ODE  
    $$\dot{u}(t)=K{_{x_0}J^\alpha_{x_1}}u(t)+f(t), \hspace{5mm} u(0)=u_0$$

    Where $\Re(K)>0$. Then, $\dot{u}(t)$ is $\theta$-Hölder continuous on $(0,T]$, given that $_{x_0}J^\alpha_xu(t)$ is $\theta$-Hölder continuous on $[0,T]$, for any $0<\theta<1$.
\end{theorem}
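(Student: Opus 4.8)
The plan is to realize the right-hand-side operator $A\coloneqq K\,{_{x_0}J^\alpha_x}$ as the generator of an analytic semigroup, to represent the solution by the variation-of-constants (Duhamel) formula, and then to read off the Hölder regularity of $\dot u$ directly from the identity $\dot u=Au+f$. First I would observe that $A$ is a \emph{bounded} linear operator: by \Cref{fracintbound} the fractional integral is bounded on $L^p(x_0,x_1;X)$, and multiplication by the scalar $K$ preserves this. Since $\Re(K)>0$, \Cref{Kanalytic} together with \Cref{generatoranalytic} shows that $A$ generates the analytic semigroup $z\mapsto\Phi(\alpha,Kz)$ of angle $\tfrac{\pi}{2}$, so the homogeneous flow $\Phi(\alpha,Kt)u_0$ is well defined and norm-differentiable in $t$.

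Next I would write the unique solution of the abstract Cauchy problem as
$$u(t)=\Phi(\alpha,Kt)u_0+\int_0^t\Phi(\alpha,K(t-s))f(s)\,ds.$$
Because $A$ is bounded the semigroup is norm-continuous, so I can differentiate this expression in $t$, interchanging $\tfrac{\mathrm{d}}{\mathrm{d}t}$ with the Bochner integral by \Cref{leibnizintrule}; combining the result with the differentiation rules of \Cref{generatorprops} shows that $u$ is a genuine (classical) solution and that
$$\dot u(t)=A u(t)+f(t)=K\,{_{x_0}J^\alpha_x}u(t)+f(t)$$
holds in $X$ for every $t\in(0,T]$.

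With this identity the Hölder estimate is then immediate from the triangle inequality: for $s,t\in(0,T]$,
$$\|\dot u(t)-\dot u(s)\|_X\le|K|\,\bigl\|{_{x_0}J^\alpha_x}u(t)-{_{x_0}J^\alpha_x}u(s)\bigr\|_X+\|f(t)-f(s)\|_X.$$
The first term is controlled by the hypothesis that ${_{x_0}J^\alpha_x}u$ is $\theta$-Hölder on $[0,T]$, and the second by the $\theta$-Hölder regularity of the forcing term $f$ carried by the inhomogeneous-problem data of \cite{renardy2004}; since a finite linear combination of $\theta$-Hölder maps is again $\theta$-Hölder, $\dot u$ is $\theta$-Hölder on $(0,T]$, with seminorm bounded by $|K|$ times that of ${_{x_0}J^\alpha_x}u$ plus that of $f$.

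The genuine obstacle sits in the middle step—verifying that the Duhamel integral is differentiable and that $\dot u=Au+f$ holds classically rather than only in the mild sense. In the general theory underlying \cite{renardy2004} this is the technical heart, since for an unbounded generator one must tame the singular estimate $\|A\Phi(\alpha,K\tau)\|\le C\tau^{-1}$ by splitting the integral $\int_0^t A\Phi(\alpha,K(t-s))f(s)\,ds$ into a part with $f(s)$ replaced by $f(s)-f(t)$, whose singularity is absorbed by the Hölder modulus of $f$, plus the elementary remainder $\bigl(\Phi(\alpha,Kt)-I\bigr)f(t)$. In our setting this difficulty largely evaporates because $A={_{x_0}J^\alpha_x}$ scaled by $K$ is bounded, so $A\Phi(\alpha,K\tau)$ stays uniformly bounded on $[0,T]$ and no singular integral arises; the only real care is then the interchange of differentiation and Bochner integration, supplied by \Cref{leibnizintrule}.
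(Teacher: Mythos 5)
There is nothing in the paper to compare your argument against: \Cref{renardyode} appears in the Discussion purely as a quotation of \cite[Theorem 12.44]{renardy2004}, and the paper supplies no proof of it. Taken on its own terms, your proposal is correct and assembles the paper's machinery in the natural way: $K\,{_{x_0}J^\alpha_x}$ is bounded by \Cref{fracintbound}, hence generates the analytic semigroup $z\mapsto\Phi(\alpha,Kz)$ by \Cref{generatoranalytic} and \Cref{Kanalytic}; since the generator is bounded there is no singular estimate $\|A\Phi(\alpha,K\tau)\|\lesssim\tau^{-1}$ to tame, so the Duhamel integral can be differentiated directly and $\dot u=K\,{_{x_0}J^\alpha_x}u+f$ holds classically; the Hölder bound is then the triangle inequality. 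One simplification you could make: the theorem already presupposes that $u$ solves the ODE, so the identity $\dot u(t)=K\,{_{x_0}J^\alpha_x}u(t)+f(t)$ holds pointwise by assumption, and the conclusion follows at once from the hypothesis on ${_{x_0}J^\alpha_x}u$ plus the regularity of $f$; the variation-of-constants representation is needed only for existence, or if ``solution'' is meant in the mild sense.

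The one substantive issue is the point you half-buried in your final estimate: every correct proof requires $f$ itself to be $\theta$-Hölder continuous on $[0,T]$, and this hypothesis appears nowhere in the statement as printed. It cannot be dispensed with: since $f=\dot u-K\,{_{x_0}J^\alpha_x}u$, under the stated hypothesis the conclusion is \emph{equivalent} to $\theta$-Hölder continuity of $f$, so the statement is false for, say, continuous but nowhere-Hölder forcing. You were right to import this assumption from the ``inhomogeneous-problem data'' of \cite{renardy2004}, where it is an explicit hypothesis of Theorem 12.44, but it should be stated as an added assumption rather than treated as automatically available — the omission is a defect of the paper's statement, and your write-up should make the repair explicit rather than implicit.
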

\begin{remark}
    The above fractional-integro-PDE can be converted into the following fractional PDE\footnote{We convert it into this form because fractional PDEs tend to have more applications in modelling physical phenomena compared to fractional-integro-PDEs.}:
$$D^{1-\alpha}_{\text{RL}}\dot{u} = u(t) + k(t)$$
Where $D^{1-\alpha}_{\text{RL}}$ is the \emph{Riemann-Liouville fractional derivative}, defined as:
$$D^{\alpha}_{\text{RL}}f\coloneqq \frac{d^{\lceil \alpha \rceil}}{dx^{\lceil \alpha \rceil}}{_{x_0}J^{\lceil \alpha \rceil - \alpha}_{x}}f.$$
\end{remark}

%%%perhaps move Theorem 5.5 into Dicussion section?%%%

\newpage

\section{Appendix}\label{appendix}

\begin{theorem} \label{resolventprops} \emph{\cite{isem15}}
    Let $X$ be a Banach space and A be a \emph{closed linear operator}\footnote{A linear operator $A$ is closed if its domain $\dom{A}$ is complete with respect to the graph norm $\|f\|_A = \|f\| +\|Af\|$, for $f\in \dom{A}$.} with domain $\dom{A}\subseteq X$. Then each of the following hold:
\begin{enumerate}[label=(\roman*)]
    \item \textit{The resolvent set $\rho(A)$ is open, which implies that its complement $\sigma(A)$ is closed.}

    \item For $\lambda\in\rho(A)$, the mapping 
$$\lambda\longmapsto R(\lambda,A)$$ 
is \emph{complex differentiable} and $\forall n\in\mathbb{N}$,
$$\frac{\mathrm{d}^n}{\mathrm{d}\lambda^n}R(\lambda,A) = (-1)^nn!R(\lambda,A)^{n+1}$$
\end{enumerate}
\end{theorem}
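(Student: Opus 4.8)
The plan is to prove both parts at once by producing a local power-series representation of $\lambda\mapsto R(\lambda,A)$ around each point of the resolvent set. First I would fix $\lambda_0\in\rho(A)$, so that $R(\lambda_0,A)=(\lambda_0 I-A)^{-1}$ exists as a bounded operator, and write
$$\lambda I-A=(\lambda_0 I-A)+(\lambda-\lambda_0)I=(\lambda_0 I-A)\bigl[I+(\lambda-\lambda_0)R(\lambda_0,A)\bigr].$$
The key observation is that whenever $|\lambda-\lambda_0|<\|R(\lambda_0,A)\|^{-1}$ the bracketed factor is a perturbation of the identity of norm less than one, so by the standard Neumann (geometric) series for operators it is invertible with inverse $\sum_{k=0}^\infty(-(\lambda-\lambda_0))^k R(\lambda_0,A)^k$. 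Consequently $\lambda I-A$ is a composition of two boundedly invertible operators, hence $\lambda\in\rho(A)$, and
$$R(\lambda,A)=\sum_{k=0}^\infty(-1)^k(\lambda-\lambda_0)^k R(\lambda_0,A)^{k+1}.$$
This shows the open ball of radius $\|R(\lambda_0,A)\|^{-1}$ about $\lambda_0$ lies in $\rho(A)$, so $\rho(A)$ is open and $\sigma(A)$ is closed, settling (i).

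For (ii) I would read the differentiability directly off the series just obtained. Since $\mathscr{L}(X)$ is a Banach algebra, an operator-valued power series may be differentiated term by term within its radius of convergence, exactly as in the scalar case; this immediately gives that $\lambda\mapsto R(\lambda,A)$ is complex differentiable on $\rho(A)$. Taking the term-by-term $n$-th derivative of the series and evaluating at $\lambda=\lambda_0$ annihilates every term except the $k=n$ one and yields $\frac{\mathrm{d}^n}{\mathrm{d}\lambda^n}R(\lambda,A)\big|_{\lambda=\lambda_0}=(-1)^n n!\,R(\lambda_0,A)^{n+1}$; as $\lambda_0\in\rho(A)$ was arbitrary, this is precisely the claimed formula.

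As a cross-check and an alternative route to the derivative formula, I would derive the first resolvent identity $R(\lambda,A)-R(\mu,A)=(\mu-\lambda)R(\lambda,A)R(\mu,A)$ by inserting $(\mu I-A)$ and $(\lambda I-A)$ appropriately, divide by $\lambda-\mu$, and let $\mu\to\lambda$ using the continuity guaranteed by the power series to obtain $\frac{\mathrm{d}}{\mathrm{d}\lambda}R(\lambda,A)=-R(\lambda,A)^2$; the general case then follows by induction, using that all resolvents commute (again by the resolvent identity) so that $\frac{\mathrm{d}}{\mathrm{d}\lambda}R(\lambda,A)^{n+1}=-(n+1)R(\lambda,A)^{n+2}$. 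The only genuinely delicate point, and the one I would be most careful about, is justifying the operator-valued analysis — convergence of the Neumann series in operator norm and the legitimacy of term-by-term differentiation — but these are exactly the Banach-algebra analogues of the familiar scalar facts, so no new difficulty arises beyond careful bookkeeping.
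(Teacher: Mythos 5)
Your proposal cannot be measured against an internal argument, because the paper does not actually prove this theorem: it is stated in the appendix as a quoted result from \cite{isem15} and then used as an ingredient in \Cref{resolventorderintegral} and the Hille--Yosida proof. Judged on its own merits, your Neumann-series argument is the standard proof and is essentially correct, including the term-by-term differentiation for part (ii) and the alternative route via the resolvent identity. Two points deserve tightening. First, you assert at the outset that $R(\lambda_0,A)$ ``exists as a bounded operator''; under the paper's definition of the spectrum (mere bijectivity of $\lambda_0 I-A\colon\dom{A}\to X$), boundedness of the algebraic inverse is not part of the definition but follows from the closed graph theorem, and this is the \emph{only} place where the hypothesis that $A$ is closed enters --- for a non-closed operator the inverse can be unbounded and the whole Neumann argument (hence the openness of $\rho(A)$) can fail, so this step should be justified explicitly. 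Second, your factorization $\lambda I-A=(\lambda_0 I-A)\bigl[I+(\lambda-\lambda_0)R(\lambda_0,A)\bigr]$ applies the bracketed factor $B\coloneqq I+(\lambda-\lambda_0)R(\lambda_0,A)$ first, so to conclude bijectivity of the composition $\dom{A}\to X$ you need $B$ to map $\dom{A}$ bijectively onto itself; this is true (since $\ran R(\lambda_0,A)\subseteq\dom{A}$, and the identity $B^{-1}g=g-(\lambda-\lambda_0)R(\lambda_0,A)B^{-1}g$ shows $B^{-1}$ also preserves $\dom{A}$), but it must be said --- or sidestepped entirely by factoring on the other side, $\lambda I-A=\bigl[I+(\lambda-\lambda_0)R(\lambda_0,A)\bigr](\lambda_0 I-A)$, which composes a bijection $\dom{A}\to X$ with a bijection $X\to X$ and yields the same series $R(\lambda,A)=\sum_{k=0}^\infty(-1)^k(\lambda-\lambda_0)^kR(\lambda_0,A)^{k+1}$. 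With these repairs both conclusions, the openness of $\rho(A)$ and the formula $\frac{\mathrm{d}^n}{\mathrm{d}\lambda^n}R(\lambda,A)=(-1)^n n!\,R(\lambda,A)^{n+1}$, are sound.
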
 

\begin{theorem} \label{integralresolvent} \emph{\cite{isem15}}
    Let $T$ be a uniformly continuous semigroup with the generator $A$. Then $\forall \lambda\in\mathbb{C}$ and $t<0$,
$$e^{-\lambda t}T(t)f - f = (A - \lambda I)\int_0^te^{-\lambda s}T(s)f\hspace{1mm}ds$$
\end{theorem}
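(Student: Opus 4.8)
The plan is to recognise the right-hand integrand as essentially the antiderivative of the $X$-valued map $g(s)\coloneqq e^{-\lambda s}T(s)f$, and then combine the fundamental theorem of calculus with the boundedness of the generator. Throughout I take $t\geq 0$, so that $[0,t]$ lies in the domain of the semigroup.

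First I would differentiate $g$ by the product rule, treating $e^{-\lambda s}$ as a scalar factor against the $X$-valued function $T(s)f$. Since $T$ is uniformly continuous, \Cref{uniformsemigroup} guarantees that its generator $A$ is a bounded linear operator with $\dom{A}=X$; in particular every $f\in X$ lies in $\dom{A}$, so \Cref{generatorprops}(iii) gives $\frac{d}{ds}T(s)f=AT(s)f$. Combining these yields
$$g'(s) = -\lambda e^{-\lambda s}T(s)f + e^{-\lambda s}AT(s)f = (A-\lambda I)\,e^{-\lambda s}T(s)f = (A-\lambda I)g(s),$$
where linearity of $A$ lets the scalar $e^{-\lambda s}$ pass through the operator. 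Next I would integrate this identity over $[0,t]$. Because $A$ is bounded and $T$ is strongly continuous, the map $s\mapsto g'(s)$ is continuous in the norm of $X$, so $g$ is continuously differentiable and the Bochner form of the fundamental theorem of calculus gives $\int_0^t g'(s)\,ds = g(t)-g(0) = e^{-\lambda t}T(t)f - f$, using $T(0)=I$. It then remains to rewrite $\int_0^t (A-\lambda I)g(s)\,ds$ as $(A-\lambda I)\int_0^t g(s)\,ds$.

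The main obstacle — though a mild one in this bounded setting — is justifying this final interchange of the operator $A-\lambda I$ with the Bochner integral. This rests on the standard fact that a bounded linear operator commutes with Bochner integration: for bounded $B$ and Bochner-integrable $\phi$ one has $B\int\phi = \int B\phi$. Since $A$ is bounded (again by \Cref{uniformsemigroup}) and $\lambda I$ is trivially bounded, $A-\lambda I$ is bounded and the interchange is legitimate, which closes the identity. An alternative route would build directly on \Cref{generatorprops}(ii), but threading the exponential weight $e^{-\lambda s}$ through that statement is far less transparent than the direct differentiation above, so I would favour the product-rule argument.
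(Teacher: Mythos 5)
Your proof is correct, but it takes a genuinely different route from the paper's. The paper's proof is a two-line reduction: by \Cref{uniformsemigroup}, $e^{-\lambda t}T(t) = e^{-\lambda t}e^{At} = e^{(A-\lambda I)t}$ is itself a uniformly continuous semigroup whose generator is $A-\lambda I$, and applying \Cref{generatorprops}(ii) to this rescaled semigroup yields the identity immediately. This is precisely the ``build on \Cref{generatorprops}(ii)'' route you set aside as less transparent --- the point you missed is that the rescaling trick makes threading the weight $e^{-\lambda s}$ through that statement automatic rather than awkward, since the weight gets absorbed into the new semigroup. Your argument instead differentiates $g(s)=e^{-\lambda s}T(s)f$ by the product rule and \Cref{generatorprops}(iii) (legitimate since $\dom{A}=X$ for a bounded generator), integrates using the Bochner fundamental theorem of calculus, and pulls the bounded operator $A-\lambda I$ out of the integral. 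Both arguments are sound. The paper's is shorter and needs no interchange lemma, but it silently relies on identifying the generator of the rescaled semigroup; your computation effectively proves that identification by hand, is more self-contained, and makes explicit where boundedness of $A$ is actually used --- in the continuity of $g'$, and in commuting $A-\lambda I$ past the Bochner integral. One small point in your favour: the statement's ``$t<0$'' is evidently a typo for $t\geq 0$, which you correctly normalise at the outset, whereas the paper never addresses it.
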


\begin{proof}
It can easily be shown that $e^{-\lambda t}T(t)$ is also a uniformly continuous semigroup with the generator $(A-\lambda I)$ as, by \Cref{uniformsemigroup}:
$$e^{-\lambda t}T(t) = e^{-\lambda t}e^{At} = e^{(A-\lambda I)t}$$
Thus, by part (ii) of \Cref{generatorprops},
$$e^{-\lambda t}T(t)f - f = (A - \lambda I)\int_0^te^{-\lambda s}T(s)f\hspace{1mm}ds$$
\end{proof}

\begin{theorem} \label{resolventorderintegral} \emph{\cite{isem15}}
Let $T$ be a uniformly continuous semigroup of type $(M,\omega)$ with the infinitesimal generator A. Then the following hold, given $\Re(\lambda)>\omega$:

\begin{enumerate}[label=(\roman*)]
    \item  $\forall f\in X$ and $\lambda\in\mathbb{C}$,
$$R(\lambda,A)f = \int_0^\infty e^{-\lambda s}T(s)f\hspace{1mm}ds$$

    \item $\forall f\in X$, $\lambda\in\mathbb{C}$ and $n\in\mathbb{N}$,
$$R(\lambda,A)^nf = \frac{1}{(n-1)!}\int_0^\infty s^{n-1}e^{-\lambda s}T(s)f\,ds$$
\end{enumerate}
\end{theorem}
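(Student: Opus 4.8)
The plan is to establish part (i) directly from the preceding result (Theorem \ref{integralresolvent}) by passing to a limit, and then to obtain part (ii) by differentiating the resulting integral representation with respect to $\lambda$. Before anything else, I would check that the improper Bochner integral $\int_0^\infty e^{-\lambda s}T(s)f\,ds$ actually converges. Since $T$ is of type $(M,\omega)$, Theorem \ref{semigroupbound} gives $\|T(s)\|\leq Me^{\omega s}$, so the integrand is bounded in norm by $Me^{(\omega-\Re(\lambda))s}\|f\|$. Because $\Re(\lambda)>\omega$, this majorant is integrable on $[0,\infty)$, and Lemma \ref{bochnerprops}(ii) guarantees the integral is well-defined.

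For part (i), I would invoke Theorem \ref{integralresolvent}, which states that for every finite $t$,
$$e^{-\lambda t}T(t)f - f = (A-\lambda I)\int_0^t e^{-\lambda s}T(s)f\,ds.$$
Letting $t\to\infty$, the term $e^{-\lambda t}T(t)f$ vanishes in norm by the same exponential estimate ($\|e^{-\lambda t}T(t)f\|\leq Me^{(\omega-\Re(\lambda))t}\|f\|\to 0$), while the finite integrals converge to $\int_0^\infty e^{-\lambda s}T(s)f\,ds$. Since $A$, and hence $A-\lambda I$, is closed, I can pass the operator through this limit to obtain
$$-f = (A-\lambda I)\int_0^\infty e^{-\lambda s}T(s)f\,ds.$$
Rearranging gives $(\lambda I - A)\int_0^\infty e^{-\lambda s}T(s)f\,ds = f$, and applying the resolvent $R(\lambda,A)=(\lambda I-A)^{-1}$ to both sides yields the claimed formula.

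For part (ii), the idea is to differentiate the formula from part (i) repeatedly in $\lambda$. Differentiating under the integral sign should give $\tfrac{d^{n-1}}{d\lambda^{n-1}}R(\lambda,A)f = (-1)^{n-1}\int_0^\infty s^{n-1}e^{-\lambda s}T(s)f\,ds$ by an easy induction on the exponent introduced by each differentiation. On the other hand, Theorem \ref{resolventprops}(ii) already records that $\tfrac{d^{n-1}}{d\lambda^{n-1}}R(\lambda,A) = (-1)^{n-1}(n-1)!\,R(\lambda,A)^n$. Equating the two expressions and cancelling the common sign and the factorial $(n-1)!$ produces exactly the stated identity.

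The hard part will be rigorously justifying the differentiation under the integral sign for this Banach-space-valued integral. I would handle it by dominating the differentiated integrands uniformly on a small neighbourhood of $\lambda$ inside $\{\Re(\lambda)>\omega\}$: for $\Re(\mu)\geq\Re(\lambda)-\epsilon$ with $\epsilon$ chosen so that $\Re(\lambda)-\epsilon>\omega$, the integrand $s^{n-1}e^{-\mu s}T(s)f$ is bounded in norm by $M s^{n-1}e^{(\omega-\Re(\lambda)+\epsilon)s}\|f\|$, which is integrable on $[0,\infty)$. This domination licenses passing the $\lambda$-derivative inside the integral via the Leibniz rule (Theorem \ref{leibnizintrule}) together with a dominated-convergence argument, closing the gap in both the base case and the inductive step.
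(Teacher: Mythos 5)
Your proposal is correct and follows essentially the same route as the paper: part (i) by taking $t\to\infty$ in Theorem \ref{integralresolvent} and killing the boundary term with the type-$(M,\omega)$ estimate, and part (ii) by combining the derivative formula of Theorem \ref{resolventprops}(ii) with differentiation under the integral sign of the Laplace-transform representation. Your additional care (checking convergence of the improper integral, invoking closedness of $A-\lambda I$ to pass it through the limit, and dominating the differentiated integrands) only tightens steps the paper leaves implicit.
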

\begin{proof}
From \Cref{integralresolvent}, we have that:
$$e^{-\lambda t}T(t)f - f = (A-\lambda I)\int_0^te^{-\lambda s}T(s)f\hspace{1mm}ds$$
$$\therefore \lim_{t\to\infty}(e^{-\lambda t}T(t)f - f) = (A-\lambda I)\int_0^\infty e^{-\lambda s}T(s)f\hspace{1mm}ds$$
Since it is given that $\lambda>\omega$, the first term in the limit tends to 0 as $t\to\infty$ and we have,
$$-f = (A-\lambda I)\int_0^\infty e^{-\lambda s}T(s)f\hspace{1mm}ds$$
$$\therefore f = (\lambda I - A)\int_0^\infty e^{-\lambda s}T(s)f\hspace{1mm}ds$$
\begin{flalign*}
\Rightarrow R(\lambda,A)f &= (\lambda-A)^{-1}(\lambda-A)\int_0^\infty e^{-\lambda s}T(s)f\hspace{1mm}ds\\
                          &= \int_0^\infty e^{-\lambda s}T(s)f\hspace{1mm}ds
\end{flalign*}
Thus, (i) is proved. Now, notice that, by rearranging the identity in part (ii) of \Cref{resolventprops} and by \Cref{leibnizintrule}, as well as part (i) of this theorem, we have
\begin{flalign*}
R(\lambda,A)^nf &= \frac{(-1)^{n-1}}{(n-1)!}\frac{\mathrm{d}^{n-1}}{\mathrm{d}\lambda^{n-1}}R(\lambda,A)f\\
                &=\frac{(-1)^{n-1}}{(n-1)!}\int_0^\infty\frac{\partial^{n-1}}{\partial\lambda^{n-1}}(e^{-\lambda s}T(s)f)\hspace{1mm}ds\\
                &=\frac{(-1)^{n-1}}{(n-1)!}\int_0^\infty(-1)^{n-1}s^{n-1}e^{-\lambda s}T(s)f\hspace{1mm}ds\\
                &=\frac{1}{(n-1)!}\int_0^\infty s^{n-1}e^{-\lambda s}T(s)f\hspace{1mm}ds
\end{flalign*}
Hence, we have shown that part (ii) holds. 
\end{proof}

\begin{theorem} \label{hilleyosidaproof} \emph{\cite{isem15, rudin1991}} \emph{(Hille-Yosida Theorem)} Let $T:[0, \infty)\to\mathscr{L}(X)$ be a strongly continuous semigroup with generator $A$ whose domain is dense in $X$. Then,

$$\|T(t)\|\leq Me^{\omega t} \iff \|R(\lambda, A)^n\|\leq\frac{M}{(\lambda-\omega)^n}$$
\end{theorem}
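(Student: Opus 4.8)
The plan is to prove the two implications separately, with the forward direction being an immediate consequence of the appendix results and the converse requiring the Yosida approximation.

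For the forward implication, suppose $\|T(t)\|\leq Me^{\omega t}$ and fix real $\lambda>\omega$. Part (ii) of \Cref{resolventorderintegral} supplies the integral representation $R(\lambda,A)^n f=\frac{1}{(n-1)!}\int_0^\infty s^{n-1}e^{-\lambda s}T(s)f\,ds$. Taking norms inside the Bochner integral and inserting the hypothesis yields $\|R(\lambda,A)^n f\|\leq\frac{M\|f\|}{(n-1)!}\int_0^\infty s^{n-1}e^{-(\lambda-\omega)s}\,ds$, and the elementary Gamma evaluation $\int_0^\infty s^{n-1}e^{-as}\,ds=(n-1)!/a^n$ with $a=\lambda-\omega>0$ collapses this to $\frac{M\|f\|}{(\lambda-\omega)^n}$. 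Taking the supremum over $\|f\|\leq1$ gives the resolvent bound, so this direction is essentially free.

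The converse is the substantial part. Given the resolvent bound, I would introduce the Yosida approximants $A_\lambda:=\lambda A R(\lambda,A)=\lambda^2 R(\lambda,A)-\lambda I$ for $\lambda>\omega$; each is bounded, so by \Cref{uniformsemigroup} it generates the uniformly continuous semigroup $e^{tA_\lambda}$. Factoring $e^{tA_\lambda}=e^{-\lambda t}e^{t\lambda^2 R(\lambda,A)}$, expanding the second factor as a power series, and applying the hypothesis term by term gives $\|e^{tA_\lambda}\|\leq M e^{-\lambda t}\exp\!\left(t\lambda^2/(\lambda-\omega)\right)=M\exp\!\left(t\lambda\omega/(\lambda-\omega)\right)$, a quantity that tends to $Me^{\omega t}$ as $\lambda\to\infty$. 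The bound on $T$ would then follow by passing to the limit, \emph{provided} $e^{tA_\lambda}f\to T(t)f$ strongly.

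Establishing this strong convergence is the main obstacle. The plan is: first show $\lambda R(\lambda,A)f\to f$ for every $f\in X$ — this holds on $\dom{A}$ by a direct resolvent estimate and extends to all of $X$ using density of $\dom{A}$ together with the uniform bound $\|\lambda R(\lambda,A)\|\leq M\lambda/(\lambda-\omega)$ — so that $A_\lambda f=\lambda R(\lambda,A)Af\to Af$ on $\dom{A}$. Since the $A_\lambda$ are functions of commuting resolvents their exponentials commute, and the telescoping identity $e^{tA_\lambda}f-e^{tA_\mu}f=\int_0^t e^{(t-s)A_\mu}e^{sA_\lambda}(A_\lambda-A_\mu)f\,ds$, combined with the uniform exponential bound, shows $(e^{tA_\lambda}f)_\lambda$ is Cauchy uniformly on compact $t$-intervals for $f\in\dom{A}$; density then extends convergence to all of $X$, yielding a limit semigroup $S(t)$. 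One identifies $S=T$ by verifying $S$ is strongly continuous with generator $A$ and invoking uniqueness of the semigroup attached to a generator. Once convergence is secured, $\|T(t)f\|=\lim_{\lambda\to\infty}\|e^{tA_\lambda}f\|\leq Me^{\omega t}\|f\|$ for each $f$, which is the desired bound. The delicate points are the upgrade of $\lambda R(\lambda,A)f\to f$ from $\dom{A}$ to all of $X$ and the identification of the limit with $T$ rather than merely with some semigroup sharing the generator $A$.
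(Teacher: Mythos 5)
Your proposal is correct and follows essentially the same route as the paper: the forward direction is identical (the integral representation of $R(\lambda,A)^n$ followed by the Gamma/Laplace evaluation), and your Yosida approximants $A_\lambda=\lambda^2R(\lambda,A)-\lambda I$ are exactly the paper's $AS(\varepsilon)=(S(\varepsilon)-I)/\varepsilon$ with $S(\varepsilon)=(I-\varepsilon A)^{-1}$ and $\lambda=1/\varepsilon$, yielding the same uniform bound $M\exp\left(t\lambda\omega/(\lambda-\omega)\right)$ and the same density-plus-convergence scheme. The only differences are technical devices within the same skeleton: you get the Cauchy property on $\dom{A}$ from the telescoping integral identity where the paper differentiates in $\varepsilon$ and applies the mean value inequality, and you identify the limit semigroup with $T$ by uniqueness of the semigroup attached to a generator where the paper does so via the Laplace-transform representation of the resolvent.
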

\begin{proof}
We first show the forward direction. We can apply part (ii) of \Cref{resolventorderintegral} to form the following inequality:
\begin{flalign*}
\|R(\lambda,A)^nf\| &\leq \frac{1}{(n-1)!}\int_0^\infty s^{n-1}e^{-\lambda s}Me^{\omega s}\|f\|\hspace{1mm}ds\\
                    &\leq \frac{M\|f\|}{(n-1)!}\int_0^\infty s^{n-1}e^{(\omega-\lambda) s}\hspace{1mm}ds
\end{flalign*}
Now notice that the integral on right hand side of the last inequality is actually the \emph{Laplace transform} of $s^{n-1}$, given by
$$\mathcal{L}\{s^{n-1}\}(\lambda-\omega) = \frac{(n-1)!}{(\lambda-\omega)^n}$$
By substituting this into our inequality, we obtain,
$$\|R(\lambda,A)^nf\|\leq \frac{M}{(\lambda - \omega)^n}\|f\|$$
$$\Rightarrow \|R(\lambda,A)^n\|\leq \frac{M}{(\lambda - \omega)^n}$$
which concludes the forward direction. We now prove the backward direction.

Let $S(\varepsilon)=(1-\varepsilon A)^{-1}$. Our condition becomes

$$\|S(\varepsilon)^n\|\leq M(I-\varepsilon \omega)^{-n}, 0<\varepsilon<\varepsilon_0=\frac{1}{\omega}$$

Furthermore, $S(\varepsilon)(I-\varepsilon A)f=f$ when $f\in \dom{A}$. Expanding out, this tells us $f-S(\varepsilon)f=-\varepsilon S(\varepsilon)Af$. As $\varepsilon\to 0$, the norm of the RHS approaches $0$, i.e.

$$\lim_{\varepsilon\to0}S(\varepsilon) f=f$$

Because $\dom{A}$ is dense in $X$, for any $f$ we can find a sequence $(f_n)$ where $f_n\to f$ and each $f_n\in \dom{A}$. Because $S(\varepsilon)-I$ has norm bounded by $M(1-\varepsilon_0 A)^{-1}+1$, and $\lim_{\varepsilon\to0}S(\varepsilon)f_n=f_n$, we conclude 
\begin{align*}
    \lim_{\varepsilon\to0}\|S(\varepsilon)f-f\|&\leq\lim_{\varepsilon\to0}\|(S(\varepsilon)-I)f_n\|-\|(S(\varepsilon)-I)(f-f_n)\| \\
    &\leq0+\left(1+M(1-\varepsilon_0\omega)^{-1}\right)(f-f_n) \\
    &\stackrel{f_n\to f}{=}0
\end{align*} for all $f\in X$.

Now let

$$T(t,\varepsilon)=e^{tAS(\varepsilon)}$$

Given that $\varepsilon AS(\varepsilon)=S(\varepsilon)-I$, we know

$$T(t,\varepsilon)=\exp\left(t\frac{S(\varepsilon)-I}{\varepsilon}\right)=\exp\left(-\frac{t}{\varepsilon}\right)\sum^\infty_{n=0}\frac{t^n}{n!\varepsilon^n}S(\varepsilon)^n$$

Then,

\begin{align*}
    \|T(t,\varepsilon)\|&\leq \exp\left(-\frac{t}{\varepsilon}\right)\sum^\infty_{n=0}\frac{t^n}{n!\varepsilon^n}\|S(\varepsilon)^n\| \\
    &=\exp\left(-\frac{t}{\varepsilon}\right)\sum^\infty_{n=0}\frac{M}{n!}\left(\frac{t}{\varepsilon(1-\varepsilon\omega)}\right)^n\\
    &=M\exp\left(\frac{t}{\varepsilon}\left(\frac{1}{1-\varepsilon\omega}-1\right)\right)\\
    &=M\exp\left(\frac{t}{\varepsilon}\frac{\varepsilon\omega}{1-\varepsilon\omega}\right)\\
    &=M\exp\left(\frac{t\omega}{1-\varepsilon\omega}\right)
\end{align*}

Furthermore, we know $S(\varepsilon)$ and $A$ commute for $f\in\dom{A}$ because $(I-\varepsilon A)S(\varepsilon)f=S(\varepsilon)(I-\varepsilon A)f$. Thus, for $\varepsilon>0$ and $f\in\dom{A}$

\begin{align*}
    \frac{\partial}{\partial \varepsilon} T(t, \varepsilon) f & =\frac{\partial}{\partial \varepsilon}\left[t A(I-\varepsilon A)^{-1}\right] T(t, \varepsilon) f \\
    & =-t A(I-\varepsilon A)^{-2} \cdot-A(t, \varepsilon) f \\
    & =t(A S(\varepsilon))^2 T(t, \varepsilon) f \\
    & =t T(t, \varepsilon) \frac{(S(\varepsilon)-I)^2}{\varepsilon^2} f \\
    \left\|\frac{\partial}{\partial \varepsilon} T(t, \varepsilon) f\right\| & \leq t M \exp \left(\frac{t \omega}{1-\varepsilon \omega}\right) \frac{1}{\varepsilon}\left\|\frac{(S(\varepsilon)-I)^2}{\varepsilon} f\right\|
\end{align*}

By Mean Value Inequality, 

\begin{align*}
    \left\|[T(t,\varepsilon+h)-T(t,\varepsilon)]f\right\|&\leq\sup_{\varepsilon'\in(\varepsilon,\varepsilon+h)}h\left.\left\|\frac{\partial}{\partial\delta}T(t,\delta)f\right\|\right|_{\delta=\varepsilon'} \\
    &\leq tM\exp\left(\frac{t\omega}{1-h\omega}\right)\sup_{\varepsilon'\in(\varepsilon,\varepsilon+h)}\frac{h}{\varepsilon'}\left\|(S(\varepsilon')-I)\frac{S(\varepsilon')-I}{\varepsilon'}f\right\| \\
    &<tM\exp\left(\frac{t\omega}{1-h\omega}\right)\sup_{\varepsilon'\in(\varepsilon,\varepsilon+h)}\left\|(S(\varepsilon')-I)\frac{S(\varepsilon')-I}{\varepsilon'}f\right\| \\
\end{align*}

First, take $\varepsilon\to0\Rightarrow\varepsilon'\in(0, h)$. Next, take $h\to0$.

\begin{align*}
    &\lim_{h\to0}\sup_{\varepsilon'\in(0,h)}\left\|(S(\varepsilon')-I)\frac{S(\varepsilon')-I}{\varepsilon'}f\right\| \\
    =&\left\|\limsup_{\varepsilon'\to0}(S(\varepsilon')-I)\frac{\varepsilon'AS(\varepsilon')}{\varepsilon'}f\right\| \\
    =&\left\|\limsup_{\varepsilon'\to0}(S(\varepsilon')-I)S(\varepsilon')Af\right\|=0 \\
\end{align*}

Hence the above limit tends to $0$ and $Q(t)\coloneqq\lim_{\varepsilon\to0}T(t,\varepsilon)f$ is well-defined for $f\in\dom{A}$.

Next, note

$$\|Q(t)\|=\lim_{\varepsilon\to0}\|T(t,\varepsilon)\|=\lim_{\varepsilon\to0}\exp\left(\frac{t\omega}{1-\varepsilon\omega}\right)=\exp(t\omega)$$

Hence $Q(t)$ is bounded. For a sequence $(f_n)\to f$ with each individual term in $\dom{A}$, $(Af_n)$ is a Cauchy sequence. Hence, we can extend the definition of $Q(t)$ to all $f\in X$.

Furthermore, because of the additive and topological properties of $T(t,\varepsilon)$, we can also conclude that $Q(t)$ is a semigroup.

Finally, we show $Q(t)$ is a semigroup generated by $A$.

Let $\widetilde{A}$ be the generator of $Q$. By part (i) of \Cref{resolventorderintegral},

$$(\lambda I-\widetilde{A})^{-1}f=\int^\infty_0 e^{-\lambda s}Q(s)f\,ds$$

However, $AS(\varepsilon)$ generates $T(t,\varepsilon)$ so we also have

$$(\lambda I-AS(\varepsilon))^{-1}f=\int^\infty_0 e^{-\lambda s}T(s,\varepsilon)f\,ds$$

Taking $\varepsilon\to 0$, because of the strong convergence of $S(\varepsilon)\to I$, we conclude

$$(\lambda I-A)^{-1}f=\int^\infty_0 e^{-\lambda s}Q(s)f\,ds$$

Hence $A=\widetilde{A}$, completing the proof.

\end{proof}

\newpage

\section{Acknowledgements}
We would like to express our appreciation to Mr. James Coe for his help in providing direction in our research, and Mr. Thomas Johnson for his help in acting as supervisor to our project.

\printbibliography

\href{https://www.wolframalpha.com/}{Wolfram Alpha} and \href{https://www.integral-calculator.com/}{Integral Calculator} were used in the calculations of various integrals and derivatives.

\end{document}